\newcommand{\PP}{\mathbb{P}}
\newcommand{\kernel}{\textnormal{ker}\,}
\newcommand{\Hom}{\textnormal{Hom}}
\newcommand{\Ext}{\textnormal{Ext}}
\newcommand{\arcot}{\textnormal{arcot}}
\newcommand{\Bc}{\mathcal{B}}
\newcommand{\Fc}{\mathcal{F}}
\newcommand{\Tc}{\mathcal{T}}
\newcommand{\Coh}{\mathrm{Coh}}
\newcommand{\arinj}{\ar@{^{(}->}}
\newcommand{\arsurj}{\ar@{->>}}
\newcommand{\areq}{\ar@{=}}
\newcommand{\ch}{\mathrm{ch}}
\newcommand{\Aut}{\mathrm{Aut}}
\newcommand{\oo}{{\overline{\omega}}}
\newcommand{\olB}{{\overline{B}}}
\newcommand{\bsm}{\begin{smallmatrix}}
\newcommand{\esm}{\end{smallmatrix}}
\newcommand{\mC}{\mathcal{C}}
\newcommand{\A}{\mathcal{A}}
\newcommand{\B}{\mathcal{B}}
\newcommand{\T}{\mathcal{T}}
\newcommand{\F}{\mathcal{F}}
\newcommand{\RHom}{\textnormal{RHom}}
\newcommand{\HN}{\text{HN}}
\newcommand{\cP}{\mathcal{P}}
\newcommand{\cO}{\mathcal{O}}
\newcommand{\oV}{\overline{V}}
\newcommand{\NS}{\text{NS}}
\newcommand{\im}{\text{im}}
\newcommand{\ST}{\text{ST}}
\newcommand{\STt}{\text{ST}_{\cO_C(t)}}
\newcommand{\STk}{\text{ST}_{\cO_C(-1)}}
\newcommand{\STCt}{\text{ST}_{\cO_{C_\nu}(t)}}
\newcommand{\Cone}{\text{Cone}}
\newtheorem*{rep@theorem}{\rep@title}
\newcommand{\newreptheorem}[2]{%
\newenvironment{rep#1}[1]{%
 \def\rep@title{#2 \ref{##1}}%
 \begin{rep@theorem}}%
 {\end{rep@theorem}}}
\newcommand\reallywidehat[1]{%
\savestack{\tmpbox}{\stretchto{%
  \scaleto{%
    \scalerel*[\widthof{\ensuremath{#1}}]{\kern-.6pt\bigwedge\kern-.6pt}%
    {\rule[-\textheight/2]{1ex}{\textheight}}
  }{\textheight}%
}{0.5ex}}%
\stackon[1pt]{#1}{\tmpbox}%
}
\begin{document}

\title[]{Bridgeland/weak Stability Conditions under Spherical Twist Associated to a Torsion Sheaf}

\author[Tristan C. Collins]{Tristan C. Collins}
\address{Department of Mathematics \\
University of Toronto\\
40 St. George St.\\
 Toronto, ON \\
CA}
\email{tristanc@math.toronto.edu}

\author[Jason Lo]{Jason Lo}
\address{Department of Mathematics \\
California State University, Northridge\\
18111 Nordhoff Street\\
Northridge CA 91330 \\
USA}
\email{jason.lo@csun.edu}

\author[Yun Shi]{Yun Shi}
\address{Department of Mathematics \\
Trinity College\\
300 Summit Street\\
Hartford, CT, 06106\\
USA}
\email{yun.shi@trincoll.edu}

\author[Shing-Tung Yau]{Shing-Tung Yau}
\address{Yau Mathematical Sciences Center\\
Tsinghua University\\
Haidian District, Beijing\\
China}
\email{styau@tsinghua.edu.cn}


\begin{abstract}
In this paper, we study the action of an autoequivalence, the spherical twist associated to a torsion sheaf, on the standard Bridgeland stability conditions and a generalized weak stability condition on the derived category of a K3 surface. As a special case, we construct a Bridgeland stability condition associated to a non-nef divisor, which is outside  the geometric chamber, but conjecturally lies in the geometric component of the stability manifold by a conjecture of Bridgeland. We also briefly discuss the stability of certain line bundles at the weak stability condition associated to a nef divisor. 
\end{abstract}

\maketitle
\tableofcontents

\section{Introduction}
Bridgeland stability conditions, introduced by Bridgeland in \cite{StabTC}, provide a framework for defining stability conditions in triangulated categories. When the triangulated category is $D^b(X)$, the derived category of coherent sheaves on a smooth projective variety, Bridgeland stability conditions associated to an ample divisor have been constructed and extensively studied by many authors (e.g.\ \cite{SCK3, arcara2013minimal, BMT1} etc). A Bridgeland stability condition on $D^b(X)$ can be equivalently defined as a pair $(Z, \A)$, where  $\A$ is the heart of a bounded t-structure on $D^b(X)$, and $Z$ is a group homomorphism from the Grothendieck group of $\A$ to $\mathbb{H}$. It is shown in \cite{StabTC} that the set of stability conditions forms a complex manifold, on which the autoequivalence group $\Aut(D^b(X))$ acts on the left and the universal cover of $\text{GL}^+(2, \mathbb{R})$ acts on the right.
In the definition of Bridgeland stability condition, nonzero object in $\A$ can not be mapped to zero by the central charge $Z$. 
However, as one deforms the central charge towards certain limits, nonzero objects in the heart $\A$ may fall into the kernel of $Z$.
This situation gives rise to a notion called weak stability condition, originally defined by \cite{piyaratne2017stability}, and later on used and studied by \cite{BMT1, broomhead2022partial}. 
In \cite{CLSY1}, the authors further generalized the definition in \cite{piyaratne2017stability} by allowing more flexibility of the phases of the object in $\ker(Z)$, defining 
 their phases as limits of phases of Bridgeland stability conditions. In this way, the generalized weak stability conditions can be seen as degenerations or “limits” of Bridgeland stability conditions. 
 One can similarly study the group actions on the set of weak stability conditions defined in \cite{CLSY1}. 
 
 The study of group actions on Bridgeland/weak stability conditions has many important applications. For example, in \cite{CLSY2}, the authors study the action of a specific autoequivalence--the relative Fourier-Mukai transform--on certain Bridgeland/weak stability conditions, and use it to show the stability of line bundles in certain region of the stability manifold. In this application, understanding the image of Bridgeland/weak stability conditions under the action of the relative Fourier-Mukai transform has been demonstrated to be very useful. From the perspective of applications along this line, it is also evident that the negative curves on a surface play a significant role in studying the stability of line bundles, which motivates the exploration of the weak stability condition at the boundary corresponding to negative curves.  
Motivated by this application, in this paper, we focus on the action of a different type of autoequivalence, the spherical twist associated with a torsion sheaf supported on a negative curve. 
Let $X$ be a K3 surface, we study the action of this spherical twist functor on the standard Bridgeland stability conditions associated to a ample divisor and certain weak stability conditions associated to a nef divisor, and give explicit construction of the images. As a special case of the general construction, we give the first construction of a Bridgeland stability condition associated to a divisor outside the closure of the ample cone, so that the stability condition is outside  the geometric chamber, but  conjecturally lies in the geometric component of the stability manifold by a conjecture of Bridgeland \cite[Conjecture 1.2]{SCK3}. Finally, we discuss the stability of line bundles with respect to certain weak stability conditions on specific K3 surfaces.  Although this is not a direct application of the behavior of stability conditions under spherical twists, understanding the stability of line bundles in weak stability conditions has significant applications to the study of the stability of the line bundles at the standard Bridgeland stability conditions, so we include a brief discussion here.

\subsection{Outline}
Since the image of the weak stability condition associated to a nef divisor under spherical twist is easier to describe, we start with the study of weak stability conditions.  
In Section 2, we review the basic definitions and properties of generalized weak stability conditions, including an example associated with a nef divisor, as studied in \cite{CLSY1}.
We also review the definition and some basics of spherical twist functor on $D^b(X)$ for $X$ a K3 surface. In Section 3, we define another weak stability condition associated to a nef divisor. In Section 4, we compute the image of a central charge under the spherical twist functor, and show that the weak stability conditions constructed in Section 3 is the image of the weak stability condition presented in Section 2 under the spherical twist. In Section 5, we give a construction of the image of a standard form Bridgeland stability condition associated to an ample divisor under the spherical twist. Finally, Sections 6 briefly discuss some results on the stability of line bundles with respect to the weak stability condition in Section 2.
\subsection{Acknowledgment}
TCC was partially supported by NSF CAREER grant DMS-1944952. JL was partially supported by NSF grant DMS-2100906.   YS was supported by the AMS-Simons travel grant. 

\section{Preliminaries}
In this Section, we review some basic Definitions and notions of weak stability condition developed in \cite{CLSY1}.
\subsection{Weak stability condition}
We begin with some basic Definitions. 

	\begin{defn}
		\label{Def:Weak}
		A weak stability condition on $\mathcal{D}$ is a triple $$\sigma=(Z, \A, \{\phi(K)\}_{K\in \ker(Z)\cap \A}),$$ where $\mathcal{A}$ is the heart of a bounded t-structure on $\mathcal{D}$, and $Z: K(\mathcal{D})\rightarrow \mathbb{C}$ a group homomorphism satisfying:
		
		(i) For any $E\in \mathcal{A}$, we have $Z(E)\in\mathbb{H}\cup\mathbb{R}_{\leq 0}$. For any $K\in \ker(Z)\cap\A$, we have $0<\phi(K)\leq 1$. 
  
		(ii) (Weak see-saw property) For any short exact sequence 
		\begin{equation*}
			0\rightarrow K_1\rightarrow K\rightarrow K_2\rightarrow 0
		\end{equation*}
		in $\ker(Z)\cap\A$, we have $\phi(K_1)\geq \phi(K)\geq \phi(K_2)$ or $\phi(K_1)\leq \phi(K)\leq \phi(K_2)$.

		 For any object $E\notin \ker(Z)$, define the phase of an object $E\in\A$ by $$\phi(E)=(1/\pi) \mathrm{arg}\, Z(E)\in (0, 1].$$ We further require 
   
   (iii) The phase function satisfies the Harder-Narasimhan (HN) property. 
	\end{defn}
 
Define the slope of an object with respect to a weak stability condition by 
$\rho(E)=-\text{cot}(\pi\phi(E))$
Following \cite{piyaratne2015moduli}, an object $E\in \A$ is (semi)-stable if for any nonzero subobject $F\subset E$ in $\A$, we have 
\[
\rho(F)<(\leq)\rho(E/F),
\]
or equivalently,
\[
\phi(F)<(\leq)\phi(E/F).
\]
 Similar to Bridgeland stability conditions, weak stability can be characterized by the notation of slicing as in Proposition 3.5 in \cite{CLSY1}.

\subsection{Weak stability condition associated to a nef divisor I}

In 5.13 of \cite{CLSY1}, we constructed a weak stability condition associated to a particular nef divisor on a Weierstra{\ss}  elliptic K3 surface, following the construction of Bridgeland stability condition associated to a nef divisor in \cite{tramel2017bridgeland}. The construction in 5.13 of \cite{CLSY1} actually works for a more general class of nef divisor on a general K3 surface, so we write it in more generality. 

Let $X$ be a K3 surface with a smooth rational curve $C$ such that $C^2=-2$. 
Assume that there exist a nef divisor $\nu$ such that $\nu\cdot C=0$ and $\nu\cdot C'>0$ for any other irreducible curve $C'$ which is not contained in $C$.  
We consider the central charge $Z_{V, \nu}$ which is defined by 
\[
Z_{V, \nu, B}(E)=-\ch^B_2(E)+V\ch^B_0(E)+i\nu\cdot \ch^B_1(E).
\]
If $B=0$, we omit $B$ in the notation. Recall the construction of heart in Tramel-Xia \cite{tramel2017bridgeland}. 
Set
\begin{align*}
  \Tc^s_\nu &= \langle E \in \Coh (X) : E \text{ is $\mu_\nu$-semistable}, \mu_{\nu, \mathrm{min}}(E)>s \rangle \\
  \Fc^s_\nu &= \langle E \in \Coh (X) : E \text{ is $\mu_\nu$-semistable}, \mu_{\nu, \mathrm{max}}(E) \leq s \rangle.
\end{align*} 
Let 
\[
\Coh(X)^{\nu, s}=\langle \F^s_\nu[1], \T^s_\nu\rangle,
\]
when $s=0$ we omit $s$ in the notation.
Denoting the category  $\langle \cO_{C}(j)|j\leq k\rangle$ by $\mC_{k}$, and set
\[
\begin{split}
  \Fc^1_{\nu,k} &= \mC_{k}. \\
    \Tc^1_{\nu,k} &= \{ E \in \Coh(X)^{\nu, s} : \Hom (E, \mC_{k}) = 0\}.
    \end{split}
\]

Then \cite[Lemma 3.2]{tramel2017bridgeland} implies that $(\Tc^1_{\nu,k}, \Fc^1_{\nu,k})$ is a torsion pair in $\Coh^{\nu, s}$, tilting at this torsion pair we obtain the heart
\[
  \Bc^s_{\nu,k} = \langle \Fc^1_{\nu,k}[1], \Tc^1_{\nu,k} \rangle,
\]
when $s=0$, we omit $s$ in the notation. 
 
The category $\B_{\nu, k}$ can be equivalently defined by
\begin{prop}[\cite{CLSY2} Proposition 10.6]
\label{prop:defB0k}
	$\B_{\nu, k}$ is obtained by tilting $\Coh(X)$ at the torsion pair
	\begin{equation*}
		\begin{split}
			\T_{\nu, k}&:=\{E\in \Coh(X)|E\in \T^0_\nu \text{ and } \Hom_{\Coh^\nu}(E, \mC_{ k})=0\}\\
			\F_{\nu, k}&:=\{E\in \Coh(X)|E\in \langle\F^0_\nu, \mC_k\rangle\}.
		\end{split}
	\end{equation*}
	\end{prop}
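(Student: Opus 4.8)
The plan is to read the statement as an instance of the general principle that an iterated tilt is again a single tilt, as soon as the torsion-free class used in the second tilt already sits in degree $0$ of the original heart. Here the generators $\cO_C(j)$ of $\mC_k$ are sheaves lying in $\T^0_\nu$: indeed $\mC_k = \Fc^1_{\nu,k}\subseteq \Coh(X)^\nu$ is the torsion-free class of a torsion pair in $\Coh(X)^\nu$, and any sheaf lying in $\Coh(X)^\nu$ necessarily lies in $\T^0_\nu$ (its $\Coh(X)$-cohomology is concentrated in degree $0$, where membership forces it into $\T^0_\nu$). Writing $H^\bullet$ for cohomology with respect to the standard heart $\Coh(X)$, I would first verify that $(\T_{\nu,k},\F_{\nu,k})$ is a torsion pair in $\Coh(X)$, so that $\B' := \langle \F_{\nu,k}[1], \T_{\nu,k}\rangle$ is a heart, and then establish the inclusion $\B_{\nu,k}\subseteq \B'$. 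Since two hearts of bounded t-structures on $D^b(X)$ with one contained in the other must coincide, this yields $\B_{\nu,k}=\B'$, which is exactly the claim.

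For the torsion-pair axioms, orthogonality is immediate: for $T\in\T_{\nu,k}\subseteq\T^0_\nu$ and $F\in\F_{\nu,k}=\langle\F^0_\nu,\mC_k\rangle$, maps into the $\F^0_\nu$-part vanish because $(\T^0_\nu,\F^0_\nu)$ is a torsion pair, while maps into the $\mC_k$-part vanish by the defining condition $\Hom(T,\mC_k)=0$ (using that for a sheaf $T\in\T^0_\nu$ the groups $\Hom_{\Coh^\nu}(T,\mC_k)$ and $\Hom_{\Coh(X)}(T,\mC_k)$ agree, both computing $\Hom_{D^b(X)}$). The decomposition axiom is where I expect to spend the real effort: given $E\in\Coh(X)$, apply $(\T^0_\nu,\F^0_\nu)$ to get $0\to T^0\to E\to F^0\to 0$, and then apply the torsion pair $(\Tc^1_{\nu,k},\mC_k)$ of $\Coh(X)^\nu$ to the sheaf $T^0\in\T^0_\nu$, giving a short exact sequence $0\to T'\to T^0\to C\to 0$ in $\Coh(X)^\nu$ with $T'\in\Tc^1_{\nu,k}$ and $C\in\mC_k$. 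The key point is that this sequence is in fact a sequence of sheaves: from the long exact $\Coh(X)$-cohomology sequence, $H^{-1}(T')\hookrightarrow H^{-1}(T^0)=0$ together with $T'\in\Coh(X)^\nu$ (cohomology only in degrees $-1,0$) forces $T'$ to be a genuine subsheaf of $T^0$ with $T^0/T'=C\in\mC_k$, whence $T'\in\T_{\nu,k}$. Splicing then produces $0\to T'\to E\to E/T'\to 0$ with $E/T'\in\langle\mC_k,\F^0_\nu\rangle=\F_{\nu,k}$, which is the required decomposition.

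For the inclusion $\B_{\nu,k}\subseteq \B'$, I would take $E\in\B_{\nu,k}=\langle\mC_k[1],\Tc^1_{\nu,k}\rangle$ and use its defining sequence $0\to A'[1]\to E\to B\to 0$ with $A'\in\mC_k$ and $B\in\Tc^1_{\nu,k}$. The long exact $\Coh(X)$-cohomology sequence gives $H^i(E)=0$ for $i\neq -1,0$, an extension $0\to A'\to H^{-1}(E)\to H^{-1}(B)\to 0$ placing $H^{-1}(E)$ in $\F_{\nu,k}$, and an isomorphism $H^0(E)\cong H^0(B)$. It then remains to see $H^0(B)\in\T_{\nu,k}$: applying $\Hom_{\Coh^\nu}(-,\mC_k)$ to the canonical sequence $0\to H^{-1}(B)[1]\to B\to H^0(B)\to 0$ in $\Coh(X)^\nu$ and using $\Ext^{-1}(H^{-1}(B),\mC_k)=0$ identifies $\Hom_{\Coh^\nu}(B,\mC_k)$ with $\Hom(H^0(B),\mC_k)$, so the hypothesis $B\in\Tc^1_{\nu,k}$ transfers to $H^0(B)\in\T_{\nu,k}$.

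I expect the main obstacle to be the bookkeeping between the two t-structures in the decomposition step, namely confirming that the second tilt merely reshuffles sheaves rather than introducing genuine two-term complexes; this is precisely where the hypothesis $\mC_k\subseteq\T^0_\nu$ is indispensable. Everything else reduces to standard long-exact-sequence manipulations together with the repeated identification of $\Hom$-groups in the hearts $\Coh(X)$ and $\Coh(X)^\nu$ with $\Hom$-groups in $D^b(X)$.
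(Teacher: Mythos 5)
The paper itself contains no proof of this proposition for you to be measured against: it is imported directly from \cite{CLSY2} (Proposition 10.6) as a citation, with no argument reproduced here. Your proof is correct, and it is the standard route for ``an iterated tilt is a single tilt'' statements: the three load-bearing points --- that $\mC_k\subseteq\T^0_\nu\subseteq\Coh(X)\cap\Coh(X)^{\nu}$ so all the relevant $\Hom$-groups agree with $\Hom_{D^b(X)}$, that the second-tilt decomposition $0\to T'\to T^0\to C\to 0$ of a sheaf $T^0\in\T^0_\nu$ is forced (by $H^{-1}(T')\hookrightarrow H^{-1}(T^0)=0$) to be a short exact sequence of sheaves, and that two nested hearts of bounded t-structures on $D^b(X)$ coincide --- are exactly where such a proof must turn, and you handle each of them correctly, including the cohomology bookkeeping $0\to A'\to H^{-1}(E)\to H^{-1}(B)\to 0$, $H^0(E)\cong H^0(B)$ needed for the inclusion $\B_{\nu,k}\subseteq\B'$.
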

 
Since $X$ is a K3 surface, we still have the following Proposition in \cite{CLSY1}.
\begin{prop}[\cite{CLSY1} Proposition 5.14]
	We have $Z_{V, \nu}(\B_{\nu, k})\in \mathbb{H}_0$ for $k=-1, -2$. 
Furthermore, we have	
	when $k=-1$,
	$$\kernel (Z_{V, \nu})\cap \B_{\nu, -1}=\langle\cO_C(-1)[1]\rangle,$$
	when $k=-2$,
	$$\kernel (Z_{V, \nu})\cap \B_{\nu, -2}= \langle \cO_C(-1)\rangle.$$
\end{prop}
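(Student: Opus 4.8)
The plan is to reduce everything to one Chern-character computation for the sheaves $\cO_C(j)$ and to the rigidity of $\cO_C(-1)$. On the K3 surface $X$ one has $\ch_0(\cO_C(j))=0$, $\ch_1(\cO_C(j))=[C]$, and (since $c_1(X)=0$) Riemann--Roch gives $\ch_2(\cO_C(j))=\chi(\cO_C(j))=j+1$. As $\nu\cdot C=0$, this yields
\[
Z_{V,\nu}(\cO_C(j))=-\ch_2(\cO_C(j))+i\,\nu\cdot\ch_1(\cO_C(j))=-(j+1),
\]
a real number which is $\le 0$ exactly when $j\ge -1$ and $\ge 0$ exactly when $j\le -1$. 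A Riemann--Roch computation also gives $\chi(\cO_C(-1),\cO_C(-1))=-C^2=2$, whence $\Ext^1(\cO_C(-1),\cO_C(-1))=0$; thus $\cO_C(-1)$ is rigid and $\langle\cO_C(-1)\rangle$ consists of finite direct sums of copies of $\cO_C(-1)$ (and likewise for its shift). Since $\cO_C(-1)\in\mC_{-1}$ while $\cO_C(-1)\notin\mC_{-2}$ but lies in $\T_{\nu,-2}$ (as $\Hom(\cO_C(-1),\cO_C(j))=0$ for $j\le-2$), the objects $\cO_C(-1)[1]\in\B_{\nu,-1}$ and $\cO_C(-1)\in\B_{\nu,-2}$ have vanishing central charge, giving the inclusions ``$\supseteq$''.

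For the containment $Z_{V,\nu}(\B_{\nu,k})\subseteq\mathbb{H}_0$ I would use Proposition~\ref{prop:defB0k}: every $E\in\B_{\nu,k}$ fits in $0\to F[1]\to E\to T\to 0$ with $T\in\T_{\nu,k}\subseteq\T^0_\nu$ and $F\in\F_{\nu,k}=\langle\F^0_\nu,\mC_k\rangle$. Since $\mathrm{Im}\,Z_{V,\nu}=\nu\cdot\ch_1$, while $T\in\T^0_\nu$ has $\nu\cdot\ch_1(T)\ge 0$ and the two kinds of factors of $F$ (sheaves in $\F^0_\nu$, with $\nu\cdot\ch_1\le 0$, and the $\nu$-trivial $\cO_C(j)$) give $\nu\cdot\ch_1(F)\le 0$, we obtain $\mathrm{Im}\,Z_{V,\nu}(E)=\nu\cdot\ch_1(T)-\nu\cdot\ch_1(F)\ge 0$. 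All the work is on the boundary $\mathrm{Im}=0$: then $T$ is a torsion sheaf whose one-dimensional support lies on $C$, and the $\F^0_\nu$-graded piece $G$ of $F$ satisfies $\nu\cdot\ch_1(G)=0$, hence is $\mu_\nu$-semistable of slope $0$. For such $G$ I would invoke the Bogomolov inequality together with the Hodge index theorem (here $\nu^2>0$, so $\ch_1(G)\in\nu^\perp$ has $\ch_1(G)^2\le 0$) to get $\ch_2(G)\le 0$, hence $\mathrm{Re}\,Z_{V,\nu}(G)=-\ch_2(G)+V\ch_0(G)\ge 0$ for $V>0$; together with $\mathrm{Re}\,Z_{V,\nu}(\cO_C(j))=-(j+1)\ge 0$ for the $\mC_k$-factors (as $j\le k\le -1$) this gives $\mathrm{Re}\,Z_{V,\nu}(F)\ge 0$. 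Dually, $\Hom_{\Coh^\nu}(T,\mC_k)=0$ forces the $\cO_C(a)$-factors of $T$ to have $a>k$, so with $k\ge -2$ one gets $a+1\ge 0$ and $\mathrm{Re}\,Z_{V,\nu}(T)=-\chi(T)\le 0$. Hence $\mathrm{Re}\,Z_{V,\nu}(E)=\mathrm{Re}\,Z_{V,\nu}(T)-\mathrm{Re}\,Z_{V,\nu}(F)\le 0$, so $Z_{V,\nu}(E)\in\mathbb{H}_0$; note that the two bounds $k\le -1$ and $k\ge -2$ are exactly what make the signs work.

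The inclusions ``$\subseteq$'' then follow by pushing the same analysis to equality. Once $Z_{V,\nu}(\B_{\nu,k})\subseteq\mathbb{H}_0$ is known, $\kernel(Z_{V,\nu})\cap\B_{\nu,k}$ is a Serre subcategory: in any short exact sequence with middle term in the kernel, the imaginary parts (of fixed sign, summing to $0$) vanish, and then the real parts (now nonpositive, summing to $0$) vanish, so both outer central charges are $0$. Applying this to $0\to F[1]\to E\to T\to 0$ gives $Z_{V,\nu}(F)=Z_{V,\nu}(T)=0$. Vanishing of $\mathrm{Re}\,Z_{V,\nu}(F)=\mathrm{Re}\,Z_{V,\nu}(G)+\sum_a(-(j_a+1))$, a sum of nonnegative terms, forces $G=0$ and every $\mC_k$-factor to be $\cO_C(-1)$; since $j_a\le k$, this happens for $k=-1$ (so $F\in\langle\cO_C(-1)\rangle$) and is vacuous for $k=-2$ (so $F=0$). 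Symmetrically $\mathrm{Re}\,Z_{V,\nu}(T)=-\chi(T)=0$ forces $T$ to have no $0$-dimensional factor and all $\cO_C(a)$-factors with $a=-1$, giving $T=0$ for $k=-1$ and $T\in\langle\cO_C(-1)\rangle$ for $k=-2$. Assembling the cases and using $\Ext^1(\cO_C(-1),\cO_C(-1))=0$ to split extensions yields $E\cong\cO_C(-1)^{\oplus n}[1]$ when $k=-1$ and $E\cong\cO_C(-1)^{\oplus n}$ when $k=-2$.

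The main obstacle is the boundary analysis, and within it the two facts governing the real part along $\nu\cdot\ch_1=0$: that a $\mu_\nu$-semistable torsion-free sheaf of slope $0$ has $\ch_2\le 0$ (Bogomolov plus Hodge index, valid since $\nu$ is big and nef), and that the Hom-vanishing condition defining $\T_{\nu,k}$ forces $\chi(T)\ge 0$ for the $C$-supported torsion sheaves $T$. The latter is precisely what pins down the admissible range $k\in\{-1,-2\}$ and needs genuine argument; I would prove it by induction on the length of $T$, showing that any Jordan--H\"older factor $\cO_C(a)$ with $a\le k$ (whether it appears as a sub or a quotient) forces a nonzero map into $\mC_k$ through the long exact sequences $\Hom(-,\cO_C(k))$, contradicting $T\in\T_{\nu,k}$, so that all factors have $a>k$.
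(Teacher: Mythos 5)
The paper itself does not prove this proposition; it is imported verbatim from \cite{CLSY1}, so the only comparison available is with the Tramel--Xia-style analysis the paper relies on elsewhere. Your architecture matches it and is essentially correct: the computation $Z_{V,\nu}(\cO_C(j))=-(j+1)$, control of the imaginary part via the torsion pair of Proposition \ref{prop:defB0k}, Bogomolov plus Hodge index for the slope-zero torsion-free pieces (legitimate, since the hypotheses on $\nu$ do force $\nu^2>0$, hence $\nu$ big and nef), and additivity of $\chi$ together with rigidity of $\cO_C(-1)$ to pin down the kernel. The Serre-subcategory reduction and the case analysis in $k=-1,-2$ are all sound.

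The genuine gap is in the final lemma, exactly the one you flag as needing argument. You propose to show by induction that $\Hom(T,\mC_k)=0$ forces every filtration factor $\cO_C(a)$ of $T$ to satisfy $a>k$, ``whether it appears as a sub or a quotient.'' The quotient half is trivial, but the sub half is false, and ``Jordan--H\"older factors'' are not well defined here since such filtrations are far from unique. Counterexample: pushing forward the twisted Euler sequence on $C\cong\PP^1$ gives $0\to\cO_C(-2)\to\cO_C(-1)^{\oplus 2}\to\cO_C\to 0$ in $\Coh(X)$; the middle term lies in $\T_{\nu,-2}$ (as $\Hom(\cO_C(-1),\cO_C(j))=H^0(\PP^1,\cO(j+1))=0$ for $j\le -2$), yet it admits the subsheaf factor $\cO_C(-2)$ with $-2\le k$. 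So your induction already fails in length two. The statement you need---$\chi(T)\ge 0$, with equality only for $T\in\langle\cO_C(-1)\rangle$ (resp.\ $T=0$ when $k=-1$)---is nevertheless true, and since $\chi$ is filtration-independent it suffices to exhibit \emph{one} good filtration rather than control all of them. Take the $I_C$-adic filtration of $T$: each graded piece $Q_m=I_C^mT/I_C^{m+1}T$ is a sheaf on $C$, hence a direct sum of line bundles and a zero-dimensional sheaf. The line-bundle summands of $Q_0$ are quotients of $T$, so have degree $>k$ by the Hom-vanishing; and for $m\ge 1$ multiplication induces a surjection $(I_C/I_C^2)\otimes Q_{m-1}=\cO_C(2)\otimes Q_{m-1}\twoheadrightarrow Q_m$, while any line-bundle quotient of a direct sum of line bundles (plus torsion) on $\PP^1$ has degree at least the minimum degree appearing, so degrees only increase with $m$. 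Hence all line-bundle factors have degree $\ge k+1\ge -1$ and $\chi(T)\ge 0$, with equality exactly when there is no zero-dimensional part and every factor is $\cO_C(-1)$; the rest of your argument then goes through verbatim. This repair is in the spirit of the paper's Lemma \ref{lem:Ck}, which treats the dual vanishing $\Hom(\cO_C(k),E)=0$ by a comparable structural induction rather than by tracking arbitrary filtration factors.
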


To define a weak stability condition, we still need to define the phase of the objects in the kernel of the central charge. We only consider the case where $k=-2$.
Recall that if $\omega$ is an ample class, then $\nu_\epsilon:=\nu+\epsilon\omega$ is ample for $\epsilon>0$. Picking an arbitrary ample class $\omega$, we would like to define weak stability conditions $\sigma^b_{V, \nu}=(Z_{V, \nu}, \B_{\nu, -2}, \{\phi^b_{V, \nu}(K)\}_{K\in\ker(Z_{V, \nu})\cap\B_{\nu, -2}})$ as the limit of the standard Bridgeland stability conditions $\sigma_{V, \nu_\epsilon}=(Z_{V, \nu_\epsilon}, \Coh^{\nu_\epsilon})$.
In particular, we define the phases $\{\phi^b_{V, \nu}(K)\}_{K\in \ker(Z_{V, \nu})\cap \B_{\nu, -2}}$ by taking the limit of the phases of $\sigma_{V, \nu_\epsilon}(K)$ as $\epsilon\to 0$.
Note that
$$Z_{V, \nu_\epsilon}(\cO_C(-1))=i\epsilon\omega\cdot C.$$
Since 
$\ker(Z_{V, \nu})\cap \B_{\nu, -2}=\langle \cO_{C}(-1)\rangle$. Then for any $K\in \ker(Z_{V, \nu})\cap \B_{\nu, -2}$, we have
\[
\lim\limits_{\epsilon\rightarrow 0}\phi_{V, \nu_\epsilon}(K)=\frac{1}{2}.
\]
From the above discussion, for any $K\in\ker(Z_{V, \nu})\cap \B_{\nu, -2}$ we define $\phi^b_{V, \nu}(K)=\frac{1}{2}$. 
 Then the weak see-saw property follows automatically.
 Since $X$ is a K3 surface, the torsion sheaf $\cO_{C}(t)$ is a spherical object for any $t\in \mathbb{Z}$, in particular $\Ext^1(\cO_C(t), \cO_C(t))=0$. Then the argument in Proposition 5.8 in \cite{CLSY1} implies that the triples $\sigma^b_{V, \nu}$   
 satisfies the HN property, hence define a weak stability condition. 

\begin{rem}
\label{rk:Brinub}
Note that the above definition of the phases of objects in $\ker(Z_{V, \nu})$ as the limits of Bridgeland stability conditions is not unique. E.g. let $B\in \NS_\mathbb{Q}(X)$ such that $B\cdot\nu=0$, (e.g. taking $B=uC$). Then 
\cite{tramel2017bridgeland} shows that the pair $(Z_{V, \nu, B}, \B_{\nu, -2})$ defines a Bridgeland stability condition for $-1<C\cdot B<0$ ($0<u<\frac{1}{2}$). We denote this set of Bridgeland stability conditions by $\sigma_{V, \nu, B}$. 

Let $p=[\epsilon, u]\in \mathbb{P}^1$. For $K\in \ker(Z_{V, \nu})\cap \B_{\nu, -2}$, we can define 
\[
\phi^{b, p}_{V, \nu}(K)=\lim\limits_{p=[\epsilon, u],\epsilon\rightarrow 0}\phi_{V, \nu_\epsilon, uC}(K)=\frac{1}{\pi}\arcot(kp),
\]
where 
$k=\frac{2}{\omega\cdot C}$.
Then we obtain a family of weak stability conditions $\sigma^{b,p}_{V, \nu}$. Let $\omega$ be an arbitrary ample divisor on $X$. The weak stability condition corresponds to $p=[1: 0]$ is the limit of $\sigma_{\nu+\epsilon\omega, 0}$ as $\epsilon\rightarrow 0$, i.e. $\sigma^b_{V, \nu}$. 
While the weak stability function corresponds to $p=[0:1]$ is the limit of $\sigma_{V, \nu, uC}$ as $u\rightarrow 0$. 
\end{rem}

\subsection{spherical twist functor}
Spherical twist functor is defined in \cite{ST01}. Let $X$ be a projective variety of dimension $n$. Recall that an object $E$ in $D^b(X)$ is called spherical if 
$\Ext^i(E, E)\simeq \mathbb{C}$ when $i=0, n$, and equals to $0$ otherwise, and $E\otimes \omega_X\simeq E$. 
\begin{defn}[\cite{ST01}]
\label{def:ST}
Let $E$ be a spherical object in $D^b(X)$. The spherical twist functor $\ST_E:D^b(X)\to D^b(X)$ is defined to be the Fourier-Mukai transform with kernel equals to 
\[\text{Cone}(E^\vee\boxtimes E\to \cO_\Delta).
\]
\end{defn}
By Exercise 8.5 in \cite{huybrechts2006fourier}, the image of an object $F$ under the spherical twist functor is given by 
\[
\ST_E(F)\simeq \text{Cone}(\RHom(E, F)\otimes E\to F).
\]
Theorem 1.2 in \cite{ST01} implies that the functor $\ST_E$ associated to a spherical object $E$ is an exact self-equivalence of $D^b(X)$.

\section{Weak stability condition associated to a nef divisor II}
Let $X$, $C$ and $\nu$ be as in the previous Section. In this Section, we construction a different weak stability condition associated to the nef divisor $\nu$.
Recall the heart $\B_{\nu, -2}$ from the previous Section. Let $\T_C$ be the subcategory $\langle\cO_C(-1)\rangle$ of $\B_{\nu, -2}$, and let 
\[
\F_C:=\{F\in \B_{\nu, -2}|\Hom(\T_C, F)=0\}.
\]
\begin{prop}
\label{prop:torsionpair2nef}
The pair $(\T_C, \F_C)$ defines a torsion pair on $\B_{\nu, -2}$.
\end{prop}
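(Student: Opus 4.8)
The plan is to verify the two defining properties of a torsion pair: that $\Hom(T,F)=0$ for all $T\in\T_C$ and $F\in\F_C$, and that every $E\in\B_{\nu,-2}$ fits into a short exact sequence $0\to T\to E\to F\to 0$ with $T\in\T_C$, $F\in\F_C$. The orthogonality is immediate from the definition $\F_C=\{F\in\B_{\nu,-2}\mid\Hom(\T_C,F)=0\}$, so the whole content is the existence of the decomposition. The key structural input, recorded in the Proposition quoted above, is that $\T_C=\langle\cO_C(-1)\rangle$ coincides with $\ker(Z_{V,\nu})\cap\B_{\nu,-2}$. Because $Z_{V,\nu}$ takes values in $\HH_0$ on $\B_{\nu,-2}$, this kernel is a Serre subcategory: in any short exact sequence in $\B_{\nu,-2}$ with total charge $0$ both sub and quotient again have vanishing charge, since for $z\in\HH_0$ with $-z\in\HH_0$ one must have $z=0$. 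In particular $\T_C$ is closed under subobjects, quotients, and extensions, and since $\Ext^1(\cO_C(-1),\cO_C(-1))=0$ every object of $\T_C$ is a finite direct sum $\cO_C(-1)^{\oplus m}$.

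First I would construct the candidate torsion subobject. Since $X$ is projective, $h:=\dim_\C\Hom(\cO_C(-1),E)$ is finite, and I set $T:=\image\big(\mathrm{ev}\colon\cO_C(-1)^{\oplus h}\to E\big)$, where $\mathrm{ev}$ is the evaluation morphism. As a quotient of $\cO_C(-1)^{\oplus h}\in\T_C$ and a subobject of $E$, the object $T$ lies in $\B_{\nu,-2}$, and since $\T_C$ is closed under quotients we get $T\in\T_C$. By construction every morphism $\cO_C(-1)\to E$ has image contained in $T$, hence factors through the inclusion $T\hookrightarrow E$; therefore the induced map $\Hom(\cO_C(-1),T)\to\Hom(\cO_C(-1),E)$ is surjective, and (the inclusion being monic) an isomorphism.

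It then remains to show $F:=E/T$ lies in $\F_C$, that is $\Hom(\cO_C(-1),F)=0$, which since $\T_C$ is generated by $\cO_C(-1)$ yields $\Hom(\T_C,F)=0$. Applying $\Hom(\cO_C(-1),-)$ to $0\to T\to E\to F\to 0$ and using that $\Hom(\cO_C(-1),T)\to\Hom(\cO_C(-1),E)$ is onto, the map $\Hom(\cO_C(-1),E)\to\Hom(\cO_C(-1),F)$ vanishes, so $\Hom(\cO_C(-1),F)$ injects into $\Ext^1(\cO_C(-1),T)$. This is precisely where sphericity enters, and I expect it to be the \emph{main obstacle}: a priori $F$ could fail to be torsion-free because of a nonsplit extension class. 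Here $T\cong\cO_C(-1)^{\oplus m}$, and $\cO_C(-1)$ being spherical gives $\Ext^1(\cO_C(-1),\cO_C(-1))=0$ in $D^b(X)$, hence in the heart $\B_{\nu,-2}$, so $\Ext^1(\cO_C(-1),T)=0$. This forces $\Hom(\cO_C(-1),F)=0$, giving $F\in\F_C$ and exhibiting $0\to T\to E\to F\to 0$ as the required decomposition.
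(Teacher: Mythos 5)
Your proof is correct, and it takes a genuinely different route from the paper's. The paper argues map by map: it picks a single nonzero morphism $\theta_1\colon \cO_C(-1)\to E$, shows by a cohomology-sheaf case analysis (using $\mu_\nu$-slopes, the structure of $\mC_{-2}$, and the GRR vanishing $\Ext^1(\cO_C(-1),\cO_C(-2))=0$) that its kernel in $\B_{\nu,-2}$ vanishes, and then iterates on the cokernels $N_i$, termination being guaranteed because $\Ext^1(\cO_C(-1),\cO_C(-1))=0$ makes $\dim\Hom(\cO_C(-1),N_i)$ strictly decrease. You avoid the kernel analysis entirely: identifying $\T_C=\langle\cO_C(-1)\rangle$ with $\kernel(Z_{V,\nu})\cap\B_{\nu,-2}$ and using $Z_{V,\nu}(\B_{\nu,-2})\subseteq\mathbb{H}_0$ (both quoted from \cite{CLSY1}, Proposition 5.14), you observe that this kernel is a Serre subcategory of the heart, so the image of the evaluation map $\cO_C(-1)^{\oplus h}\to E$ lies in $\T_C$ no matter what its kernel is; sphericity then enters only to split $T$ into copies of $\cO_C(-1)$ and to kill $\Ext^1(\cO_C(-1),T)$, which forces the quotient into $\F_C$ in one step rather than by induction. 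Your argument is shorter, requires no geometric input beyond the two quoted facts, and generalizes verbatim to any weak stability condition whose kernel-in-heart is the extension closure of an $\Ext^1$-rigid object. What the paper's longer computation buys is a stronger byproduct: every nonzero map $\cO_C(-1)\to E$ in $\B_{\nu,-2}$ is injective. That statement does not follow from your proof, and the paper reuses exactly this argument later (in the proof of Theorem \ref{thm:corrheart}, to show the map $\iota$ there is injective in $\B_{\nu,-2}$), so while your proof fully establishes the Proposition as stated, it could not be substituted for the paper's wholesale without supplying that injectivity argument separately where it is invoked.
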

\begin{proof}
By definition, we have $\Hom(\T_C, \F_C)=0$. We only need to show for any object $E\in \B_{\nu, -2}$, $E$ fits in a s.e.s. 
\[
0\to T\to E\to F\to 0,
\]
where $T\in \T_C$ and $F\in \F_C$. If $\Hom(\cO_C(-1), E)=0$, we have $E\in \F_C$. From now on we assume $\Hom(\cO_C(-1), E)\neq 0$. Then there exists a nonzero map $\theta_1: \cO_C(-1)\to E$, and we have the following s.e.s's in $\B_{\nu, -2}$:
\begin{equation}
\label{eq:3.1ses}
\begin{split}
0&\to K_1\to\cO_C(-1)\to I_1\to 0\\
0&\to I_1\to E\to N_1\to 0
\end{split}
\end{equation}
where $K_1$, $I_1$ and $N_1$ are the kernel, image and cokernel of $\theta_1$ in $\B_{\nu, -2}$ respectively. 
We show that $K_1= 0$. 
Assume not, we have s.e.s in $\Coh(X)$
\[
0\to H^{-1}(I_1)\to K_1\to \cO_C(j)\to 0
\]
for some $j\leq -1$. If $H^{-1}(I_1)=0$, then $H^0(I_1)\neq 0$ and the first s.e.s. in equation \ref{eq:3.1ses} is a s.e.s. in $\Coh(X)$, this implies that $K_1\in \mC_{-2}$ which is a contradiction. Then $H^{-1}(I_1)\neq 0$. Then $\mu_\nu(K_1)\geq 0$ and $\mu_\nu(H^{-1}(I_1))\leq 0$ forces
$\mu_\nu(K_1)= 0$, hence $K_1$ is a torsion sheaf supported on $C$. This implies that $H^{-1}(I_1)$ is a torsion sheaf supported on $C$ and hence $H^{-1}(I_1)\in \mC_{-2}$. Since $\Hom(K_1, 
\mC_{-2})=0$ we must have $j=-1$.
But GRR implies that $\Ext^1(\cO_C(-1), \cO_C(-2))=0$, also $\Hom(K_1, \mC_{-2})=0$, hence $\Hom(H^{-1}(I_1), \cO_C(-2))=0$. This contradicts $H^{-1}(I_1)\in \mC_{-2}$.

Hence $K_1=0$, and we have 
\[
0\to \cO_C(-1)\to E\to N_1\to 0.
\]
If $\Hom(\cO_C(-1), N_1)=0$, this is the s.e.s we need. If not, we repeat the process for $N_1$. Since $\Ext^1(\cO_C(-1), \cO_C(-1))=0$, the dimension of $\Hom(\cO_C(-1), N_i)$ decreases and eventually we obtain the desired s.e.s. 
\end{proof}
Let $\A_{\nu, -2}:=\langle\F_C, \T_C[-1]\rangle$. Since $\ker Z_{V, \nu}\cap \A_{\nu, -2}=\langle\cO_C(-1)[-1]\rangle$, the central charge $Z_{V, \nu}$ also defines a weak stability function on $\A_{\nu, -2}$. 
We define the phases of the object $\cO_C(-1)[-1]$ to be $\phi^a_{V, \nu}(\cO_C(-1)[-1])=\frac{1}{2}$. Since $\ker(Z_{V, \nu})\cap\A_{\nu, -2}=\langle\cO_C(-1)[-1]\rangle$, then any object $K\in\ker(Z_{V, \nu})\cap\A_{\nu, -2}$ has phase equal to $\frac{1}{2}$, and the weak see-saw property is automatically satisfied. 
\begin{prop}
The triple $\sigma^a_{V, \nu}:=(Z_{V, \nu}, \A_{\nu, -2}, \{\phi^a_{V, \nu}(K)\})$ satisfies the HN property, hence defines a weak stability condition. 
\end{prop}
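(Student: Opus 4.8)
Everything in the definition of a weak stability condition except the Harder--Narasimhan property has already been arranged in the discussion preceding the statement: we have $Z_{V,\nu}(\A_{\nu,-2})\subseteq\mathbb{H}\cup\mathbb{R}_{\leq 0}$, the kernel $\ker(Z_{V,\nu})\cap\A_{\nu,-2}=\langle\cO_C(-1)[-1]\rangle$ is spanned by a single object of the prescribed phase $\tfrac12$, and the weak see-saw property is automatic since the kernel is one-dimensional. So the plan is to prove only property (iii), and to do it by rerunning the argument already used for $\sigma^b_{V,\nu}$, namely the proof of \cite[Proposition 5.8]{CLSY1}, after checking that its hypotheses persist for $\A_{\nu,-2}$. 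The two essential inputs of that argument are the rigidity of the generator of the kernel, here $\Ext^1(\cO_C(-1),\cO_C(-1))=0$ (so that $\cO_C(-1)[-1]$ is again rigid), together with the same finiteness properties of $Z_{V,\nu}=-\ch^B_2+V\ch^B_0+i\,\nu\cdot\ch^B_1$ that drive the HN argument for $\sigma^b_{V,\nu}$ and control filtrations whose phases stay away from $\tfrac12$.

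\textbf{The construction of the filtration.} The reduction I would use is the canonical short exact sequence attached to the tilt. Since $\A_{\nu,-2}=\langle\F_C,\T_C[-1]\rangle$ arises from $\B_{\nu,-2}$ by tilting at the torsion pair $(\T_C,\F_C)$ whose torsion class $\T_C=\langle\cO_C(-1)\rangle$ lies entirely in $\ker(Z_{V,\nu})$, every $E\in\A_{\nu,-2}$ fits into
\[
0\to F_E\to E\to T_E[-1]\to 0
\]
with $F_E=H^0_{\B_{\nu,-2}}(E)\in\F_C$ and $T_E=H^1_{\B_{\nu,-2}}(E)\in\T_C$. Because $\Ext^1(\cO_C(-1),\cO_C(-1))=0$ we get $T_E\cong\cO_C(-1)^{\oplus b}$, so $T_E[-1]$ is a kernel object of phase $\tfrac12$. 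I would then assemble the HN filtration of $E$ from that of $F_E$: the semistable factors of phase $>\tfrac12$ and of phase $<\tfrac12$ are produced exactly as in the $\sigma^b_{V,\nu}$ argument, using the finiteness of $Z_{V,\nu}$, while all contributions of phase exactly $\tfrac12$---the genuine (purely imaginary, $Z_{V,\nu}$-nonzero) phase-$\tfrac12$ part of $F_E$ together with the kernel quotient $T_E[-1]$---are amalgamated into a single phase-$\tfrac12$ semistable factor. This amalgamation is legitimate precisely because all of these objects have phase $\tfrac12$ and, by the definition of weak stability, the kernel object $\cO_C(-1)[-1]$ cannot destabilize.

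\textbf{The main obstacle.} The difficulty is concentrated in the phase-$\tfrac12$ band, and the only genuinely new point relative to $\sigma^b_{V,\nu}$ is that the generator of the kernel is positioned as $\cO_C(-1)[-1]$ rather than $\cO_C(-1)$; consequently the $\B_{\nu,-2}$-HN filtration of $F_E$ cannot simply be copied into $\A_{\nu,-2}$, since a factor of that filtration may carry a nontrivial $\cO_C(-1)$-torsion part that is repositioned by the tilt. I would handle this by re-deriving the structure of the phase-$\tfrac12$ slice directly in $\A_{\nu,-2}$: because the only kernel object up to isomorphism is the rigid $\cO_C(-1)[-1]$, the phase-$\tfrac12$ part of any object is an extension of finitely many copies of $\cO_C(-1)[-1]$ with a genuine phase-$\tfrac12$ object, and rigidity rules out the infinite chains that would otherwise obstruct HN. Verifying that the amalgamated phase-$\tfrac12$ factor admits no proper destabilizing subobject in $\A_{\nu,-2}$---equivalently, checking the chain condition inside the kernel---is exactly where the sphericity of $\cO_C(-1)$ enters, as in \cite[Proposition 5.8]{CLSY1}; once this is in place the decreasing-phase filtration assembles and the HN property follows.
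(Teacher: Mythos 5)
Your proposal is correct and takes essentially the same approach as the paper: reduce everything to the HN property, decompose objects of $\A_{\nu,-2}$ via the torsion pair $(\F_C,\T_C[-1])$ so that rigidity ($\Ext^1(\cO_C(-1),\cO_C(-1))=0$) forces the $\T_C[-1]$-part to be $\cO_C(-1)[-1]^{\oplus n}$, and defer the remaining work on $\F_C$ to Proposition 5.8 of \cite{CLSY1}. What you call the main obstacle is handled in the paper exactly by the rigidity-controlled chain argument you sketch, implemented via the quotient-chain criterion of Proposition 3.6(ii) in \cite{CLSY1}: a chain of surjections $E_1\twoheadrightarrow E_2\twoheadrightarrow\cdots$ induces surjections $T_i\twoheadrightarrow T_{i+1}$ on the $\T_C[-1]$-parts, whose multiplicities $n_i$ are non-increasing and hence stabilize, after which the chain reduces to one in $\F_C$.
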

\begin{proof}
The proof is almost the same as Proposition 5.8 in \cite{CLSY1}. The only difference is in verifying (ii) in Proposition 3.6 in \cite{CLSY1}. Assuming we have a sequence 
\[
E=E_1\twoheadrightarrow E_2\twoheadrightarrow E_3\twoheadrightarrow...
\]

Consider the short exact sequences 
\begin{equation}\label{Equ:HNKEE}
0\to K_i\to E_i\to E_{i+1}\to 0.
\end{equation}
Then $E_i$ and $E_{i+1}$ fit into a commutative diagram
\[
\begin{tikzcd}
    0\ar{r} &F_i\ar{r}\ar{d}& E_i\ar{r}\ar{d}& T_{i}\ar{r}\ar{d}& 0\\
    0\ar{r} &F_{i+1}\ar{r}& E_{i+1}\ar{r}\ar{d}& T_{i+1}\ar{r}\ar{d}& 0\\
   & &0 & 0&
\end{tikzcd}
\]
where $F_i$ and $F_{i+1}$ are in $\F_C$, $T_i$ and $T_{i+1}$ are in $\T_C[-1]$. 
Hence $T_i\simeq \cO_C(-1)[-1]^{n_i}$. Since $T_i\twoheadrightarrow T_{i+1}$, we have $n_i$ is not increasing. As a result, skipping finite number of terms, we may assume $T_i\simeq T_{i+1}$ for all $i> 0$. 
Then it is enough to show a sequence $F_i\twoheadrightarrow F_{i+1}\twoheadrightarrow ... $ in $\F_C$ with $\phi^a_{V, \nu}(K_i)>\phi^a_{V, \nu}(F_{i+1})$ stabilizes.
The rest of the argument is the same as Proposition 5.8 in \cite{CLSY1}. 
\end{proof}

\section{Weak stability condition under $\ST_{\cO_C(-1)}$}
\subsection{Central charge formula}
Let $X$, $C$ be as in the previous Section. Recall that we denote $e:=-C^2=2$.
It is well known that $\cO_C(t)$ is a spherical object for any $t\in \mathbb{Z}$. We denote the spherical twist functor with respect to $\cO_C(t)$ by $\STt$. In this Section, we solve a central charge formula under the action of $\STt$. The coordinates we are using is not in the most general form, nonetheless, this coordinate will make certain discussion in Section 5, e.g. Remark \ref{rk:5.9nonnef} more natural. 

Let $D$ be a 
divisor on $X$ such that $D\cdot C=1$ and $D^2=0$. E.g. let $\nu$ be as in the previous Section, such that $\nu^2=2$, then $D:=\frac{1}{e}(\nu-C)$ satisfies the above conditions. Let $\psi, \chi\in \langle C, D\rangle^\perp$. Let $\omega$, $B$, $\oo$, $\olB$ be classes in $\NS_\mathbb{R}(X)$ which are of the form
\[
\begin{split}
    \omega&=C+(e+D_\omega)D+G_\omega\psi,\\
    B&=R_B(C+(D_B+e)D+G_B\chi),
\end{split}
\]
where $D_\omega>-1$, $R_B$, $D_B, G_B\in \mathbb{R}$. We use similar notation for the expression of $\oo$ and $\olB$ in this coordinate and solve the central charge formula
\begin{equation}
\label{eq:centralchargefor}
Z_{V, \omega, B}(\STt(E))=g\cdot Z_{\overline{V}, \overline{\omega}, \overline{B}}(E)
\end{equation}
for some element $g\in \text{GL}_n$.

Denote the Mukai vector of $E$ by $\nu(E)=(n, c_1(E), s)$. Then we have
\[
\nu(\STt(E))=(n, c_1(E), s)+(c-n(t+1))(0, C, t+1).
\]
Denoting $c:=\ch_1(E)\cdot C$, $d=\ch_1(E)\cdot D$, then we have the following numerical invariant for $\STt(E)$. 
\[
\begin{split}
n(\STt(E))&=n, \\
c(\STt(E))&=-c+2n(t+1), \\
d(\STt(E))&=d+c-n(t+1), \\
s(\STt(E))&=s+(c-n(t+1))(t+1).
\end{split}
\]
Then \[
\begin{split}
\Re Z_{V, \omega, B}(\STt(E))&=
-s+(R_B(D_B+1)-(t+1))c+R_B(D_B+e)d\\
&+((t+1)^2-R_BD_B(t+1)+V+1-\frac{1}{2}B^2)n
+R_BG_B\chi c_1(E)\\
\Im Z_{V, \omega, B}(\STt(E))&=(D_\omega+1)c+(D_\omega+e)d-(D_\omega(t+1)+\omega\cdot B)n+G_\omega\psi c_1(E).
\end{split}
\]

Before the spherical twist, we have  
\[
\begin{split}
\Re Z_{\overline{V}, \oo}(E)&=-s+R_{\olB}c+R_{\olB}(D_{\olB}+e)d+(\oV+1-\frac{1}{2}\olB^2)n+G_{\olB}c_1(E)\overline{\chi},\\
\Im Z_{\overline{V}, \oo}(E)&=c+(D_\oo+e)d-n\oo\cdot\olB+G_\oo\overline{\psi}c_1(E).
\end{split}
\]
Using $\begin{pmatrix}
		1 & -\frac{R_B(D_B+e)}{D_\omega+e}\\ 
		0 & 1
	\end{pmatrix}$ we can cancel the term involving $d$ in $\Re Z_{V, \omega, B}(\STt(E))$, 
then the coefficient of $s$ and $d$ implies that 
 \[
 g=\begin{pmatrix}
		1 & 0\\ 
		0 & \frac{D_\omega+e}{D_\oo+e}
	\end{pmatrix}.
 \]
 Then coefficient of $c$ implies that 
 \[
 D_\omega=-\frac{D_\oo}{D_\oo+1}.
 \]
As a special case of solving $\omega$ and $B$ from $\oo$, $\olB$, we have the following Proposition. 
\begin{prop}
\label{prop:cencharfor}
Assume that $\overline{\psi}=0$, then $\psi=0$. Let $g=\begin{pmatrix}
		1 & 0\\ 
		0 & \frac{1}{D_\oo+1}
	\end{pmatrix}.$ 

When $t=-1$, we have 
\[
Z_{V, \omega, 0}(\STt(E))=g\cdot Z_{V, \oo, 0}(E)
\]

When 
$D_\oo=0$, we have
\[
Z_{V, \omega, -(t+1)C}(\STt(E))=g\cdot Z_{V, \omega, 0}(E)
\]

When $t= -1$ 
$D_\oo=0$, and $\overline{B}=\overline{u}C$.
\[
Z_{V, \omega, uC}(\STt(E))=g\cdot Z_{V, \omega, \overline{u}C}(E)
\]
where $u=-\overline{u}$.
\end{prop}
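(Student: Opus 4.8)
The plan is to check the identity (\ref{eq:centralchargefor}) in each of the three cases by a direct comparison of real and imaginary parts, using the explicit expressions for $\Re Z$ and $\Im Z$ computed above. Because the stated $g=\begin{pmatrix} 1 & 0\\ 0 & \frac{1}{D_\oo+1}\end{pmatrix}$ is diagonal, it fixes the real coordinate and scales the imaginary one, so (\ref{eq:centralchargefor}) is equivalent to the two scalar identities
\[
\Re Z_{V,\omega,B}(\STt(E)) = \Re Z_{\overline V,\oo,\olB}(E), \qquad \Im Z_{V,\omega,B}(\STt(E)) = \tfrac{1}{D_\oo+1}\,\Im Z_{\overline V,\oo,\olB}(E),
\]
which I would verify as polynomial identities in the invariants $n,c,d,s$ and $c_1(E)$.

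First I would record the coefficient matching that is already essentially in place. The relation $D_\omega=-\frac{D_\oo}{D_\oo+1}$ gives $D_\omega+1=\frac{1}{D_\oo+1}$ and $D_\omega+e=\frac{D_\oo+e}{D_\oo+1}$; the first makes the coefficients of $c$ in the imaginary parts agree, the second those of $d$, and together they reduce $\frac{D_\omega+e}{D_\oo+e}$ to $\frac{1}{D_\oo+1}$, giving the displayed $g$. In all three cases $B$ is a multiple of $C$, so $D_B+e=0$ and the real $d$-coefficient $R_B(D_B+e)$ vanishes on both sides; hence the shear used in the general computation is trivial here and the real coefficients of $s$, $c$, $d$ match directly. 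Finally the coefficient of $c_1(E)$ in the imaginary part forces $G_\omega\psi=\frac{1}{D_\oo+1}G_\oo\overline\psi$, so the hypothesis $\overline\psi=0$ indeed yields $\psi=0$, while the real transverse term $R_B G_B\,\chi\,c_1(E)$ vanishes because $G_B=0$ for $B$ a multiple of $C$.

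The crux is then the coefficient of $n$, which gives one condition from each part:
\[
(t+1)^2 - R_B D_B(t+1) + V + 1 - \tfrac12 B^2 = \overline V + 1 - \tfrac12 \olB^2, \qquad D_\omega(t+1) + \omega\cdot B = \tfrac{1}{D_\oo+1}\,\oo\cdot\olB.
\]
I would verify these case by case using the two identities $\omega\cdot C=D_\omega$ (from $C^2=-e$, $C\cdot D=1$) and $B^2=-e\,R_B^2$ for $B$ a multiple of $C$. In Case 1 ($t=-1$, $B=\olB=0$, $\overline V=V$) both conditions collapse to $V+1=V+1$ and $0=0$. In Cases 2 and 3 the hypothesis $D_\oo=0$ forces $D_\omega=0$, hence $\oo=\omega$ and $\omega\cdot C=0$; the imaginary condition then reads $\omega\cdot B=0$, which holds because $\omega\cdot C=D_\omega=0$ and $B\propto C$, while in the real condition the three $(t+1)$-quadratic contributions $(t+1)^2$, $-R_B D_B(t+1)$, and $-\tfrac12 B^2$ cancel (for Case 2, to $(t+1)^2-2(t+1)^2+(t+1)^2=0$), leaving $\overline V=V$. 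Case 3 is the common specialization $t=-1$, $D_\oo=0$, where $g$ is the identity and $u=-\overline u$ gives $B^2=\olB^2$, so the real condition again reduces to $V=\overline V$.

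I expect the only real difficulty to be bookkeeping: ensuring that the quadratic-in-$(t+1)$ terms, which enter the real $n$-coefficient from three separate sources (the Mukai shift $s(\STt(E))=s+(c-n(t+1))(t+1)$, the product $R_B D_B(t+1)$, and $-\tfrac12 B^2$), cancel exactly, and that the sign convention $u=-\overline u$ is precisely the one making $B^2=\olB^2$ so that $V$ is preserved. There is no conceptual obstacle beyond keeping the intersection data $e=2$, $C^2=-2$, $C\cdot D=1$, $D^2=0$ straight; the content of the proposition is that matching the $n$-coefficient simultaneously pins down the relation between $B$ and $\olB$ and confirms $\overline V=V$.
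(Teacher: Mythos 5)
Your proposal is correct and is essentially the paper's own proof: the paper disposes of this proposition in one line ("direct computation by comparing the coefficients of $c$ and $n$"), and your coefficient-by-coefficient verification, using $D_\omega=-\tfrac{D_{\overline{\omega}}}{D_{\overline{\omega}}+1}$, the vanishing of the $d$- and $\chi$-terms for $B$ proportional to $C$, and the cancellation of the $(t+1)$-quadratic contributions in the $n$-coefficient, is exactly that computation carried out in full. One small misattribution to fix: in the third case the sign convention $u=-\overline{u}$ is forced by the real coefficient of $c$ (on the twisted side it is $R_B(D_B+1)-(t+1)=-u$, which must equal $R_{\overline{B}}=\overline{u}$), not by the requirement $B^2=\overline{B}^2$, which holds for either choice of sign.
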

\begin{proof}
This follows from direct computation by comparing the coefficients of $c$ and $n$. 
\end{proof}

\subsection{Correspondence of hearts}
In this subsection, we check that the correspondence of hearts under $\STk$. 
The following result is stated and used in \cite{tramel2017bridgeland}, we give the proof here just for completion. 
\begin{lem}
\label{lem:Ck}
Let $\mC:=\langle\cO_C(j)\vert j\in \mathbb{Z}\rangle$. Assume that $E\in \mC$, and $\Hom(\cO_C(k), E)=0$, then 
$E\in \mC_{k-1}$.
\end{lem}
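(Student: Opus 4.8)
The plan is to exploit the structure of the category $\mC = \langle \cO_C(j) \mid j \in \ZZ\rangle$, which consists of objects filtered by the line bundles $\cO_C(j)$ on the rational curve $C$. Since $C$ is a smooth rational curve with $C^2 = -2$, the sheaves $\cO_C(j)$ behave like the simple objects of a (shifted) tube-like category, and the key cohomological facts are the standard ones on $\PP^1 \cong C$: $\Hom(\cO_C(a),\cO_C(b)) = H^0(\cO_C(b-a))$, which is nonzero exactly when $b \geq a$, and $\Ext^1(\cO_C(a),\cO_C(b)) = H^1(\cO_C(b-a))$, which is nonzero exactly when $b - a \leq -2$. I would first record these vanishing statements, since they govern which extensions among the $\cO_C(j)$ can occur.

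\textbf{Main argument.}
First I would reduce to understanding the Jordan--H\"older factors of $E$ in $\mC$. Every object $E \in \mC$ admits a finite filtration whose successive quotients are among the $\cO_C(j)$; let $m$ be the largest index $j$ such that $\cO_C(j)$ appears as a subquotient. The goal is to show $m \leq k-1$, i.e.\ that $\cO_C(k)$ and no higher twist occurs. I would argue by showing that if some $\cO_C(j)$ with $j \geq k$ appears, then $\Hom(\cO_C(k), E) \neq 0$, contradicting the hypothesis. Concretely, I would examine the top of the filtration: the quotient of $E$ by the subobject generated by all lower factors yields a map from $E$ onto some $\cO_C(m)$ with $m$ maximal. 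If $m \geq k$, then $\Hom(\cO_C(k), \cO_C(m)) = H^0(\cO_C(m-k)) \neq 0$ since $m - k \geq 0$, and the crucial point is to lift such a map through the quotient $E \twoheadrightarrow \cO_C(m)$ to a nonzero element of $\Hom(\cO_C(k), E)$. That lifting requires controlling the obstruction in $\Ext^1(\cO_C(k), -)$ against the lower-indexed factors of the filtration.

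\textbf{The main obstacle.}
The delicate step is precisely this lifting: a priori the composite $\cO_C(k) \to \cO_C(m) $ need not factor through $E$, and one must rule out that the map dies due to extension data lower in the filtration. The clean way to handle this is to set up the argument in the other direction, filtering $E$ so that $\cO_C(m)$ sits as a \emph{subobject} rather than a quotient, or to induct on the length of the filtration while tracking the long exact sequence associated to $0 \to E' \to E \to \cO_C(m) \to 0$. Applying $\Hom(\cO_C(k), -)$ gives
\[
0 \to \Hom(\cO_C(k), E') \to \Hom(\cO_C(k), E) \to \Hom(\cO_C(k), \cO_C(m)) \to \Ext^1(\cO_C(k), E'),
\]
and one wants to show the map $\Hom(\cO_C(k),E) \to \Hom(\cO_C(k),\cO_C(m))$ is surjective (so a nonzero element upstairs forces a nonzero element in $E$), which follows once the relevant $\Ext^1(\cO_C(k), E')$ contribution is shown not to absorb everything. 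I expect this bookkeeping on the filtration, together with careful choice of which factor to split off first, to be the technical heart; the cohomology vanishing on $\PP^1$ is routine, but arranging the filtration so that the maximal-index factor is accessible and its $\Hom$ genuinely detects in $E$ is where the argument must be done with care.
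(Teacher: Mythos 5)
Your strategy has a genuine gap, and it is not a bookkeeping issue that more care could repair: the central reduction --- ``if some $\cO_C(j)$ with $j \geq k$ appears as a subquotient of a filtration of $E$, then $\Hom(\cO_C(k), E) \neq 0$'' --- is false, because filtrations of objects of $\mC$ by the sheaves $\cO_C(j)$ are far from unique, and the set of factors occurring is not an invariant of $E$. Concretely, take $E = \cO_C(k-1) \oplus \cO_C(k-2)$. Choosing sections $t_1 \in H^0(\cO_{\PP^1}(1))$ and $t_2 \in H^0(\cO_{\PP^1}(2))$ with no common zero gives a surjection $E \twoheadrightarrow \cO_C(k)$ whose kernel is $\cO_C(k-3)$, so $\cO_C(k)$ occurs as the top factor of a filtration of $E$; yet $\Hom(\cO_C(k), E) = H^0(\cO_{\PP^1}(-1)) \oplus H^0(\cO_{\PP^1}(-2)) = 0$, and of course $E \in \mC_{k-1}$. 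In this example the connecting map $\Hom(\cO_C(k), \cO_C(k)) \to \Ext^1(\cO_C(k), \cO_C(k-3))$ is injective, i.e.\ the $\Ext^1$ obstruction you hope to show ``does not absorb everything'' genuinely absorbs everything, and no choice of which factor to split off first can avoid this: the statement to be proved is existential (\emph{some} filtration with factors of degree $\leq k-1$ exists), not universal (every filtration has only such factors). Equivalently, the hypothesis $\Hom(\cO_C(k), E) = 0$ is inherited by subobjects of $E$ but not by quotients, so any induction that strips off top factors and expects the hypothesis to persist will break. A secondary error: your $\Ext^1$ formula is the one on $\PP^1$; on the surface $X$ there is an additional normal-bundle contribution, $\dim \Ext^1_X(\cO_C(a), \cO_C(b)) = h^1(\cO_{\PP^1}(b-a)) + h^0(\cO_{\PP^1}(b-a-2))$, which is nonzero whenever $|b-a| \geq 2$.

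The paper's proof avoids this trap by arguing geometrically rather than through an abstract filtration, inducting on the multiplicity $n$ in $\ch_1(E) = nC$. Writing $i \colon C \hookrightarrow X$, one has $i_*i^*E \simeq F \oplus Q$ with $F = \oplus_p\, \cO_C(n_p)$ nonzero and $Q$ zero-dimensional, and the canonical surjection $E \to F$ gives $0 \to G \to E \to F \to 0$. If $G = 0$, the hypothesis directly forces all $n_p \leq k-1$. If $G \neq 0$, then $G \subset E$ inherits the Hom-vanishing, so induction gives $G \in \mC_{k-1}$ and hence $i^*G$ has a line-bundle summand of degree $\leq k-1$; the right-exact sequence $L^{-1}i^*F \to i^*G \to i^*Q \to 0$, together with $L^{-1}i^*F \simeq \oplus_p\, \cO_C(n_p + 2)$ (the conormal twist, using $C^2 = -2$), then produces an index $p_0$ with $n_{p_0} + 2 \leq k-1$. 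Splitting off the quotient $E \twoheadrightarrow \cO_C(n_{p_0})$, whose kernel is a subobject (so again inherits the hypothesis) of smaller multiplicity, closes the induction. The idea your proposal is missing is precisely this use of restriction to $C$: the splitting type of $i^*E$ gives a canonical, geometrically controlled choice of low-degree quotient factor, replacing the ill-defined ``maximal factor'' of an abstract filtration.
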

\begin{proof}
We proceed by induction. Let $E\in \mC$, then $\ch_1(E)=nC$ for some $n\in \mathbb{Z}$. 

The statement is obviously true when $n=1$. Now we assume that the statement is true for $n\leq l$. 
Assume that $\ch_1(E)=(l+1)C$. 
Let $i:C\hookrightarrow X$. Note that $i_*i^*E\simeq F\oplus Q$ where  $0\neq F\simeq \oplus_p \cO_C(n_p)$ and $Q$ is supported in dimension $0$. We take $E\to F$ and complete it into a s.e.s. in $\Coh(X)$:
\[
0\to G\to E\to F\to 0.
\]
If $G=0$, then $E\simeq F$, then $\Hom(\cO_C(k), E)=0$ implies $n_p\leq k-1$ for all $p$. Hence $E\in \mC_{k-1}$. From now on we assume $G\neq 0$. Since $i^*E\simeq i^*i_*i^*E$, we have 
\begin{equation}
\label{eq:5.1surj}
    L^{-1}i^*(F)\rightarrow i^*G\to i^*Q\to 0.
\end{equation}
 Since $\Hom(\cO_{C}(k), G)=0$, and $\ch_1(G)= rC$ for some $r\leq l$, the induction hypothesis implies that $G\in \mC_{k-1}$. 
Then there exists a s.e.s. 
\[
0\to G_1\to G\to \cO_C(q)\to 0
\]
for some $q\leq k-1$. 
Hence if we write $i^*G\simeq \oplus_i\cO_C(m_i)\oplus R$ where $R$ supported in dimension $0$, then there exists $i_0$ such that $m_{i_0}\leq k-1$.
Direct computation shows that $L^{-1}i^*(F)\simeq \oplus\cO_C(n_p+2)$.
Then equation \ref{eq:5.1surj} implies that there exists $p_0$ such that $n_{p_0}+2\leq k-1$. 
Take the s.e.s.
\[
0\to K\to E\to \cO_C(n_{p_0})\to 0,
\]
the induction hypothesis implies that $K\in \mC_{k-1}$, and hence $E\in \mC_{k-1}$.

\end{proof}
We abbreviate notation and denote $\STk$ by $\ST$.
 \begin{thm} 
\label{thm:corrheart}
Using the above notation, we have $\ST(\B_{\nu, -2})=\A_{\nu, -2}$.
\end{thm}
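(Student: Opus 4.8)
The plan is to prove the two inclusions at the level of hearts and then collapse one of them to an equality. Concretely, I would show $\ST(\B_{\nu,-2})\subseteq\A_{\nu,-2}$ and then invoke the standard fact that two hearts of bounded t-structures on the same triangulated category, one contained in the other, must coincide; since $\ST=\STk$ is an autoequivalence, $\ST(\B_{\nu,-2})$ is automatically the heart of a bounded t-structure, so this reduction is legitimate. The first, easy ingredient is the action of $\ST$ on the torsion class $\T_C=\langle\cO_C(-1)\rangle$: because $\cO_C(-1)$ is spherical on the K3 surface $X$, the Seidel--Thomas identity \cite{ST01} gives $\ST(\cO_C(-1))\simeq\cO_C(-1)[-1]$, and since $\cO_C(-1)$ is rigid the category $\T_C$ is semisimple, so $\ST(\T_C)=\T_C[-1]\subseteq\A_{\nu,-2}$.

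The core step is to show $\ST(F)\in\A_{\nu,-2}$ for every $F\in\F_C$. For this I would exploit the defining triangle $\RHom(\cO_C(-1),F)\otimes\cO_C(-1)\to F\to\ST(F)\to[1]$. The hypothesis $F\in\F_C$ means $\Hom(\cO_C(-1),F)=0$, so the complex $R:=\RHom(\cO_C(-1),F)\otimes\cO_C(-1)$ has no term in degree zero; writing $a=\dim\Ext^1(\cO_C(-1),F)$ and $b=\dim\Ext^2(\cO_C(-1),F)$, rotation of the triangle gives $F\to\ST(F)\to R[1]$ with $R[1]\simeq\cO_C(-1)^{\oplus a}\oplus\cO_C(-1)[-1]^{\oplus b}$. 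Now I would take cohomology with respect to the t-structure whose heart is $\A_{\nu,-2}$, noting that $\cO_C(-1)[-1]$ lies in this heart while $\cO_C(-1)$ sits in degree $-1$. The long exact sequence then collapses to $0\to H^{-1}_{\A}(\ST F)\to\cO_C(-1)[-1]^{\oplus a}\xrightarrow{\delta}F\to H^0_{\A}(\ST F)\to\cO_C(-1)[-1]^{\oplus b}\to 0$, with all remaining $\A$-cohomologies vanishing; crucially the vanishing of the degree-zero part of $R$ (i.e.\ $F\in\F_C$) is exactly what removes a would-be $H^{-2}_{\A}$ contribution. Hence $\ST(F)\in\A_{\nu,-2}$ if and only if the connecting map $\delta$ is a monomorphism.

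The map $\delta$ is the tautological evaluation whose $a$ components form a basis of $\Hom_{\A_{\nu,-2}}(\cO_C(-1)[-1],F)=\Ext^1(\cO_C(-1),F)$, and injectivity is the main obstacle, so I would prove it by showing $S:=\cO_C(-1)[-1]$ is a simple object of $\A_{\nu,-2}$. This follows from the central-charge argument: $Z_{V,\nu}(S)=0$, so any sub- or quotient object of $S$ lies in $\ker Z_{V,\nu}\cap\A_{\nu,-2}=\langle S\rangle$, and since $\Ext^1(\cO_C(-1),\cO_C(-1))=0$ this category is semisimple with $\End(S)=\C$, forcing $S$ to be simple. Simplicity makes $S^{\oplus a}$ semisimple, so any map out of it with linearly independent components is injective, giving $\ker\delta=0$. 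With $\ST(\T_C)$ and $\ST(\F_C)$ both inside $\A_{\nu,-2}$, I would finish as follows: any $E\in\B_{\nu,-2}$ fits in $0\to T\to E\to F\to 0$ with $T\in\T_C$, $F\in\F_C$ by Proposition~\ref{prop:torsionpair2nef}; applying the exact functor $\ST$ produces a triangle whose two outer vertices lie in the heart $\A_{\nu,-2}$, and closure of a heart under extensions forces the middle term $\ST(E)$ into $\A_{\nu,-2}$ as well. This yields $\ST(\B_{\nu,-2})\subseteq\A_{\nu,-2}$, and the containment lemma upgrades it to the desired equality.
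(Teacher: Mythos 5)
Your proposal is correct, but it takes a genuinely different route from the paper's proof. The two arguments share only the opening reduction: since $\ST$ is an autoequivalence, $\ST(\B_{\nu,-2})$ is again the heart of a bounded t-structure, so the inclusion $\ST(\B_{\nu,-2})\subseteq\A_{\nu,-2}$ suffices. After that they diverge. The paper splits $E\in\B_{\nu,-2}$ by \emph{sheaf} cohomology, $0\to H^{-1}(E)[1]\to E\to H^0(E)\to 0$, and in the two cases $G\in\T_{\nu,-2}$ and $G=F[1]$ with $F\in\F_{\nu,-2}$ computes the sheaf cohomologies of $\ST(G)$ from the long exact sequence of the twist triangle, checking membership in $\A_{\nu,-2}$ against its construction as a tilt; this is where Lemma~\ref{lem:Ck}, the injectivity argument of Proposition~\ref{prop:torsionpair2nef}, and the vanishing $\Hom(\cO_C(-1),\ST(G))=\Hom(\cO_C(-1)[1],G)=0$ do the work. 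You instead split $E$ by the torsion pair $(\T_C,\F_C)$ of Proposition~\ref{prop:torsionpair2nef}, dispose of $\T_C$ using $\ST(\cO_C(-1))\simeq\cO_C(-1)[-1]$, and for $F\in\F_C$ take cohomology of the twist triangle with respect to the \emph{target} heart $\A_{\nu,-2}$, reducing the whole theorem to injectivity of the evaluation map $\delta:\cO_C(-1)[-1]^{\oplus a}\to F$; you obtain that from simplicity of $S=\cO_C(-1)[-1]$ in $\A_{\nu,-2}$, which is correctly deduced from the kernel computation $\ker(Z_{V,\nu})\cap\A_{\nu,-2}=\langle S\rangle$ and the fact that $Z_{V,\nu}$ is a weak stability function on $\A_{\nu,-2}$, both recorded in Section 3. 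Your route is shorter and more conceptual: it never leaves the t-structure framework, avoids Lemma~\ref{lem:Ck} and the sheaf-level case analysis entirely, and would transport verbatim to any spherical object for which the analogous torsion pair, kernel computation, and positivity statement are available. What it yields less of is information: the paper's computation outputs the explicit sheaf cohomologies of $\ST(E)$ in each case, and that explicit description is recycled almost word-for-word in the proof of Theorem~\ref{thm:corrheartnua}, whereas your argument certifies $\ST(E)\in\A_{\nu,-2}$ without describing $\ST(E)$. If you keep your version, spell out two small points: that the components of $\delta$ are genuine morphisms in the heart because $\Hom_{\A_{\nu,-2}}(S,F)=\Hom_{D^b(X)}(S,F)=\Ext^1(\cO_C(-1),F)$, and the standard fact that every subobject of $S^{\oplus a}$, with $S$ simple, is a direct sum of copies of $S$, so that a nonzero $\ker\delta$ really does produce a nonzero vector $(c_i)$ with $\sum_i c_ie_i=0$, contradicting linear independence of the $e_i$.
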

\begin{proof}
Since $\B_{\nu, -2}$ and $\A_{\nu, -2}$ are both hearts of t-structures in $D^b(X)$, it is enough to show that $\ST(\B_{\nu, -2})\subseteq \A_{\nu, -2}$.

 We first consider the case when $G\in \T_{\nu, -2}$. Denote dim $\Ext^i(\cO_C(-1), G)$ by $n_i$. Then by the definition of spherical twist and taking l.e.s. of cohomology, we have 
\[
\begin{split}
0\to H^{-1}(\ST(G))\to \cO_C(-1)^{n_0}&\to G\to H^0(\ST(G))\to \cO_C(-1)^{n_1}\to 0,\\
H^1(\ST(G))&\simeq \cO_C(-1)^{n_2}.
\end{split}
\]
Since $G\in \T_{\nu, -2}$, any quotient of $G$ in $\Coh(X)$ is also in $\T_{\nu, -2}$, hence $H^0(\ST(G))\in \T_{\nu, -2}\subseteq \B_{\nu, -2}$. By definition of $n_0$, we have $\Hom(\cO_{C}(-1), H^{-1}(\ST(G)))=0$. Since $H^{-1}(\ST(G))$ is a subsheaf of $\cO_{C}(-1)^{n_0}$, Lemma \ref{lem:Ck} implies that $H^{-1}(\ST(G))\in \mC_{-2}$. Then $\ST(G)^{\leq 0}\in \B_{\nu, -2}$.
Note that $\ST(G)$ fits into an exact triangle
\[
\ST(G)^{\leq 0}\to \ST(G)\to H^1(\ST(G))[-1]\xrightarrow{+1}.
\]

Applying $\Hom(\cO_C(-1), \_)$, we have $\Hom^{-1}(\cO_C(-1), H^1(\ST(G))[-1])=0$, and also
\[
\Hom(\cO_C(-1), \ST(G))=\Hom(\ST^{-1}(\cO_C(-1)), G)=\Hom(\cO_C(-1)[1], G)=0.
\]
This implies that $\Hom(\cO_C(-1), \ST(G)^{\leq 0})=0$, and hence $\ST(G)^{\leq 0}\in \F_C$. Also, the l.e.s. of cohomology implies that $H^1(\ST(G))[-1]\in \T_C[-1]$, hence we have $\ST(G)\in \A_{\nu, -2}$.

Next we consider when $G=F[1]\in \F_{\nu, -2}[1]$. 
The l.e.s. of cohomology becomes
\[
\begin{split}
0\to F\to H^{-1}(\ST(G))&\to \cO_C(-1)^{n_0}\to 0\\
H^0(\ST(G))&\simeq \cO_C(-1)^{n_1}.
\end{split}
\]
Then $H^0(\ST(G))\in\B_{\nu, -2}$. 
Consider $H^{-1}(\ST(G))$. Note that the s.e.s in $\Coh(X)$ above gives an exact triangle 
\[
\cO_{C}(-1)^{n_0}\xrightarrow{\iota} F[1]\to H^{-1}(\ST(G))[1]\xrightarrow{+1},
\]
where $\cO_{C}(-1)^{n_0}$and $F[1]$ are objects in $\B_{\nu, -2}$. Then the same argument in Proposition \ref{prop:torsionpair2nef} implies that $\iota$ is injective in $\B_{\nu, -2}$. 
This implies that $H^{-1}(\ST(G))[1]\in \B_{\nu, -2}$.
Next we compute
\[
\Hom(\cO_C(-1), \ST(G))=\Hom(\ST^{-1}(\cO_C(-1)), G)=\Hom(\cO_C(-1)[1], G).
\]
Since $F=G[-1]\in \langle\mC_{-2}, \F_\nu^0\rangle$, we have $\Hom(\cO_C(-1)[1], F[1])=0$. 
Hence $\Hom(\cO_C(-1), \ST(G))=0$ and $\ST(G)\in \F_{C}\subseteq \A_{\nu, -2}$.

Since any object $E\in \B_{\nu, -2}$ fits into a s.e.s. 
\[
0\to H^{-1}(E)[1]\to E\to H^0(E)\to 0,
\]
the above discussion showed that $\ST(H^{-1}(E)[1])\in \A_{\nu, -2}$ and $\ST(H^0(E))\in\A_{\nu, -2}$, hence $\ST(E)\in \A_{\nu, -2}$.
\end{proof}

\begin{rem}
Note that the stability conditions $\sigma^b_{V, \nu}$ and $\sigma^a_{V, \nu}$ are indeed different weak stability conditions. By the equivalent definition of slicing, it is enough to show there exists object in $\cP_b(\phi)$ for some $\phi$, that is not semistable in $\sigma^a_{V, \nu}$. Consider $\cO_C(t)$ for $t\geq 0$. Since such objects have maximal phase in $\B_{\nu, -2}$, we have $\cO_C(t)\in \cP_b(1)$ for any $t\geq 0$. On the other hand, if $\cO_C(t)[i]\in \A_{\nu, -2}$ for some $i$. Then it fits into a s.e.s. 
\[
0\to F\to \cO_C(t)[i]\to T\to 0,
\]
where $F\in \F_C$ and $T\in \T_C[-1]$. Then the l.e.s. of cohomology implies that $H^{-1}(F)\simeq H^{-1}(\cO_C(t)[i])$, $H^{0}(F)\simeq H^{0}(\cO_C(t)[i])$ and $H^1(\cO_C(t)[i])\simeq H^1(T)$. This forces $H^1(\cO_C(t)[i])=0$ and hence $T=0$. But since $\cO_C(t)[i]\notin \F_C$ for any $i$, we have $\cO_C(t)[i]\notin \A_{\nu, -2}$ for any $i$. Hence the two weak stability conditions are not the same. 
\end{rem}

\section{Bridgeland stability condition under $\ST_{\cO_C(-1)}$}

Let $X$ be a K3 surface, $\nu_a\in \NS_\mathbb{Q}(X)$ be an arbitrary ample class.  Let $\sigma_{V, \nu_a}=(Z_{V, \nu_a}, \Coh^{\nu_a})$ be the standard Bridgeland stability condition associated to $\nu_a$. In this Section, we study the Bridgeland stability condition $\ST_{\cO_C(-1)}(\sigma_{V, \nu_a})$. 
A particularly interesting example is as follows. Let $C$ and $\nu$ be as in the previous Section, taking $D=\frac{1}{e}(\nu-C)$, then $\nu$ can be written as $\nu=C+eD$ with $D\cdot C=1$. Assume $D$ is a nef divisor. Let $\nu_a=\nu+aD$. If $a>0$, $\nu_a$ is ample. If $a<0$, then $\nu_a\cdot C<0$, hence $\nu_a$ is not nef. Then the construction in this Section in particular constructs a Bridgeland stability condition associated to a non-nef divisor. This is our original motivation, so we keep the notation of the ample class to be $\nu_a$, but the construction in this Section works for an arbitrary ample class in $\NS_{\mathbb{Q}}(X)$ such that there exists a solution to the central charge formula \ref{eq:centralchargefor} in Section 4.

Denoting the standard torsion pair in $\Coh(X)$ w.r.t. the ample class $\nu_a$ by $(\T^0_{\nu_a}, \F^0_{\nu_a})$.  
We denote the standard tilted heart by $\Coh^{\nu_a}:=\langle\F^0_{\nu_a}[1], \T^0_{\nu_a}\rangle$. Define the set 
\[
S^a_{k}:\{E\in \Coh^{\nu_a}\vert E\hookrightarrow \cO_C(k)\}.
\]
Let $\mC^a_{k}$ be $\langle E\in S^a_{k}\rangle$.
Let
 \[
\begin{split}
\T^1_{\nu_a, k}&=\{T\in \Coh^{\nu_a}\vert \Hom(T, \mC_k^a)=0\}\\
\F^1_{\nu_a, k}&=\mC^a_{k}.
\end{split}
\]
Using standard argument, we have the following simple Lemma. 
\begin{lem}
\label{lem:subclose}
Let $\A$ be an abelian category, and $\A'$ a subcategory of $\A$ closed under extension. Let 
\[
0\to E_1\to E\to E_2\to 0
\]
be a s.e.s. in $\A$. 

(i) Assume any subobject of $E_i$ is an objects in $\A'$, then any subobject of $E$ is also an object in $\A'$.

(ii) Assume any quotient of $E_i$ is an objects in $\A'$, then any quotient object of $E$ is also an object in $\A'$.
\end{lem}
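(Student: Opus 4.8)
The plan is to prove both parts by the same device: intersect (respectively project) the given subobject (respectively quotient) of $E$ against the two terms $E_1$ and $E_2$ of the short exact sequence, produce a short exact sequence whose outer terms are controlled by the hypotheses, and then invoke closure of $\A'$ under extensions. I would phrase everything in terms of the categorical operations (image, kernel, cokernel, and intersection as a pullback) so that the argument is valid in an arbitrary abelian category rather than relying on elements.

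For part (i), let $F\hookrightarrow E$ be an arbitrary subobject. Composing with the quotient $E\twoheadrightarrow E_2$ gives a morphism $F\to E_2$; its image $F_2:=\image(F\to E_2)$ is a subobject of $E_2$, and its kernel is the pullback $F\cap E_1$, which is a subobject of $E_1$. By hypothesis both $F_2$ and $F\cap E_1$ lie in $\A'$. Since they sit in the short exact sequence
\[
0\to F\cap E_1\to F\to F_2\to 0,
\]
closure of $\A'$ under extensions forces $F\in\A'$.

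Part (ii) is the dual statement, and I would run the mirror-image argument. Given a quotient $E\twoheadrightarrow Q$, set $Q_1:=\image(E_1\to Q)$, which is a quotient of $E_1$. The composite $E\to Q\to Q/Q_1$ kills $E_1$, hence factors through $E_2=E/E_1$ as a surjection, so $Q/Q_1$ is a quotient of $E_2$. By hypothesis both $Q_1$ and $Q/Q_1$ lie in $\A'$, and the short exact sequence
\[
0\to Q_1\to Q\to Q/Q_1\to 0
\]
together with closure under extensions yields $Q\in\A'$.

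I do not expect a serious obstacle here; the only point requiring a little care is to justify the two key identifications without elements—namely that the kernel of the composite $F\to E_2$ is exactly the pullback $F\cap E_1$ (because $E_1\hookrightarrow E$ is the kernel of $E\twoheadrightarrow E_2$, and kernels pull back along monomorphisms), and dually that $E\to Q/Q_1$ genuinely factors through $E_2$ as an epimorphism. Both follow from the universal properties of kernel and cokernel and from the stability of monomorphisms and epimorphisms under pullback and pushout in an abelian category, so no element-level diagram chase is needed.
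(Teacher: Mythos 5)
Your proof is correct and takes essentially the same approach as the paper's: for (i) the paper likewise forms the pullback $K_1 = K\cap E_1$, identifies it as the kernel of $K\to E_2$ so that the quotient $K/K_1$ embeds in $E_2$, and concludes by closure under extension. The only cosmetic difference is in (ii), where the paper simply appeals to passing to the dual category while you write out the dual argument explicitly.
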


\begin{prop}
Let $\nu_a$ be an ample class in $\NS_\mathbb{Q}(X)$. The pair $(\T^1_{\nu_a, k}, \F^1_{\nu_a, k})$ defines a torsion pair on $\Coh^{\nu_a}$ for all $k\in \mathbb{Z}$.  
\end{prop}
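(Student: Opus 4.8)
The plan is to verify the two defining properties of a torsion pair on $\Coh^{\nu_a}$: the orthogonality $\Hom(\T^1_{\nu_a,k},\F^1_{\nu_a,k})=0$, and the existence, for every $E\in\Coh^{\nu_a}$, of a short exact sequence $0\to T\to E\to F\to 0$ with $T\in\T^1_{\nu_a,k}$ and $F\in\F^1_{\nu_a,k}=\mC^a_k$. The orthogonality is essentially built into the definition, since $\T^1_{\nu_a,k}$ consists exactly of those $T$ with $\Hom(T,\mC^a_k)=0$; because $\Hom(T,-)$ is left exact, it suffices that this vanishing hold on the generators $S^a_k$ of the extension-closed category $\mC^a_k$, which is automatic. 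Before constructing the decomposition I would first record the structural fact that $\mC^a_k$ is closed under subobjects in $\Coh^{\nu_a}$: any subobject of an object of $S^a_k$ is again a subobject of $\cO_C(k)$, hence lies in $S^a_k\subseteq\mC^a_k$, and then Lemma \ref{lem:subclose}(i) together with an induction on the length of an extension presentation shows that every subobject of an arbitrary object of $\mC^a_k$ again lies in $\mC^a_k$.

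For the decomposition I would produce $T$ as the maximal subobject of $E$ lying in $\T^1_{\nu_a,k}$. Such a maximal object exists because $\Coh^{\nu_a}$ is Noetherian and $\T^1_{\nu_a,k}$ is closed under quotients, finite direct sums and extensions (all immediate from the left exactness of $\Hom(-,G)$ for $G\in\mC^a_k$): one sets $T$ equal to the sum of all $\T^1_{\nu_a,k}$-subobjects of $E$, which by the ascending chain condition is a finite sum, hence a quotient of a finite direct sum of objects of $\T^1_{\nu_a,k}$, and therefore itself lies in $\T^1_{\nu_a,k}$. By maximality, the quotient $F:=E/T$ contains no nonzero subobject belonging to $\T^1_{\nu_a,k}$: the preimage in $E$ of such a subobject would be an extension of a $\T^1_{\nu_a,k}$-object by $T\in\T^1_{\nu_a,k}$, hence in $\T^1_{\nu_a,k}$ and thus contained in $T$, forcing the subobject to vanish.

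It remains to show $F\in\mC^a_k$, and this is the step I expect to be the main obstacle. The plan is to peel off $\mC^a_k$-quotients of $F$ one at a time: since $F$ has no nonzero $\T^1_{\nu_a,k}$-subobject, if $F\neq 0$ then $F\notin\T^1_{\nu_a,k}$, so $\Hom(F,\mC^a_k)\neq 0$; taking the image of a nonzero map and using that $\mC^a_k$ is closed under subobjects produces a nonzero quotient $F\twoheadrightarrow F_1$ with $F_1\in S^a_k$, i.e. $F_1\hookrightarrow\cO_C(k)$. Replacing $F$ by $\ker(F\to F_1)$, which still has no $\T^1_{\nu_a,k}$-subobject as a subobject of $F$, and iterating realizes $F$ as a successive extension of objects of $S^a_k$ together with a final kernel that is forced to be a $\T^1_{\nu_a,k}$-subobject of $F$, hence zero; this exhibits $F\in\mC^a_k$ and completes the proof. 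The delicate point is the termination of this process: each $F_i$ is a nonzero subsheaf of $\cO_C(k)$, so contributes a fixed positive amount to $\ch_1$ along $[C]$, and I would bound the number of steps using the boundedness of the $C$-supported part of the fixed object $F$, concretely by pairing Chern classes against $D$ (recall $D\cdot C=1$) to see that only finitely many such quotients can be split off. This finiteness is exactly where the geometry of the $(-2)$-curve $C$ enters, through the rigidity of the spherical sheaves $\cO_C(j)$ and the finite-dimensionality of the relevant $\Hom$-spaces.
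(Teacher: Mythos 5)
Your proposal reaches the crux by a different route than the paper, but the crux itself is not handled correctly. The paper does not construct a maximal torsion subobject; it verifies the two hypotheses of the standard criterion for a torsion pair with prescribed torsion-free class: (i) $\F^1_{\nu_a,k}=\mC^a_k$ is closed under subobjects in $\Coh^{\nu_a}$ (via Lemma \ref{lem:subclose}, exactly as you do), and (ii) every descending chain $E\supseteq E_1\supseteq E_2\supseteq\cdots$ in $\Coh^{\nu_a}$ whose successive quotients lie in $\mC^a_k$ stabilizes. Your peeling step is precisely condition (ii) in disguise, so the detour through the maximal $\T^1_{\nu_a,k}$-subobject (which moreover requires noetherianity of $\Coh^{\nu_a}$ as an extra, if known, input) does not let you avoid it: everything hinges on termination, which you yourself flag as the delicate point.

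The genuine gap is in your termination argument, and it comes from a misreading of $S^a_k$. You assert that each peeled-off quotient $F_i$ is ``a nonzero subsheaf of $\cO_C(k)$'' and hence contributes a fixed positive amount to $\ch_1$ along $C$, to be bounded by pairing with $D$. But $S^a_k$ consists of subobjects of $\cO_C(k)$ in the \emph{tilted heart} $\Coh^{\nu_a}$, not subsheaves in $\Coh(X)$: such a subobject $A$ is a sheaf in $\T^0_{\nu_a}$ equipped with a sheaf map $A\to\cO_C(k)$ whose kernel lies in $\F^0_{\nu_a}$ (the kernel reappears as $H^{-1}$ of the quotient in the heart). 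In particular $A$ can have arbitrarily large rank and $\ch_1(A)$ need not be proportional to $C$, so pairing with $D$ gives no bound on the number of steps. (A smaller inaccuracy: the image of a nonzero map $F\to G$ with $G\in\mC^a_k$ lies in $\mC^a_k$ by closure under subobjects, not necessarily in $S^a_k$.) The correct termination argument --- and this is exactly the paper's key step --- pairs against $\nu_a$ instead of $D$: since the quotients are sheaves, $H^{-1}$ is constant along the chain and one may replace each term by its $H^0$, which lies in $\T^0_{\nu_a}$; each nonzero quotient lies in $\mC^a_k\subseteq\T^0_{\nu_a}$ and so has $\ch_1\cdot\nu_a>0$ (note a nonzero object of $\mC^a_k$ cannot be $0$-dimensional, as it would embed into the pure sheaf $\cO_C(k)$), hence the degrees $\ch_1(H^0(E_i))\cdot\nu_a$ strictly decrease; and since $\nu_a\in\NS_{\mathbb{Q}}(X)$ these degrees lie in $\tfrac{1}{N}\mathbb{Z}$ for a fixed $N$ and are bounded below by $0$, so the chain must stop. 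If you substitute this degree argument for your $D$-pairing, your proof closes up.
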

\begin{proof}
The argument is standard. It is enough the show the following two claims. 

(i) $\F^1_{\nu_a, k}$ is closed under taking subobject in $\Coh^{\nu_a}$. 

(ii) If there is a sequence of inclusions $...\hookrightarrow E_i\hookrightarrow E_{i-1}\hookrightarrow...\hookrightarrow E_1\hookrightarrow E$ in $\Coh^{\nu_a}$ with $E_i/E_{i+1}\in \F^1_{\nu_a, k}$ for all $i$, then for $i\gg 0$ we have $E_i\simeq E_{i+1}$.

Claim (i) follows directly from induction on Lemma \ref{lem:subclose}. 

For (ii), assume there is such a sequence of inclusions. Taking long exact sequence of cohomology, we have 
$H^{-1}(E_i)\simeq H^{-1}(E_{i+1})$ for all $i$. So we may assume the $E_i$'s in the sequence are all in $\T^0_{\nu_a}$. Since $\F^1_{\nu_a, k}\subseteq \T^0_{\nu_a}$, we have $E_i/E_{i+1}\in \T^0_{\nu_a}$. Hence $\ch_1(E_i/E_{i+1})\cdot \nu_a>0$. Since 
$\nu_a\in \NS_\mathbb{Q}(X)$, there exists $n$ such that $\ch_1(E_n)\cdot \nu_a=0$ and hence $E_n\notin \T^0_{\nu_a}$. This shows the sequence must terminate.

\end{proof}
Let $\B_{\nu_a, k}$ be the abelian category by tilting $\Coh^{\nu_a}$ at $(\T^1_{\nu_a, k}, \F^1_{\nu_a, k})$, i.e. $\B_{\nu_a, k}=\langle\F^1_{\nu_a, k}[1], \T^1_{\nu_a, k}\rangle$ .  Note that Proposition 10.6 in \cite{CLSY2} implies that $B_{\nu_a, k}$ can be obtained by tilting $\Coh(X)$ at the torsion pair $(\T_{\nu_a, k}, \F_{\nu_a, k})$, where 
	\begin{equation*}
		\begin{split}
			\T_{\nu_a, k}&:=\{E\in \Coh(X)|E\in \T^0_{\nu_a} \text{ and } \Hom_{\Coh(X)}(E, \mC^a_{ k})=0\}\\
			\F_{\nu_a, k}&:=\{E\in \Coh(X)|E\in \langle\mC^a_{k}, \F^0_{\nu_a}\rangle\}.
		\end{split}
	\end{equation*}
\begin{prop}
Let $f: \cO_C(k)[1]\to E$ be a nonzero morphism in $\B_{\nu_a, k}$, then $f$ is injective. 
\end{prop}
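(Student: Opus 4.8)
The plan is to form the kernel--image factorization of $f$ in $\B_{\nu_a,k}$ and show the kernel vanishes. Write $K=\ker f$ and $I=\image f$, giving a short exact sequence $0\to K\to \cO_C(k)[1]\to I\to 0$ in $\B_{\nu_a,k}$. Taking the long exact sequence of cohomology sheaves and using $H^{-1}(\cO_C(k)[1])=\cO_C(k)$ and $H^0(\cO_C(k)[1])=0$, I would obtain $H^0(I)=0$ together with the four--term exact sequence of sheaves $0\to H^{-1}(K)\to \cO_C(k)\xrightarrow{\phi} I'\to H^0(K)\to 0$, where $I':=H^{-1}(I)$. Since $I\in\B_{\nu_a,k}$ and $H^0(I)=0$, we have $I'\in\F^1_{\nu_a,k}=\mC^a_k$ and $I=I'[1]$; moreover $K\in\B_{\nu_a,k}$ forces $H^0(K)\in\T_{\nu_a,k}$. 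The goal is now to show $H^{-1}(K)=0$ and $H^0(K)=0$.

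The first auxiliary fact I would record is that every object of $\mC^a_k$ is pure of dimension one: the generators $\cO_C(j)$ are pure, and an extension of pure one--dimensional sheaves is again pure (a zero--dimensional subsheaf would meet the sub in $0$ and hence inject into the pure quotient). Granting purity, $H^{-1}(K)=0$ follows quickly. Indeed $H^{-1}(K)\cong\ker\phi$ is a subsheaf of the line bundle $\cO_C(k)$ on $C$, so it is $0$ or $\cO_C(j)$ for some $j\le k$; the image $\image\phi=\cO_C(k)/H^{-1}(K)$ is a subsheaf of the pure sheaf $I'$, hence pure, which rules out $j<k$ (that quotient would be nonzero and zero--dimensional). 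The remaining possibility $H^{-1}(K)=\cO_C(k)$ gives $\phi=0$, whence $I'\cong H^0(K)\in \T_{\nu_a,k}\cap\mC^a_k$; since an object of $\mC^a_k$ lying in $\T_{\nu_a,k}$ has no nonzero endomorphisms it is $0$, so $I=0$ and $f=0$, contradicting $f\neq0$. Thus $H^{-1}(K)=0$, $\phi$ is injective, and $K=H^0(K)=I'/\cO_C(k)\in\T_{\nu_a,k}$.

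It then remains to prove that $I'/\cO_C(k)=0$, for which I would show the stronger statement that $I'/\cO_C(k)\in\mC^a_k$ whenever $\cO_C(k)\hookrightarrow I'$ with $I'\in\mC^a_k$, by induction on the length of a filtration of $I'$ with quotients $\cO_C(j_l)$, $j_l\le k$. Writing $I'_{n-1}\subset I'$ with $I'/I'_{n-1}\cong\cO_C(j_n)$ and composing $\cO_C(k)\hookrightarrow I'\twoheadrightarrow\cO_C(j_n)$: if this composite is nonzero then $\Hom(\cO_C(k),\cO_C(j_n))\neq0$ forces $j_n=k$ and the composite is an isomorphism, so $\cO_C(k)$ splits off and $I'/\cO_C(k)\cong I'_{n-1}\in\mC^a_k$; if the composite is zero then $\cO_C(k)\hookrightarrow I'_{n-1}$ and the induction gives $I'_{n-1}/\cO_C(k)\in\mC^a_k$, so $I'/\cO_C(k)$ is an extension of $\cO_C(j_n)$ by $I'_{n-1}/\cO_C(k)$, both in $\mC^a_k$, and $\mC^a_k$ is extension--closed. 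Hence $K\in\mC^a_k$. Finally $K\in\T_{\nu_a,k}$ gives $\Hom(K,\mC^a_k)=0$, which applied to $\mathrm{id}_K$ yields $K=0$; therefore $f$ is injective.

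The step I expect to be the main obstacle is the inductive claim that $I'/\cO_C(k)$ stays in $\mC^a_k$: the objects of $\mC^a_k$ may be supported on non--reduced thickenings of $C$ (the relevant $\Ext^1$ groups on the K3 surface are not all zero), so one cannot simply invoke the classification of bundles on $\PP^1$, and the argument must be run entirely through the filtration, the elementary computations $\Hom(\cO_C(k),\cO_C(j))=0$ for $j<k$ and $\mathrm{End}(\cO_C(k))=\C$, and the extension--closedness of $\mC^a_k$. The purity statement and the final $\Hom(K,\mC^a_k)=0$ contradiction are then routine.
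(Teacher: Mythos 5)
Your overall strategy matches the paper's (kernel--image factorization in $\B_{\nu_a,k}$, killing $H^{-1}$ of the kernel via the absence of zero-dimensional subsheaves, then showing the cokernel of $\cO_C(k)\hookrightarrow I'$ stays in $\mC^a_k$ and contradicting $\Hom(\ker f,\mC^a_k)=0$), but two of your steps rest on a misreading of the Section~5 definitions, and this creates genuine gaps. In Section~5, $\mC^a_k$ is \emph{not} $\langle\cO_C(j)\,\vert\,j\le k\rangle$ (that is the category $\mC_k$ of Sections~2--3, used for the nef divisor); it is the extension closure of $S^a_k$, the set of subobjects of $\cO_C(k)$ in the \emph{tilted} heart $\Coh^{\nu_a}$. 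Completing such a monomorphism to a short exact sequence in $\Coh^{\nu_a}$ and taking sheaf cohomology shows that an element of $S^a_k$ is exactly a sheaf $E\in\T^0_{\nu_a}$ admitting a sheaf map $E\to\cO_C(k)$ whose kernel lies in $\F^0_{\nu_a}$; nothing forces that kernel to vanish, so elements of $S^a_k$ can have positive rank. Consequently your ``auxiliary fact'' that every object of $\mC^a_k$ is pure of dimension one (with ``generators $\cO_C(j)$''), the dichotomy ``$\ker\phi$ is $0$ or $\cO_C(j)$,'' and the hypothesis of your induction (a filtration of $I'$ with quotients $\cO_C(j_l)$) are all unjustified. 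This is precisely why the paper's filtration argument is phrased with quotients $J_i/J_{i-1}\in S^a_k$ and uses only $\mathrm{End}(\cO_C(k))=\C$ together with the given monomorphisms $J_i/J_{i-1}\hookrightarrow\cO_C(k)$.

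The second gap is the assertion that $H^0(I)=0$ gives $I'\in\F^1_{\nu_a,k}=\mC^a_k$; this conflates the two tilted descriptions of $\B_{\nu_a,k}$. Your long exact sequence is in sheaf cohomology (you need this to say $H^{-1}(K)$ is a subsheaf of $\cO_C(k)$ and $H^0(K)\in\T_{\nu_a,k}$), and for sheaf cohomology the conclusion is only $I'\in\F_{\nu_a,k}=\langle\mC^a_k,\F^0_{\nu_a}\rangle$. (If you instead take cohomology with respect to $\Coh^{\nu_a}$ so that $I'\in\F^1_{\nu_a,k}$ is immediate, then $\ker\phi$ becomes a subobject in $\Coh^{\nu_a}$, i.e.\ an element of $S^a_k$, and the dichotomy fails for the reason above.) Getting from $\F_{\nu_a,k}$ down to $\mC^a_k$ is a real step, and it is where the paper works: writing $0\to I_1\to I'\to I_2\to 0$ with $I_1\in\mC^a_k$, $I_2\in\F^0_{\nu_a}$, the torsion sheaf $\cO_C(k)$ maps into $I_1$, so $I_2$ is a quotient of $\ker f\in\T_{\nu_a,k}\subseteq\T^0_{\nu_a}$, and $\Hom(\T^0_{\nu_a},\F^0_{\nu_a})=0$ forces $I_2=0$. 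Note the logical order: this uses $\ker f\in\T_{\nu_a,k}$, i.e.\ $H^{-1}(\ker f)=0$, which must be proved first; the paper does so using only that objects of $\F_{\nu_a,k}$ have no zero-dimensional subsheaves, a statement that, unlike purity of $\mC^a_k$, survives the correct definition. On the positive side, your splitting induction in the last paragraph is a clean variant of the paper's final step and can be repaired to allow quotients $G_l\in S^a_k$: a nonzero composite $\cO_C(k)\to G_n\hookrightarrow\cO_C(k)$ is an isomorphism, forcing $G_n\cong\cO_C(k)$ and the splitting you want. But as written, the proof has genuine holes at the two places above.
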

\begin{proof}
We argue by contradiction. Assume that $f$ is not injective. Then there is a s.e.s. in $\B_{\nu_a, k}$
\[
0\to \ker(f)\to \cO_C(k)[1]\to \im(f)\to 0.
\]
where $\im(f)[-1]\in \F_{\nu_a, k}$. Hence $\im(f)[-1]$ has no zero dimensional subsheaf.  Taking l.e.s. of cohomology, if $H^{-1}(\ker(f))\neq 0$, we must have $H^{-1}(\ker(f))\simeq \cO_C(k)$. Then $\im(f)[-1]\simeq H^0(\ker(f))\in \T_{\nu_a, k}$, which is a contradiction. So we must have $H^{-1}(\ker(f))=0$, and we have a s.e.s. in $\Coh(X)$
\begin{equation}
\label{eq:5.3ker}
0\to \cO_C(k)\to \im(f)[-1]\to \ker(f)\to 0.
\end{equation}
Since $\im(f)[-1]$ fits into a s.e.s. in $\Coh(X)$
\[
0\to I_1\to \im(f)[-1]\to I_2\to 0,
\]
where $I_1\in \mC^a_{k}$ and $I_2\in \F^0_{\nu_a}$, we must have $\cO_C(k)\hookrightarrow I_1$. Let $Q$ be the cokernel, then $\ker(f)\in\langle Q, I_2\rangle$.  Since $\ker(f)\in \T_{\nu_a, k}$, we have $\Hom(\ker(f), I_2)=0$, hence $I_2=0$ and $\im(f)[-1]\simeq I_1$.
Note that $I_1$ admits a filtration 
\[
0=J_0\subseteq J_1\subseteq J_2\subseteq...\subseteq J_n=I_1
\]
where $J_i/J_{i-1}$'s are subobject of $\cO_C(k)$ in $\Coh^{\nu_a}$. Taking the smallest $i$ such that $\Hom(\cO_C(k), J_i)\neq 0$. Then $\cO_C(k)\simeq J_i/J_{i-1}$, and $\cO_C(k)$ is a direct summand of $J_i$. Then \ref{eq:5.3ker} implies that $\ker(f)\in \mC^a_k$, contradicting $\Hom(\ker(f), \mC^a_k)=0$. Hence $\ker(f)=0$ and $f$ is injective.
\end{proof}
\begin{cor}\label{cor:torsionpairOc(-1)}
The pair $(\T^o_k, \F^o_k):=(\langle\cO_C(k)[1]\rangle, \langle\cO_C(k)[1]\rangle^\perp)$ defines a torsion pair on $\B_{\nu_a, k}$.
\end{cor}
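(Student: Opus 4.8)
The plan is to recognize Corollary~\ref{cor:torsionpairOc(-1)} as an immediate consequence of the preceding proposition, which shows that every nonzero morphism $\cO_C(k)[1]\to E$ in $\B_{\nu_a,k}$ is injective. To establish a torsion pair $(\T^o_k,\F^o_k)$ on the abelian category $\B_{\nu_a,k}$, I must verify the two defining axioms: first, that $\Hom(\T^o_k,\F^o_k)=0$, and second, that every object $E\in\B_{\nu_a,k}$ fits into a short exact sequence $0\to T\to E\to F\to 0$ with $T\in\T^o_k$ and $F\in\F^o_k$. The orthogonality is essentially definitional, since $\F^o_k=\langle\cO_C(k)[1]\rangle^\perp$ is exactly the right-orthogonal to the torsion class $\T^o_k=\langle\cO_C(k)[1]\rangle$; one checks that $\cO_C(k)[1]$ is spherical with $\Ext^1(\cO_C(k),\cO_C(k))=0$, so $\langle\cO_C(k)[1]\rangle$ consists of finite direct sums $\cO_C(k)[1]^{\oplus n}$ and there are no nontrivial extensions to worry about inside $\T^o_k$.

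The substantive content is the existence of the torsion filtration, and here I would mimic exactly the argument already carried out in Proposition~\ref{prop:torsionpair2nef}. Given $E\in\B_{\nu_a,k}$, if $\Hom(\cO_C(k)[1],E)=0$ then $E\in\F^o_k$ and we take $T=0$. Otherwise pick a nonzero morphism $\theta_1\colon\cO_C(k)[1]\to E$; by the preceding proposition $\theta_1$ is \emph{injective} in $\B_{\nu_a,k}$, so it realizes $\cO_C(k)[1]$ as a subobject and yields a short exact sequence $0\to\cO_C(k)[1]\to E\to N_1\to 0$ in $\B_{\nu_a,k}$. If $\Hom(\cO_C(k)[1],N_1)=0$ we are done; if not, repeat with $N_1$ in place of $E$. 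The key quantitative input that makes this terminate is that $\Ext^1(\cO_C(k),\cO_C(k))=0$ (sphericality of $\cO_C(k)$ on the K3 surface), which forces $\dim\Hom(\cO_C(k)[1],N_i)$ to strictly decrease at each stage—identical to the termination argument at the end of Proposition~\ref{prop:torsionpair2nef}. After finitely many steps the cumulative subobject $T$ is an extension of copies of $\cO_C(k)[1]$, hence lies in $\T^o_k$ (using closure under extension), and the final cokernel $F$ satisfies $\Hom(\cO_C(k)[1],F)=0$, i.e.\ $F\in\F^o_k$.

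The main obstacle, and the only place genuine care is needed, is the termination of the iteration, which is precisely why the injectivity proposition is invoked: without knowing each $\theta_i$ is a monomorphism in $\B_{\nu_a,k}$, one could not extract honest short exact sequences, and without the vanishing $\Ext^1(\cO_C(k),\cO_C(k))=0$ the dimension-decrease argument would fail. I would also note that one should confirm the iterated subobject genuinely accumulates into a single subobject $T\hookrightarrow E$ with quotient in $\F^o_k$; since $\B_{\nu_a,k}$ is the heart of a bounded t-structure on $D^b(X)$ and all the relevant $\Hom$ and $\Ext$ groups are finite-dimensional, the Noetherian-type finiteness needed here is available. Given that all of these pieces are already assembled in the excerpt, the proof of the corollary is short: it is the formal deduction that ``every nonzero map from $\cO_C(k)[1]$ is injective, plus sphericality'' yields the torsion-pair decomposition, exactly as in Proposition~\ref{prop:torsionpair2nef}.
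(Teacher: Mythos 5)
Your proof is correct and takes essentially the same route the paper intends: the corollary is a formal consequence of the preceding injectivity proposition, with existence of the torsion filtration obtained by the same peel-off-and-iterate argument as in Proposition~\ref{prop:torsionpair2nef}, terminating because $\Ext^1(\cO_C(k),\cO_C(k))=0$ forces $\dim\Hom(\cO_C(k)[1],N_i)$ to strictly decrease. The orthogonality and the identification of the accumulated subobject as an object of $\langle\cO_C(k)[1]\rangle$ are handled exactly as the paper does implicitly.
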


Let $\B^o_{\nu_a, k}:=\langle\F^o_k, \T^o_k[-1]\rangle$ be the heart obtained by tiling $\B_{\nu_a, k}$ at the torsion pair in Corollary \ref{cor:torsionpairOc(-1)}.
\begin{prop}\label{prop:finjective2}
Let $f: \cO_C(k)\to E$ be a nonzero morphism in $\B^o_{\nu_a, k}$, then $f$ is injective.  
\end{prop}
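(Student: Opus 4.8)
The plan is to avoid the longer cohomological analysis used to prove injectivity of maps out of $\cO_C(k)[1]$ in $\B_{\nu_a,k}$, and instead to exploit the torsion pair $(\T^o_k,\F^o_k)$ just established in Corollary~\ref{cor:torsionpairOc(-1)}, together with the sphericity of $\cO_C(k)$. I would argue by contradiction: suppose $f\colon \cO_C(k)\to E$ is nonzero in $\B^o_{\nu_a,k}$ but not injective. Since $\B^o_{\nu_a,k}$ is the heart of a bounded t-structure, hence abelian, I may form the short exact sequence
\[
0\to \ker f\to \cO_C(k)\to \im f\to 0
\]
in $\B^o_{\nu_a,k}$, where $\ker f\neq 0$ and, because $f\neq 0$ forces $\im f\neq 0$, the subobject $\ker f$ is proper in $\cO_C(k)$.

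The key observation is that $\cO_C(k)$ lies in the torsion-free class of a torsion pair on $\B^o_{\nu_a,k}$. Indeed, by construction $\B^o_{\nu_a,k}=\langle \F^o_k,\T^o_k[-1]\rangle$ is the tilt of $\B_{\nu_a,k}$ at $(\T^o_k,\F^o_k)$, so $(\F^o_k,\T^o_k[-1])$ is a torsion pair on $\B^o_{\nu_a,k}$ whose torsion-free part is $\T^o_k[-1]=\langle\cO_C(k)\rangle$. That $\T^o_k[-1]$ is closed under subobjects in $\B^o_{\nu_a,k}$ is the standard property of the torsion-free class of a tilt, which I would verify by taking the long exact sequence of $\B_{\nu_a,k}$-cohomology of a sub--quotient sequence and reading off the vanishing of the degree-zero term. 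Since $\cO_C(k)\in\T^o_k[-1]$ and $\ker f$ is a subobject, this gives $\ker f\in\T^o_k[-1]=\langle\cO_C(k)\rangle$.

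Finally I would invoke sphericity. Because $\cO_C(k)$ is a spherical object, $\Hom(\cO_C(k),\cO_C(k))=\C$ and $\Ext^1(\cO_C(k),\cO_C(k))=0$, so every extension in $\langle\cO_C(k)\rangle$ splits and this subcategory is equivalent to the category of finite-dimensional $\C$-vector spaces, with $\cO_C(k)$ corresponding to the one-dimensional (simple) object. Consequently the only subobjects of $\cO_C(k)$ lying in $\T^o_k[-1]$ are $0$ and $\cO_C(k)$ itself; equivalently, any monomorphism $\cO_C(k)^{\oplus m}\hookrightarrow\cO_C(k)$ forces $m\le 1$ and, in the case $m=1$, an isomorphism. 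As $\ker f$ is a \emph{proper} subobject, I conclude $\ker f=0$, contradicting $\ker f\neq 0$; hence $f$ is injective. I do not expect a substantive obstacle here: the only points needing care are the identification of the torsion-free class of the tilt and its closure under subobjects, and the vector-space description of $\langle\cO_C(k)\rangle$ from sphericity, both of which are routine. The real work was already carried out in establishing the torsion pair in Corollary~\ref{cor:torsionpairOc(-1)}.
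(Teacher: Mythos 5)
Your proof is correct and is essentially the paper's own argument: the paper likewise proceeds by contradiction, decomposes $\ker(f)$ by the torsion pair $(\F^o_k,\T^o_k[-1])$ on $\B^o_{\nu_a,k}$ coming from Corollary~\ref{cor:torsionpairOc(-1)}, shows the torsion part $K_1\in\F^o_k$ vanishes, and concludes $\ker(f)\in\langle\cO_C(k)\rangle$, a contradiction. The only differences are in presentation: where you kill $K_1$ by the purely formal torsion-pair vanishing $\Hom(\F^o_k,\T^o_k[-1])=0$ (closure of the torsion-free class under subobjects), the paper instead asserts $K_1\in\T_{\nu_a,k}$ and uses $\Hom(K_1,\cO_C(k))=0$; and you spell out, via sphericity and the identification of $\langle\cO_C(k)\rangle$ with finite-dimensional vector spaces, the final contradiction that the paper leaves implicit.
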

\begin{proof}
Assume $f$ is not injective, we have the s.e.s. in $\B^o_{\nu_a, k}$: 
\[
0\to \ker(f)\to \cO_C(k)\to \im(f)\to 0.
\]
Here $\ker(f)$ 
fits into s.e.s' in $\B^o_{\nu_a, k}$:
\[
0\to K_1\to \ker(f)\to K_2\to 0,
\]
where $K_1$ 
is in $\F^o_k$, $K_2$ 
is in $\T^o_k[-1]$. 
By construction $K_1\in \T_{\nu_a, k}$, hence $\Hom(K_1, \cO_C(k))=0$.
This forces $K_1=0$ and $\ker(f)\in \langle \cO_C(k)\rangle$, which leads to a contradiction.
\end{proof}

\begin{cor}\label{cor:torpairOc2}
The pair $(\T^c_{k}, \F^c_{k}):=(\langle\cO_C(k)\rangle, \langle\cO_C(k)\rangle^\perp)$ defines a torsion pair on $\B^o_{\nu_a, k}$.
\end{cor}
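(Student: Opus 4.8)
The plan is to mimic the proof of Proposition~\ref{prop:torsionpair2nef}, replacing the injectivity of maps out of $\cO_C(-1)$ (established there by hand) with Proposition~\ref{prop:finjective2}, and exploiting that $\cO_C(k)$ is a spherical object. First I would record the two cheap inputs. Since $\T^o_k=\langle\cO_C(k)[1]\rangle$, we have $\cO_C(k)\in \T^o_k[-1]\subseteq \B^o_{\nu_a, k}$, so $\T^c_k=\langle\cO_C(k)\rangle$ is genuinely a subcategory of the heart; and $\Hom(\T^c_k,\F^c_k)=0$ holds by the very definition of $\F^c_k$ as $\langle\cO_C(k)\rangle^\perp$. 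Hence the only substantive point is to produce, for every $E\in \B^o_{\nu_a, k}$, a short exact sequence $0\to T\to E\to F\to 0$ in $\B^o_{\nu_a, k}$ with $T\in \T^c_k$ and $F\in \F^c_k$.

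To build this sequence I would peel off copies of $\cO_C(k)$ one at a time. If $\Hom(\cO_C(k),E)=0$ then $E\in \F^c_k$ and there is nothing to prove. Otherwise choose a nonzero map $\theta_1:\cO_C(k)\to E$; by Proposition~\ref{prop:finjective2} it is injective in $\B^o_{\nu_a, k}$, giving a short exact sequence $0\to \cO_C(k)\to E\to N_1\to 0$ in the heart. Applying $\Hom(\cO_C(k),-)$ and using that $\cO_C(k)$ is spherical, so $\Hom(\cO_C(k),\cO_C(k))=\mathbb{C}$ and $\Ext^1(\cO_C(k),\cO_C(k))=0$, the long exact sequence forces $\dim\Hom(\cO_C(k),N_1)=\dim\Hom(\cO_C(k),E)-1$. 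Iterating on $N_1,N_2,\dots$, the dimension strictly drops at each step, so after $m=\dim\Hom(\cO_C(k),E)$ steps we reach $N_m$ with $\Hom(\cO_C(k),N_m)=0$, i.e. $N_m\in \F^c_k$.

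It then remains to identify the kernel $T$ of the composite epimorphism $E\twoheadrightarrow N_m$. By construction $T$ is an iterated extension in $\B^o_{\nu_a, k}$ of $m$ copies of $\cO_C(k)$; since $\Ext^1(\cO_C(k),\cO_C(k))=0$ all of these extensions split, so $T\simeq \cO_C(k)^{\oplus m}\in \T^c_k$. This yields the desired torsion sequence $0\to T\to E\to N_m\to 0$ and completes the verification that $(\T^c_k,\F^c_k)$ is a torsion pair on $\B^o_{\nu_a, k}$.

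I expect the routine part to be the bookkeeping of the long exact sequences, and the only place demanding any care to be the termination argument: finiteness of $\dim\Hom(\cO_C(k),E)$ (automatic over the projective K3) together with the strict decrease supplied by sphericity is what guarantees the process stops, and it is precisely the vanishing $\Ext^1(\cO_C(k),\cO_C(k))=0$ that both drives the decrease and collapses the accumulated torsion object into a direct sum. No genuinely new difficulty arises beyond Proposition~\ref{prop:finjective2}, which has already done the hard work of injectivity.
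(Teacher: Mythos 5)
Your proof is correct and follows essentially the same route the paper intends: the corollary is deduced from Proposition~\ref{prop:finjective2} by running the peeling argument of Proposition~\ref{prop:torsionpair2nef} (injectivity of each nonzero map $\cO_C(k)\to E$, then iterating on cokernels, with termination forced by $\Ext^1(\cO_C(k),\cO_C(k))=0$ and the resulting kernel lying in $\langle\cO_C(k)\rangle$). Your extra observation that the torsion part splits as $\cO_C(k)^{\oplus m}$ is true but not needed, since $\T^c_k$ is already closed under extensions.
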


Let $\A_{\nu_a, k}:=\langle\F^c_k, \T^c_k[-1]\rangle$ be the heart obtained by tilting $\B^o_{\nu_a, k}$ at the torsion pair in Corollary \ref{cor:torpairOc2}.


\begin{thm}
\label{thm:corrheartnua}
Using the above notation, we have $\ST(\B^o_{\nu_a, -1})=\A_{\nu_a, -1}$.
\end{thm}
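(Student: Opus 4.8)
The plan is to reduce the statement to the single identity $\ST(\B_{\nu_a, -1}) = \B^o_{\nu_a, -1}$ and then let the compatibility of tilting with autoequivalences do the rest. Both $\ST(\B^o_{\nu_a,-1})$ and $\A_{\nu_a,-1}$ are hearts of bounded t-structures (the former because $\ST$ is an autoequivalence), and a heart contained in another heart of a bounded t-structure must coincide with it, so it suffices to produce one inclusion. The point of the reduction is that $\ST$ commutes with tilting: applying the exact, fully faithful functor $\ST$ to $\B^o_{\nu_a,-1} = \langle \F^o_{-1}, \T^o_{-1}[-1]\rangle$ yields $\langle \ST(\F^o_{-1}), \ST(\T^o_{-1})[-1]\rangle$. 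Since $\ST(\cO_C(-1)[1]) = \cO_C(-1)$ we get $\ST(\T^o_{-1}) = \langle \cO_C(-1)\rangle = \T^c_{-1}$, and since $\Hom(\cO_C(-1)[1], A) \cong \Hom(\cO_C(-1), \ST(A))$ for every $A$, the identity $\ST(\B_{\nu_a,-1}) = \B^o_{\nu_a,-1}$ upgrades the perpendicularity condition defining $\F^o_{-1}$ to the one defining $\F^c_{-1}$, giving $\ST(\F^o_{-1}) = \F^c_{-1}$. Hence $\ST(\B^o_{\nu_a,-1}) = \langle \F^c_{-1}, \T^c_{-1}[-1]\rangle = \A_{\nu_a,-1}$.

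To prove the auxiliary identity $\ST(\B_{\nu_a,-1}) = \B^o_{\nu_a,-1}$ I would again compare two hearts and establish the inclusion $\ST(\B_{\nu_a,-1}) \subseteq \B^o_{\nu_a,-1}$. Decomposing an object of $\B_{\nu_a,-1}$ along the torsion pair $(\T^o_{-1}, \F^o_{-1})$ and using that hearts are closed under extensions, it is enough to twist the two types of factors. On the torsion factor there is nothing to do: $\ST(\cO_C(-1)[1]) = \cO_C(-1) \in \langle \cO_C(-1)\rangle \subseteq \B^o_{\nu_a,-1}$. The entire content is therefore the claim that $\ST(A) \in \B^o_{\nu_a,-1}$ for every $A \in \F^o_{-1}$, which I would extract from the defining triangle $\RHom(\cO_C(-1),A)\otimes \cO_C(-1) \to A \to \ST(A) \xrightarrow{+1}$.

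The key is to confine $\ST(A)$ to cohomological degrees $-1$ and $0$. Writing $n_i = \dim \Ext^i(\cO_C(-1), A)$, the hypothesis $A \in \F^o_{-1}$ gives $n_{-1} = \dim\Hom(\cO_C(-1)[1], A) = 0$, while Serre duality on the K3 surface together with $A, \cO_C(-1)[1] \in \B_{\nu_a,-1}$ gives $n_2 = \dim\Hom(A, \cO_C(-1)) = \dim\Ext^{-1}_{\B_{\nu_a,-1}}(A, \cO_C(-1)[1]) = 0$. Consequently $\ST(A)$ fits into an exact sequence $0 \to H^{-1}(A) \to H^{-1}(\ST(A)) \to \cO_C(-1)^{n_0} \xrightarrow{\phi} H^0(A) \to H^0(\ST(A)) \to \cO_C(-1)^{n_1} \to 0$. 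Here $\ker\phi$ is a subsheaf of $\cO_C(-1)^{n_0}$, hence a sum of line bundles on $C$ of degree $\leq -1$, so $\ker\phi \in \mC^a_{-1} \subseteq \F_{\nu_a,-1}$ (cf.\ Lemma \ref{lem:Ck}); as $H^{-1}(A) \in \F_{\nu_a,-1}$ this places $H^{-1}(\ST(A)) \in \F_{\nu_a,-1}$, and $\mathrm{coker}\,\phi$ is a quotient of $H^0(A) \in \T_{\nu_a,-1}$. Splitting off the remaining copies of $\cO_C(-1)$ via the triangle $W \to \ST(A) \to \cO_C(-1)^{n_1} \xrightarrow{+1}$, one finds that $W \in \B_{\nu_a,-1}$ and that $\Hom(\cO_C(-1)[1], W) = 0$ (using the $\ST$-adjunction, the sphericity of $\cO_C(-1)$, and the vanishing of $\Ext^{<0}$ inside the heart $\B_{\nu_a,-1}$), whence $W \in \F^o_{-1}$ and $\ST(A) \in \B^o_{\nu_a,-1}$. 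The main obstacle is precisely this reassembly: because $\B^o_{\nu_a,-1}$ lies three tilts away from $\Coh(X)$, it does not suffice to check that the cohomology sheaves of $\ST(A)$ lie in $\F_{\nu_a,-1}$ and $\T_{\nu_a,-1}$; one must also produce the auxiliary object $W$ exhibiting $\ST(A)$ as an extension inside the thrice-tilted heart, and this is the step where twisting an object of $\F^o_{-1}$ genuinely departs from the computation in Theorem \ref{thm:corrheart}.
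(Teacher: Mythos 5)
Your proposal is correct, and it takes a genuinely different route from the paper's proof, so a comparison is worthwhile. The paper attacks the inclusion $\ST(\B^o_{\nu_a,-1})\subseteq\A_{\nu_a,-1}$ head-on: it splits $E\in\B^o_{\nu_a,-1}$ as $0\to E'\to E\to\cO_C(-1)^m\to 0$ with $E'\in\F^o_{-1}$, splits $E'$ once more by the sheaf-level torsion pair into $G\in\T_{\nu_a,-1}$ and $F[1]$ with $F\in\F_{\nu_a,-1}$, and twists these pieces separately, the $F[1]$ case resting on the injectivity argument of Proposition~\ref{prop:finjective2}. You instead isolate the auxiliary identity $\ST(\B_{\nu_a,-1})=\B^o_{\nu_a,-1}$ --- a statement the paper never formulates, though a posteriori it can be recovered from the theorem together with the tilting formalism behind Proposition~\ref{prop:STtorsionpair} --- and derive the theorem from it purely formally, since an exact equivalence carries torsion pairs to torsion pairs and commutes with tilting, giving $\ST(\T^o_{-1})=\T^c_{-1}$ and, once the ambient hearts match, $\ST(\F^o_{-1})=\F^c_{-1}$. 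Your proof of the identity is also uniform where the paper's is case-by-case: for $A\in\F^o_{-1}$ the vanishing $\Hom(\cO_C(-1)[1],A)=0$ is the definition of $\F^o_{-1}$, while $\Ext^2(\cO_C(-1),A)\cong\Hom(A,\cO_C(-1))^\ast=0$ follows from Serre duality plus the vanishing of negative Ext's between objects of the heart $\B_{\nu_a,-1}$; these two vanishings confine $\RHom(\cO_C(-1),A)$ to degrees $0$ and $1$, and then a single long exact sequence together with the auxiliary object $W$ (taken as the fiber of the component $\ST(A)\to\cO_C(-1)^{n_1}$ of the connecting map $\ST(A)\to\RHom(\cO_C(-1),A)\otimes\cO_C(-1)[1]$, so that $H^0(W)=\mathrm{coker}\,\phi$) finishes the argument; this Serre-duality observation is precisely what replaces the paper's separate treatments of $G$ and $F[1]$. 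As for what each route buys: yours yields the sharper structural fact that $\ST$ also matches up the middle hearts, refining diagram \ref{eq:tiltdiag}, and you are right that the reassembly via $W$ is unavoidable since membership in the twice-tilted heart cannot be read off from the cohomology sheaves alone; the paper's route keeps explicit control of the cohomology sheaves of the twisted objects --- information reused in Proposition~\ref{prop:STtorsionpair} --- and runs visibly parallel to Theorem~\ref{thm:corrheart}, at the cost of the extra case analysis. One small imprecision, not a gap: your citation of Lemma~\ref{lem:Ck} for $\ker\phi\in\mC^a_{-1}$ points to the categories $\mC_k$ of Sections 2--4 rather than to $\mC^a_k$; the claim itself is immediate anyway, since a subsheaf of $\cO_C(-1)^{n_0}$ is torsion-free on $C\cong\PP^1$, hence a direct sum of line bundles $\cO_C(j)$ with $j\le -1$, each of which embeds into $\cO_C(-1)$ inside $\Coh^{\nu_a}$ and therefore lies in $\mC^a_{-1}$.
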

\begin{proof}
The proof is similar to the proof of Theorem \ref{thm:corrheart}, we only point out the difference. 
Note that any $E\in \B^o_{\nu_a, -1}$ fits into a s.e.s in $\B^o_{\nu_a, -1}$:
\[
0\to E'\to E\to \cO_C(-1)^m\to 0 
\]
where $E'\in \F^o_{-1}$. Since $\ST(\cO_C(-1))=\cO_C(-1)[-1]\in \A_{\nu_a, -1}$, we only need to consider $E'$.

Then $E'$ fits into a s.e.s. in $\B_{\nu_a, -1}$
\[
0\to F[1]\to E'\to G\to 0
\]
where $F\in \F_{\nu_a, -1}$ and $G\in \T_{\nu_a, -1}$.
 We first consider $G$. 
 Denote dim $\Ext^i(\cO_C(-1), G)$ by $n_i$. Then by the definition of spherical twist and taking l.e.s. of cohomology, we have 
\[
\begin{split}
0\to H^{-1}(\ST(G))\to \cO_C(-1)^{n_0}&\to G\to H^0(\ST(G))\to \cO_C(-1)^{n_1}\to 0,\\
H^1(\ST(G))&\simeq \cO_C(-1)^{n_2}.
\end{split}
\]
Denote the image of $G\to H^0(\ST(G))$ by $G'$.
Since $G\in \T_{\nu_a, -1}$, any quotient of $G$ in $\Coh(X)$ is also in $\T_{\nu_a, -1}$, hence $G'\in \T_{\nu_a, -1}\subseteq \B_{\nu_a, -1}$. Furthermore $\Hom(\cO_C(-1)[1], G')=0$, which implies that $G'\in \F^o_{-1}\subseteq \B^o_{\nu_a, -1}$ and hence $H^{0}(\ST(G))\in \B^o_{\nu_a, -1}$. Since $H^{-1}(\ST(G))$ is a subsheaf of $\cO_{C}(-1)^{n_0}$, we have $H^{-1}(\ST(G))\in \mC^a_{-1}$. 
By definition of $n_0$, we have 
\[
\Hom(\cO_{C}(-1)[1], H^{-1}(\ST(G))[1])=0,
\]
hence $H^{-1}(\ST(G))[1]\in \F^o_{-1}\subseteq\B^o_{\nu_a, -1}$. From the above discussion, we have $\ST(G)^{\leq 0}\in \B^o_{\nu_a, -1}$.

By the same reasoning as in the Theorem \ref{thm:corrheart}, we have 
 $\Hom(\cO_C(-1), \ST(G)^{\leq 0})=0$, and hence $\ST(G)^{\leq 0}\in \F^c_{-1}$. Also, the l.e.s. of cohomology implies that $H^1(\ST(G))[-1]\in \T^c_{-1}[-1]$, hence we have $\ST(G)\in \A_{\nu_a, -1}$.

Next we consider when $F[1]\in \F_{\nu_a, -1}[1]$. The condition $\Hom(\cO_C(-1)[1], E')=0$ implies that $\Hom(\cO_C(-1)[1], F[1])=0$.
The l.e.s. of cohomology becomes
\[
\begin{split}
0\to F\to H^{-1}(\ST(F[1]))&\to \cO_C(-1)^{n_0}\to 0\\
H^0(\ST(F[1]))&\simeq \cO_C(-1)^{n_1}.
\end{split}
\]
Then $H^0(\ST(F[1]))\in\B^o_{\nu_a, -1}$. 
Consider $H^{-1}(\ST(F[1]))$. 
In the exact triangle 
\[
\cO_{C}(-1)^{n_0}\xrightarrow{\iota} F[1]\to H^{-1}(\ST(F[1]))[1]\xrightarrow{+1}
\]
we have $\cO_{C}(-1)^{n_0}$and $F[1]$ are objects in $\B^o_{\nu_a, -1}$. Then the argument of Proposition \ref{prop:finjective2} implies that $\iota$ is injective in $\B^o_{\nu_a, -1}$. This implies that $H^{-1}(\ST(F[1]))[1]\in \B^o_{\nu_a, -1}$.
Next we compute
\[
\Hom(\cO_C(-1), \ST(F[1]))=\Hom(\ST^{-1}(\cO_C(-1)), F[1])=\Hom(\cO_C(-1)[1], F[1])=0.
\]
Hence $\ST(F[1])\in \F^c_{-1}\subseteq \A_{\nu_a, -1}$.

The above discussion showed that $\ST(E')\in \A_{\nu_a, -1}$, hence $\ST(\B^o_{\nu_a, -1})\subseteq \A_{\nu_a, -1}$.
\end{proof}
Theorem \ref{thm:corrheartnua} implies that we have the following diagram demonstrating the relation of the hearts:
\begin{equation}
\label{eq:tiltdiag}
\begin{tikzcd}
\Coh^{\nu_a}\ar{r}{(\T^1_{\nu_a, -1}, \F^1_{\nu_a, -1})} &[6em]\B_{\nu_a, -1}\ar{r}{(\T^o_{-1}, \F^o_{-1})} &[6em]\B^o_{\nu_a, -1}\ar{d}{(\T^c_{-1}, \F^c_{-1})} \\
\ST_{\cO_C(-1)}(\Coh^{\nu_a})\ar{r}{{\STk(\T^1_{\nu_a, -1}, \F^1_{\nu_a, -1})}}&\ST_{\cO_C(-1)}(\B_{\nu_a, -1})\ar{r}{\STk(\T^o_{-1}, \F^o_{-1})} &\A_{\nu_a, -1}=\ST_{\cO_C(-1)}(\B^o_{\nu_a, -1})
\end{tikzcd}
\end{equation}
The arrows illustrates tilting at the corresponding torsion pairs. 
In particular $\ST_{\cO_C(-1)}(\Coh^{\nu_a})$ can be obtained from $\Coh^{\nu_a}$ via a sequence of tilting. To complete the picture, we give an explicit description of torsion pairs for obtaining $\STk(\Coh^{\nu_a})$ from $\Coh^{\nu_a}$, in particular, we define torsion pairs in the row
\begin{equation}
\label{eq:STtorsionpair}
\begin{tikzcd}
\ST_{\cO_C(-1)}(\Coh^{\nu_a})&[6em]\ar{l}{(\T^s_{\nu_a, -1}, \F^s_{\nu_a, -1})}\ST_{\cO_C(-1)}(\B_{\nu_a, -1}) &[6em]\ar{l}{(\T^s_{-1}, \F^s_{-1})}\A_{\nu_a, -1}
\end{tikzcd}
\end{equation}

Let $Z_{V, \zeta}$ be the central charge solving the equation 
\[
Z_{V, \zeta}(\ST(E))=g_0Z_{V, \nu_a}(E)
\]
where 
 \[
 g_0=\begin{pmatrix}
		1 & 0\\ 
		0 & \frac{1}{D_{\nu_a}+1}
	\end{pmatrix}.
 \]
\begin{prop}
\label{prop:STtorsionpair}
Consider the sequence of tilting in \ref{eq:STtorsionpair}.

(i) The torsion pair $(\T^s_{-1}, \F^s_{-1})$ on $\A_{\nu_a, -1}$ is defined by
\[
\begin{split}
\T^s_{-1}&=\{E\in \A_{\nu_a, -1}\vert \Hom(E, \cO_C(-1))=0\}\\
\F^s_{-1}&=\langle\cO_C(-1)\rangle.
\end{split}
\]
(ii) The torsion pair $(\T^s_{\nu_a, -1}, \F^s_{\nu_a, -1})$ on $\ST(\B_{\nu_a, -1})$ is defined by 
\[
\begin{split}
\T^s_{\nu_a, -1}&=\{E\in\ST(\B_{\nu_a, -1})\vert \forall E\twoheadrightarrow F\in \ST(\B_{\nu_a, -1}), \Im Z_{V, \zeta}(F)< 0\}\\
\F^s_{\nu_a, -1}&=(\T^s_{\nu_a, -1})^\perp.
\end{split}
\]
\end{prop}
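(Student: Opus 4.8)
The plan is to deduce both parts from the single structural fact that $\ST=\ST_{\cO_C(-1)}$ restricts to equivalences of abelian categories between the three hearts in the top row of \ref{eq:tiltdiag} and their images in the bottom row. Since $\ST$ is an exact autoequivalence of $D^b(X)$ sending $\B^o_{\nu_a,-1}$ to $\A_{\nu_a,-1}$ (Theorem \ref{thm:corrheartnua}), $\B_{\nu_a,-1}$ to $\ST(\B_{\nu_a,-1})$, and $\Coh^{\nu_a}$ to $\ST(\Coh^{\nu_a})$, it carries torsion pairs to torsion pairs and intertwines tilting. Hence the two torsion pairs in \ref{eq:STtorsionpair} are forced to be the $\ST$-images of the torsion pairs on $\B^o_{\nu_a,-1}$ and $\B_{\nu_a,-1}$ whose tilts reverse $(\T^o_{-1},\F^o_{-1})$ and $(\T^1_{\nu_a,-1},\F^1_{\nu_a,-1})$, respectively. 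In particular each stated pair is automatically a torsion pair and tilting at it produces the asserted heart; the only remaining task is to match the explicit descriptions with these images by computing $\ST$ on generators.

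For (i) I would first observe that $\B_{\nu_a,-1}$ is recovered from $\B^o_{\nu_a,-1}=\langle\F^o_{-1},\T^o_{-1}[-1]\rangle$ by tilting at the torsion pair whose torsion class is $\F^o_{-1}=\langle\cO_C(-1)[1]\rangle^\perp$ and whose torsion-free class is $\T^o_{-1}[-1]=\langle\cO_C(-1)\rangle$ (the $\Hom$-vanishing in $\B^o_{\nu_a,-1}$ identifies these as torsion and torsion-free on the correct sides, and the forward tilt returns $\langle\T^o_{-1},\F^o_{-1}\rangle=\B_{\nu_a,-1}$). Pushing this pair through $\ST$ and using $\ST(\cO_C(-1))=\cO_C(-1)[-1]$, so that $\ST(\cO_C(-1)[1])=\cO_C(-1)$, the torsion-free class becomes $\langle\ST(\cO_C(-1))\rangle$, which is $\F^s_{-1}$, and the torsion class becomes $\ST(\langle\cO_C(-1)[1]\rangle^\perp)=\langle\cO_C(-1)\rangle^\perp=\{E\in\A_{\nu_a,-1}:\Hom(E,\cO_C(-1))=0\}$, which is $\T^s_{-1}$. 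The one point to check is that an equivalence preserves right orthogonals, i.e. $\Hom(E,\cO_C(-1))=\Hom(\ST^{-1}E,\cO_C(-1)[1])$, so the $\Hom$-vanishing description transports correctly.

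For (ii) I would identify $(\T^s_{\nu_a,-1},\F^s_{\nu_a,-1})$ as the $\ST$-image of the torsion pair $(\mC^a_{-1}[1],\T^1_{\nu_a,-1})$ on $\B_{\nu_a,-1}$, whose tilt recovers $\Coh^{\nu_a}$. To convert this into the stated central-charge condition I would use the intertwining relation $Z_{V,\zeta}(\ST(E))=g_0\,Z_{V,\nu_a}(E)$ with $g_0=\begin{pmatrix}1&0\\0&\frac{1}{D_{\nu_a}+1}\end{pmatrix}$, solvable by the $t=-1$ case of Proposition \ref{prop:cencharfor}. Since $D_{\nu_a}>-1$, the factor $\tfrac{1}{D_{\nu_a}+1}$ is positive, so $\Im Z_{V,\zeta}(\ST(E))$ and $\Im Z_{V,\nu_a}(E)$ always share the same sign. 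On $\B_{\nu_a,-1}$ the torsion class $\mC^a_{-1}[1]$ consists exactly of the nonzero objects of strictly negative imaginary charge: any nonzero $F\in\mC^a_{-1}$ is a sheaf supported on $C$ with $\ch_1(F)=nC$, $n\geq 1$, so $\Im Z_{V,\nu_a}(F)=n\,\nu_a\cdot C>0$ and $\Im Z_{V,\nu_a}(F[1])<0$, while its torsion-free complement $\T^1_{\nu_a,-1}\subseteq\Coh^{\nu_a}$ satisfies $\Im Z_{V,\nu_a}\geq 0$. Transporting along $\ST$ and using sign preservation, $\T^s_{\nu_a,-1}=\ST(\mC^a_{-1}[1])$ is exactly the class of objects all of whose quotients in $\ST(\B_{\nu_a,-1})$ have $\Im Z_{V,\zeta}<0$, and $\F^s_{\nu_a,-1}$ is its right orthogonal.

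The \emph{main obstacle} is the converse inclusion in (ii): that every $E\in\ST(\B_{\nu_a,-1})$ all of whose quotients have $\Im Z_{V,\zeta}<0$ really lies in $\ST(\mC^a_{-1}[1])$. I would argue by contradiction from the torsion decomposition $0\to T\to E\to F\to 0$ with $T\in\ST(\mC^a_{-1}[1])$ and $F\in\ST(\T^1_{\nu_a,-1})$: if $F\neq 0$ then $F$ is a quotient of $E$ with $\Im Z_{V,\zeta}(F)=\tfrac{1}{D_{\nu_a}+1}\Im Z_{V,\nu_a}(\ST^{-1}F)\geq 0$, contradicting the hypothesis, whence $F=0$ and $E=T$. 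The delicate ingredient is precisely the strict positivity $\Im Z_{V,\nu_a}(F)>0$ for $0\neq F\in\mC^a_{-1}$ together with the bound $\Im Z_{V,\nu_a}\geq 0$ on $\T^1_{\nu_a,-1}$; these guarantee that no nonzero torsion-free piece can hide on the real axis and that the quotient quantifier genuinely detects the torsion class. Beyond this, the only bookkeeping is to fix the tilting conventions so that the reverse torsion pairs are placed on the correct side; once that is pinned down, the remaining verifications are the routine generator computations indicated above.
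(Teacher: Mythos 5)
Your strategy is the same as the paper's: for (i) you transport the reverse torsion pair on $\B^o_{\nu_a,-1}$ through $\ST$ and evaluate on the generator, and for (ii) you reduce, via the positive factor $\tfrac{1}{D_{\nu_a}+1}$ in the intertwining relation, to showing that the torsion class $\F^1_{\nu_a,-1}[1]=\mC^a_{-1}[1]$ of $\B_{\nu_a,-1}$ is cut out by the condition that \emph{all} quotients have negative imaginary charge, and you prove both inclusions from the torsion decomposition exactly as the paper does. However, there is a genuine gap in your justification of the key positivity input in (ii). You assert that every nonzero $F\in\mC^a_{-1}$ is a sheaf supported on $C$ with $\ch_1(F)=nC$. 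This confuses the Section~5 category $\mC^a_{-1}$ with the category $\mC_{k}=\langle\cO_C(j)\mid j\leq k\rangle$ used in the nef-divisor sections: in Section~5, $\mC^a_{-1}$ is the extension closure of the subobjects of $\cO_C(-1)$ \emph{in the tilted heart} $\Coh^{\nu_a}$, and such subobjects need not be supported on $C$. For instance, the composite $\cO_X(C)\twoheadrightarrow\cO_C(C|_C)\simeq\cO_C(-2)\hookrightarrow\cO_C(-1)$ is a monomorphism in $\Coh^{\nu_a}$, because its sheaf kernel $\cO_X$ lies in $\F^0_{\nu_a}$ and its sheaf cokernel is a point sheaf; hence the rank-one sheaf $\cO_X(C)$ lies in $S^a_{-1}\subseteq\mC^a_{-1}$.

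The inequality you actually need --- $\Im Z_{V,\nu_a}>0$ on nonzero objects of $\mC^a_{-1}$ --- is nevertheless true, and the gap can be repaired: any subobject $E$ of $\cO_C(-1)$ in $\Coh^{\nu_a}$ has $H^{-1}(E)=0$, so $E$ is a sheaf in $\T^0_{\nu_a}$; a nonzero sheaf in $\T^0_{\nu_a}$ has $\nu_a\cdot\ch_1\geq 0$ with equality only if it is zero-dimensional; and a zero-dimensional sheaf admits no nonzero map to the pure sheaf $\cO_C(-1)$, hence no nonzero monomorphism in $\Coh^{\nu_a}$. Positivity then passes to extension closures by additivity of $\ch_1$. (The paper asserts this sign statement without proof, so once patched your argument is on the same footing.) Two smaller slips: your intermediate claim that $\mC^a_{-1}[1]$ ``consists exactly of the nonzero objects of strictly negative imaginary charge'' is false as stated --- $\cO_C(-2)[1]\oplus\cO_p$ with $p\notin C$ has negative imaginary charge but is not in $\mC^a_{-1}[1]$ --- although the quantified-over-all-quotients characterization you subsequently use is the correct one. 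And in (i), careful bookkeeping gives the transported torsion-free class $\langle\ST(\cO_C(-1))\rangle=\langle\cO_C(-1)[-1]\rangle$ and torsion class $\{E\in\A_{\nu_a,-1}\mid\Hom(E,\cO_C(-1)[-1])=0\}$, which is what the paper's own proof derives; the unshifted object $\cO_C(-1)$ in your final formulas (and in the Proposition as printed) does not even lie in $\A_{\nu_a,-1}$, so the shift is not cosmetic --- your chain of equalities silently trades a right orthogonal for a left orthogonal and drops this shift.
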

\begin{proof}
Consider (i), since 
\[
(\ST(\T^o_{-1}), \ST(\F^o_{-1}))=(\langle\cO_C(-1)\rangle, \langle\cO_C(-1)\rangle^\perp),
\]
and $\A_{{\nu_a}, -1}=\langle\ST(\F^o_{-1}), \ST(\T^o_{-1})[-1]\rangle$, we naturally have 
\[
\begin{split}
\F^s_{-1}&=\ST(\T^o_{-1})[-1]=\langle\cO_C(-1)[-1]\rangle,\\
\T^s_{-1}&=\{T\in \A_{{\nu_a}, -1}\vert \Hom(T, \F^s_{-1})=0\}.
\end{split}
\]
Next we consider (ii).
Let $\T_2=\F^1_{\nu_a, -1}[1]$ and $\F_2=\T^1_{\nu_a, k}$. 
Since $Z_{V, \zeta}(\ST(E))=g_0\cdot Z_{V, \nu_a}(E)$, and $D_{\nu_a}>0$, it is enough to show 
\begin{equation}
\label{eq:T2eq}
\T_2=\{E\in \B_{\nu_a, -1}\vert \forall E\twoheadrightarrow F \in \B_{\nu_a, -1}, \Im Z_{V, \nu_a}(F)< 0.\}
\end{equation}
If $E\in \T_2$, for any $E\twoheadrightarrow F$ in $\B_{\nu_a, -1}$, $F$ is also in $\T_2$, i.e. $F\in \F^1_{\nu_a, -1}[1]$. Then $\Im Z_{V, \nu_a}(F)< 0$.
Conversely, if $E\notin \T_2$, then $E$ fits into a s.e.s. in $\B_{\nu_a, -1}$:
\[
0\to E_1\to E\to E_2\to 0,
\]
where $E_1\in \F^1_{\nu_a, -1}[1]$, $E_2\in \T^1_{\nu_a, -1}$ and $E_2\neq 0$. But $\Im Z_{V, \nu_a}(E_2)\geq 0$, hence $E$ is not an element of the RHS of \ref{eq:T2eq}.
\end{proof}
Define $g\in \tilde{\text{GL}}^+(2, \mathbb{R})=(g_0, f)$, where $f$ is determined by the property that for any $0<\phi\leq 1$, $0<f(\phi)\leq 1$.
\begin{cor}
Let $\A^{\nu_a}$ be the abelian category obtain by tilting $\Coh^{\nu_a}$ at the sequence of torsion pairs in the first row of \ref{eq:tiltdiag}, followed by \ref{eq:STtorsionpair}. Then $(Z_{V, \zeta}, \A^{\nu_a})$ defines a Bridgeland stability conditions and 
\[
(Z_{V, \zeta}, \A^{\nu_a})\cdot g=\STk\cdot(Z_{V, \nu_a}, \Coh^{\nu_a}).
\]
\end{cor}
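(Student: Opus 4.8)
The plan is to deduce the corollary from the general formalism of the $\Aut(D^b(X))$- and $\tilde{\mathrm{GL}}^+(2,\R)$-actions on $\Stab(D^b(X))$, using the heart identification supplied by Theorem \ref{thm:corrheartnua} and Proposition \ref{prop:STtorsionpair}, together with the central charge formula. First I would record that, since $\STk$ is an exact autoequivalence, $\sigma := \STk\cdot(Z_{V,\nu_a},\Coh^{\nu_a})$ is again a Bridgeland stability condition; its heart is $\STk(\Coh^{\nu_a})$ and its central charge is $W := Z_{V,\nu_a}\circ\STk^{-1}$.

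Next I would identify $\A^{\nu_a}$ with $\STk(\Coh^{\nu_a})$. Applying $\STk$ to the chain of tilts in the top row of \eqref{eq:tiltdiag} and using that an exact equivalence commutes with tilting produces the chain $\STk(\Coh^{\nu_a})\rightsquigarrow\STk(\B_{\nu_a,-1})\rightsquigarrow\STk(\B^o_{\nu_a,-1})$, and Theorem \ref{thm:corrheartnua} gives $\STk(\B^o_{\nu_a,-1})=\A_{\nu_a,-1}$. Thus the hearts $\STk(\B_{\nu_a,-1})$ and $\STk(\Coh^{\nu_a})$ are reached from $\A_{\nu_a,-1}$ by reversing these two tilts, and Proposition \ref{prop:STtorsionpair} identifies precisely these reverse tilts as the torsion pairs $(\T^s_{-1},\F^s_{-1})$ and $(\T^s_{\nu_a,-1},\F^s_{\nu_a,-1})$ appearing in \eqref{eq:STtorsionpair}. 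Concatenating the top row of \eqref{eq:tiltdiag} with \eqref{eq:STtorsionpair} therefore runs $\Coh^{\nu_a}$ all the way to $\STk(\Coh^{\nu_a})$, so by definition $\A^{\nu_a}=\STk(\Coh^{\nu_a})$, which is exactly the heart of $\sigma$.

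It then remains to relate the central charges. The central charge formula $Z_{V,\zeta}(\STk(E))=g_0 Z_{V,\nu_a}(E)$ gives, after substituting $E=\STk^{-1}(F)$, the identity $Z_{V,\zeta}=g_0\cdot W$ of group homomorphisms to $\C\cong\R^2$. Since $D_{\nu_a}>0$, the matrix $g_0=\begin{pmatrix}1&0\\0&\frac{1}{D_{\nu_a}+1}\end{pmatrix}$ has positive determinant, so $g_0\in\GLtr$ and $g=(g_0,f)$ is a genuine element of $\tilde{\mathrm{GL}}^+(2,\R)$, with $f$ the unique compatible lift preserving $(0,1]$. Acting on $\sigma$ by $g^{-1}$ multiplies its central charge $W$ by $g_0$, yielding $Z_{V,\zeta}$, while the condition on $f$ keeps the heart equal to $\A^{\nu_a}$; since a Bridgeland stability condition is determined by its central charge together with its heart, this shows $\sigma\cdot g^{-1}=(Z_{V,\zeta},\A^{\nu_a})$. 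In particular $(Z_{V,\zeta},\A^{\nu_a})$ is a Bridgeland stability condition, and applying $g$ to both sides recovers $(Z_{V,\zeta},\A^{\nu_a})\cdot g=\sigma=\STk\cdot(Z_{V,\nu_a},\Coh^{\nu_a})$, as claimed.

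The main obstacle I anticipate is bookkeeping rather than a genuine difficulty: one must fix the convention for the right $\tilde{\mathrm{GL}}^+(2,\R)$-action and check that it moves the central charge by $g_0$ (not $g_0^{-1}$) and leaves the heart fixed, and one must invoke the fact that $(Z,\A)$ determines the full slicing so that equality of central charge and heart suffices to identify $\sigma\cdot g^{-1}$ with $(Z_{V,\zeta},\A^{\nu_a})$. Keeping the directions of all the tilts and of the action consistent is the only place where an error could realistically creep in.
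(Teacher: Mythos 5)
Your proposal is correct and follows essentially the same route as the paper: the paper's own proof is precisely "a combination of Theorem \ref{thm:corrheartnua} and Proposition \ref{prop:STtorsionpair}," which is exactly how you identify $\A^{\nu_a}$ with $\STk(\Coh^{\nu_a})$, with the central charge formula and the $\wt{\mathrm{GL}}^+(2,\mathbb{R})$-action bookkeeping supplying the rest. You merely spell out the routine details (the autoequivalence action, the convention that the right action rescales the central charge, and that $f$ fixes $(0,1]$ so the heart is unchanged) that the paper leaves implicit.
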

\begin{proof}
This is a combination of Theorem \ref{thm:corrheartnua} and Proposition \ref{prop:STtorsionpair}.
\end{proof}
\begin{rem}
\label{rk:5.9nonnef}
Let $C$, $\nu$ and $D$ be as in the first paragraph of this Section. 
Take $\nu_a=\nu+aD$ for some $a\in \mathbb{Q}_{>0}$. Assuming $D$ is nef, then $\nu_a$ is ample. 
Then the Bridgeland stability condition $\ST_{\cO_C(-1)}(\sigma_{V, \nu_a})$ is of the form $\sigma_{V, \nu_b}=(Z_{V, \nu_b}, \ST_{\cO_C(-1)}(\Coh^{\nu_a}))$. 
From the computation in Section 4.1, we have $\nu_b=\nu+bD$ where $b=-\frac{a}{a+1}$. Then 
Proposition \ref{prop:STtorsionpair} gives an explicit construction of a Bridgeland stability condition associated to a non-nef divisor $\nu_b$. 
\end{rem}
This suggests that the Bridgeland stability condition $\STk(\sigma_{V, \nu_a})$ may not be in the closure of the geometric chamber of the stability manifold. We verify this by looking at the stability of skyscraper sheaves. The following computation is well known and follows directly from GRR computations. 
\begin{prop}
\label{Lem:imSTOp}
Let $p$ be a point in $X$. 

(i) If $p\notin C$, then $\STt^{-1}(\cO_p)=\cO_p$ and $\STt(\cO_p)=\cO_p$.

(ii) If $p\in C$, then $\STt^{-1}(\cO_p)\simeq E^\bullet$, such that $H^{-1}(E)\simeq \cO_C(t)$, $H^0(E)\simeq \cO_C(t+1)$, and $H^j(E)=0$ for $j\neq 0, -1$.
Also, $\STt(\cO_p)\simeq F^\bullet$, such that $H^{-1}(F)\simeq \cO_C(t-1)$, $H^0(F)\simeq \cO_C(t)$, and $H^j(F)=0$ for $j\neq 0, -1$.

 
\end{prop}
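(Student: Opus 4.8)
The plan is to apply the cone description of the spherical twist recorded in the excerpt, namely $\STt(F)\simeq \Cone\bigl(\RHom(\cO_C(t),F)\otimes \cO_C(t)\to F\bigr)$, together with the standard dual description $\STt^{-1}(F)\simeq \Cone\bigl(F\to \RHom(F,\cO_C(t))^\vee\otimes \cO_C(t)\bigr)[-1]$ of its inverse, and to read off the cohomology sheaves from the long exact sequence attached to the defining triangle. Part (i) is immediate: when $p\notin C$ the sheaves $\cO_C(t)$ and $\cO_p$ have disjoint supports, so every local (hence global) $\Ext$ group between them vanishes, i.e.\ $\RHom(\cO_C(t),\cO_p)=\RHom(\cO_p,\cO_C(t))=0$; both cones then collapse and $\STt^{-1}(\cO_p)=\STt(\cO_p)=\cO_p$.

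For part (ii) with $p\in C$, the first step is to compute $\RHom(\cO_C(t),\cO_p)$. Working in local coordinates in which $C=\{x=0\}$ and resolving $\cO_C$ by $[\cO_X\xrightarrow{x}\cO_X]$, multiplication by $x$ acts as zero on $\cO_p$, so $\Ext^0(\cO_C(t),\cO_p)=\Ext^1(\cO_C(t),\cO_p)=\C$ and all other $\Ext$ groups vanish (Serre duality on the K3 surface gives the same answer and pins down $\Ext^2=0$). Thus $\RHom(\cO_C(t),\cO_p)\otimes \cO_C(t)\simeq \cO_C(t)\oplus \cO_C(t)[-1]$, and the degree-zero part of the evaluation map is the canonical restriction $r:\cO_C(t)\twoheadrightarrow \cO_p$, whose kernel is $\cO_C(t-1)$. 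Taking the long exact sequence of cohomology sheaves of the triangle $\cO_C(t)\oplus \cO_C(t)[-1]\to \cO_p\to \STt(\cO_p)$ and using that $H^0$ of the evaluation is the surjection $r$, I read off $H^{-1}(\STt(\cO_p))=\ker r=\cO_C(t-1)$ and $H^0(\STt(\cO_p))=\cO_C(t)$, with all other cohomology zero; note that the $\Ext^1$-component of the evaluation never enters the sheaf-level sequence, so no extension needs to be resolved in this direction.

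For $\STt^{-1}(\cO_p)$ I would run the mirror computation. Serre duality turns the previous calculation into $\Ext^0(\cO_p,\cO_C(t))=0$ and $\Ext^1(\cO_p,\cO_C(t))=\Ext^2(\cO_p,\cO_C(t))=\C$, so $\RHom(\cO_p,\cO_C(t))^\vee\otimes \cO_C(t)\simeq \cO_C(t)[1]\oplus \cO_C(t)[2]$. The long exact sequence of the triangle defining the inverse twist then yields $H^{-1}(\STt^{-1}(\cO_p))=\cO_C(t)$ together with a short exact sequence $0\to \cO_C(t)\to H^0(\STt^{-1}(\cO_p))\to \cO_p\to 0$. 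Here is the one genuinely non-formal point: I must show this extension is non-split. Its class is exactly the $\Ext^1$-component of the coevaluation map, which is the tautological (hence nonzero) element of $\Ext^1(\cO_p,\cO_C(t))=\C$; since this space is one-dimensional, the middle term is the unique non-trivial extension of $\cO_p$ by $\cO_C(t)$ on $C\cong \PP^1$, namely $\cO_C(t+1)$. This gives $H^0(\STt^{-1}(\cO_p))=\cO_C(t+1)$ and $H^{-1}(\STt^{-1}(\cO_p))=\cO_C(t)$, as claimed.

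The main obstacle is precisely this identification of the extension class in the inverse computation; everything else is bookkeeping in long exact sequences. As a consistency check I would invoke the Grothendieck--Riemann--Roch formula already recorded in the excerpt: for $\cO_p$ the numerical type is $(0,0,1)$ and the coefficient $c-n(t+1)=0$, so $\STt^{-1}(\cO_p)$ and $\STt(\cO_p)$ must both have Mukai vector $(0,0,1)$. This indeed agrees with the difference of the Mukai vectors of $\cO_C(t)$ and $\cO_C(t-1)$ for the forward twist, and of $\cO_C(t+1)$ and $\cO_C(t)$ for the inverse, matching the cohomology sheaves found above.
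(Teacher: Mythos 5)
Your proof is correct, but it takes a genuinely different route from the paper's. The paper never computes $\RHom(\cO_C(t),\cO_p)$ or $\RHom(\cO_p,\cO_C(t))$ at all: instead it first computes $\Ext^\ast(\cO_C(t),\cO_C(t+1))=(\C^2,0,0)$ (via the resolution $\cO_X(-C)\to\cO_X$ and local-to-global), deduces $\STt(\cO_C(t+1))=\cO_C(t-1)[1]$ from the resulting Euler-type sequence $0\to\cO_C(t-1)\to\cO_C(t)^{\oplus 2}\to\cO_C(t+1)\to 0$, recalls $\STt(\cO_C(t))=\cO_C(t)[-1]$, and then simply applies the exact equivalence $\STt^{-1}$ (resp.\ $\STt$) to the sheaf triangle $\cO_C(t-1)\to\cO_C(t)\to\cO_p$ and reads off cohomology. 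The advantage of that route is that the two known images land in cohomological degrees $1$ and $-1$, so the long exact sequence determines $H^{-1}$ and $H^0$ of the image of $\cO_p$ with no extension ambiguity whatsoever; in particular the identification $H^0(\STt^{-1}(\cO_p))\simeq\cO_C(t+1)$ comes for free rather than from a non-splitness argument. Your route, by contrast, works directly with $\cO_p$, which buys a self-contained computation and the explicit extension structure, but at the cost of needing the dual-twist formula $\STt^{-1}(F)\simeq\Cone\bigl(F\to\RHom(F,\cO_C(t))^\vee\otimes\cO_C(t)\bigr)[-1]$ (true, but not recorded in the paper) and of the one delicate step you correctly flag: the claim that the extension class of $0\to\cO_C(t)\to H^0\to\cO_p\to 0$ is exactly the $\Ext^1$-component of the coevaluation. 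That claim is true, but as stated it is an assertion; to close it one should run the octahedral axiom on the composition $\cO_p[-1]\to\cO_C(t)\oplus\cO_C(t)[1]\to\cO_C(t)$ (projection to the degree-$0$ summand), which exhibits $\STt^{-1}(\cO_p)$ as an extension of $\Cone(\alpha)$ by $\cO_C(t)[1]$, where $\alpha$ is precisely that coevaluation component; since $\alpha$ is the identity element of $\Ext^1(\cO_p,\cO_C(t))^\ast\otimes\Ext^1(\cO_p,\cO_C(t))$ it is nonzero, and uniqueness of the nontrivial extension on $C\cong\PP^1$ finishes the argument as you say. Your Mukai-vector consistency check at the end agrees with the paper's formula and is a nice sanity check, though it cannot by itself distinguish the split from the non-split extension.
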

Let $p$ be a point in $X$ such that $p\in C$. Then Proposition \ref{Lem:imSTOp} implies that $\ST^{-1}_{\cO_C(-1)}(\cO_p)$ is not in $\Coh^{\nu_a}[n]$ for any $n$. Hence $\ST^{-1}_{\cO_C(-1)}(\cO_p)$ is not $\sigma_{V, \nu_a}$-semistable, which implies that $\cO_p$ is not  $\ST_{\cO_C(-1)}\cdot\sigma_{V, \nu_a}$-semistable. 

\bigskip
We finish this Section by an observation that the functor $\ST_{\cO_C(-1)}$ is equipped with a moduli interpretation. 
Recall in \cite{tramel2017bridgeland}, the author constructed a moduli space $\mathcal{M}_\tau([\cO_x])$
of $\tau$-stable objects of class $[\cO_x]$, and shows that it is isomorphic to $Y:=X\sqcup_C\PP^1$. Since $Y\setminus \PP^1\simeq X\setminus C$, hence $Y$ is smooth at any point q such that $q\notin \PP^1$. For a point $p\in \PP^1$, Lemma 7.10 in \cite{tramel2017bridgeland} implies that
$T_pY\simeq \Ext^1(E_p, E_p)\simeq \mathbb{C}^2$. Here $E_p$ is the object parametrized by the point $p$. This implies that $Y$ is smooth and hence $X\simeq Y$. Let $\Phi: D^b(Y)\to D^b(X)$ be the Fourier-Mukai transform with kernel given by the universal object $\mathcal{U}$ on $Y\times X$. 
\begin{prop}
We have an equivalence of functors $\Phi=\STk^{-1}$.
    \end{prop}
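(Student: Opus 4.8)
The plan is to show that $\Phi$ and $\STk^{-1}$ have the same Fourier--Mukai kernel. Since $Y\cong X$, both are Fourier--Mukai transforms $D^b(X)\to D^b(X)$: the kernel of $\Phi$ is the universal object $\mathcal U$, while $\STk^{-1}$, being the quasi-inverse of an equivalence of Fourier--Mukai type, is again represented by a kernel. By the uniqueness of Fourier--Mukai kernels \cite{huybrechts2006fourier}, it therefore suffices to identify the two kernels in $D^b(X\times X)$.

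First I would write the kernel of $\STk^{-1}$ explicitly. By Definition \ref{def:ST} the kernel of $\STk$ is $P=\Cone(\cO_C(-1)^\vee\boxtimes\cO_C(-1)\to\cO_\Delta)$. A local computation from the resolution $0\to\cO_X(-C)\to\cO_X\to\cO_C\to 0$ together with $\omega_C=\cO_C(-2)$ and $\omega_X=\cO_X$ gives $\cO_C(-1)^\vee\cong\cO_C(-1)[-1]$, so that $P$ sits in a triangle $\cO_C(-1,-1)[-1]\to\cO_\Delta\to P\xrightarrow{+1}$. Because $\STk$ is an equivalence, the kernel of $\STk^{-1}$ is $P^\vee[2]$ (the factor-swap is harmless here since $\cO_\Delta$ and $\cO_C(-1,-1)$ are symmetric under interchanging the two factors). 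Dualizing the triangle for $P$---using $\cO_\Delta^\vee\cong\cO_\Delta[-2]$, as $\det N_\Delta=\omega_X^{-1}=\cO_X$, and $\cO_C(-1,-1)^\vee\cong\cO_C(-1,-1)[-2]$---and shifting by $[2]$ yields
\[
\cO_C(-1,-1)\to P^\vee[2]\to\cO_\Delta\xrightarrow{+1}\cO_C(-1,-1)[1].
\]

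Next I would compare this with $\mathcal U$. By Proposition 7.9 of \cite{tramel2017bridgeland}, $\mathcal U[1]\cong\Cone(\cO_\Delta\to\cO_C(-1,-1)[1])$, so $\mathcal U$ also fits into a triangle $\cO_C(-1,-1)\to\mathcal U\to\cO_\Delta\xrightarrow{+1}\cO_C(-1,-1)[1]$. Thus both $P^\vee[2]$ and $\mathcal U$ are (shifted) cones of maps $\cO_\Delta\to\cO_C(-1,-1)[1]$, and it remains only to check that these two connecting maps agree up to a nonzero scalar. The key computation is that
\[
\Hom_{X\times X}(\cO_\Delta,\cO_C(-1,-1)[1])\cong\mathbb{C},
\]
which follows from the adjunction $\RHom_{X\times X}(\cO_\Delta,A\boxtimes B)\cong R\Gamma(X,A\Lo B)[-2]$ applied with $A=B=\cO_C(-1)$, using that $\cO_C(-1)\Lo\cO_C(-1)$ has cohomology $\cO_C$ in degree $-1$ and $\cO_C(-2)$ in degree $0$, together with $H^\bullet(\mathbb{P}^1,\cO)$ and $H^\bullet(\mathbb{P}^1,\cO(-2))$. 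Both connecting maps are nonzero---otherwise $\STk^{-1}$ would split off the identity and fail to be an equivalence, and $\mathcal U$ would fail to be a genuine sheaf---so, the Hom-space being one-dimensional, they differ by a unit and have isomorphic cones. Hence $P^\vee[2]\cong\mathcal U$ and $\Phi\cong\STk^{-1}$.

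The main obstacle is precisely this last matching of connecting morphisms: the triangles alone determine the kernels only up to the choice of a class in $\Hom(\cO_\Delta,\cO_C(-1,-1)[1])$, so without the one-dimensionality of this Hom-space one could not conclude that the cones are isomorphic. Everything else---the identification $\cO_C(-1)^\vee\cong\cO_C(-1)[-1]$, the duality $\cO_\Delta^\vee\cong\cO_\Delta[-2]$, and the passage to the inverse kernel $P^\vee[2]$---is routine Grothendieck--Serre duality on the K3 surface.
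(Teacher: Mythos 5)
Your proof follows the same overall route as the paper: reduce to comparing Fourier--Mukai kernels, use $\cO_{\Delta_X}^\vee\simeq\cO_{\Delta_X}[-2]$ and $\cO_{C\times C}(-1,-1)^\vee\simeq\cO_{C\times C}(-1,-1)[-2]$ to dualize the defining triangles, and identify $\mathcal U$ with $P^\vee[2]$, the kernel of $\STk^{-1}$. Two differences are worth noting. First, you establish the dualities by Grothendieck--Serre duality ($\omega_C=\cO_C(-2)$, $\det N_\Delta=\omega_X^{-1}=\cO_X$), whereas the paper derives them from purity of the $\mathcal{E}xt$-sheaves (Prop.\ 1.1.6/1.1.8 of \cite{HL}) plus a Chern character argument; both are fine. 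Second, and more substantively, you explicitly close a step the paper passes over in silence: the dualized triangle only exhibits $P^\vee[2]$ and $\mathcal U$ as cones of \emph{some} maps $\cO_\Delta\to\cO_{C\times C}(-1,-1)[1]$, and one must know these maps agree up to scalar. Your computation $\Hom_{X\times X}(\cO_\Delta,\cO_{C\times C}(-1,-1)[1])\cong\mathbb{C}$ (via $\RHom(\cO_\Delta,A\boxtimes B)\cong R\Gamma(X,A\Lo B)[-2]$ and the cohomology of $\cO_C(-1)\Lo\cO_C(-1)$) is correct and genuinely strengthens the argument.

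One justification in your proof is wrong as stated, though the fact it supports is true. To rule out the splitting of the $\mathcal U$-triangle you say that otherwise ``$\mathcal U$ would fail to be a genuine sheaf.'' But a direct sum of sheaves is a sheaf, so this proves nothing; moreover $\mathcal U$ is \emph{not} a sheaf---its derived restrictions to $\{p\}\times X$ for $p$ in the exceptional $\PP^1$ are two-term complexes (compare Proposition \ref{Lem:imSTOp}). The correct argument: if the connecting map were zero, then $\mathcal U\simeq\cO_{\Delta_X}\oplus\cO_{C\times C}(-1,-1)$, whose derived restriction to $\{p\}\times X$ for $p\in\PP^1$ is $\cO_p\oplus\cO_C(-1)\oplus\cO_C(-1)[1]$, a decomposable object; this contradicts the fact that the fibers of the universal object are $\tau$-stable, hence simple, hence indecomposable. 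A parallel tightening applies to your claim that the other connecting map is nonzero: if it vanished, $\STk^{-1}$ would send $\cO_p$ ($p\in C$) to $\cO_p\oplus\Phi_{\cO_{C\times C}(-1,-1)}(\cO_p)$ with both summands nonzero, contradicting that an equivalence preserves the one-dimensionality of $\End(\cO_p)$. With these repairs your proof is complete.
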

\begin{proof}
By Proposition 7.9 in \cite{tramel2017bridgeland}, we have
\[
\mathcal{U}[1]\simeq\Cone(\cO_{\Delta_X}\to \cO_{C\times C}(-1, -1)[1]).
\]
Proposition 1.1.6 in \cite{HL} implies that $\mathcal{E}xt^i(\cO_{\Delta_X}, \cO_{X\times X})=0$ for $i=0, 1$, and $\mathcal{E}xt^3(\cO_{\Delta_X}, \cO_{X\times X})$ is supported in dimension $0$. Since $\Ext^3(\cO_{\Delta_X}, \cO_{X\times X})=0$, local to global spectral sequence implies that $\mathcal{E}xt^3(\cO_{\Delta_X}, \cO_{X\times X})=0$. The proof of 1.1.8 in \cite{HL} also implies that $\cO_{\Delta X}^\vee\simeq \mathcal{E}xt^2(\cO_{\Delta_X}, \cO_{X\times X})$ is pure. 
Then the Chern character forces $\cO_{\Delta_X}^\vee\simeq \cO_{\Delta_X}[-2]$.
Similar argument shows that $\cO_C(-1)^\vee\simeq \cO_C(-1)[-1]$, hence we have $\cO_{C\times C}(-1, -1)^\vee\simeq \cO_{C\times C}(-1, -1)[-2]$. 
Hence, we obtain
\[
\begin{split}
\mathcal{U}[-2]&\simeq \Cone(\cO_{\Delta X}^\vee\to \cO_{C\times C}(-1, -1)^\vee[1])[-1]\\
&\simeq \Cone(\cO_{C\times C}(-1, -1)[-1]\to \cO_{\Delta_X})^\vee.
\end{split}
\]
Recall in Definition \ref{def:ST}, $\STk$ is a Fourier-Mukai transform with kernel equals to 
\[
\Cone(\cO_{C\times C}(-1, -1)[-1]\to \cO_{\Delta X}),
\]
hence $\mathcal{U}$ is the kernel of $\STk^{-1}$. Since both functors are Fourier-Mukai transforms, there is an equivalence of functors $\Phi=\STk^{-1}$. In particular this also implies that $\Phi$ is an equivalence. 
\end{proof}

\section{Stability of some line bundles at $\sigma^b_{V, \nu}$}

In this Section,
we study the stability of certain line bundles at $\sigma^b_{V, \nu}$. Let $X$, $\nu$, $C$ be as in Section 4. 

\begin{lem}\label{lem:AG52-122-1}
\label{lem:wallestimate}
Let $X$ be a smooth projective surface,  $\nu$ a nef class on $X$, and $L$ a line bundle on $X$ such that  
$L\in \B_{\nu, -2}$. Then there exists a $V_o$ such that for any $\sigma^b_{V, \nu}$-destabilizing sequence
\begin{equation}\label{eq:destabALQ}
0 \to A \to L \to Q \to 0
\end{equation}
in $\Bc_{\nu, -2}$ where 

(i) $\phi^b_{V, \nu} (A)=\phi^b_{V, \nu}(L)$,

(ii) $A$ is $\sigma^b_{V, \nu}$-semistable,

(iii) $\ch_0(A)\geq 2$,

then $V<V_o$.
\end{lem}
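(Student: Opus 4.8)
The plan is to convert the equal-phase condition (i) into an explicit linear equation for $V$ in the numerical invariants of $A$, and then to bound its solution from above using a Bogomolov--Gieseker inequality. Write $\ell = \ch_1(L)$, so that $\ch(L) = (1,\ell,\tfrac12\ell^2)$, and $\ch(A) = (r,D,s)$ with $r = \ch_0(A)\geq 2$; set $m = \nu\cdot\ell$ and $n = \nu\cdot D$, and recall $Z_{V,\nu}(E) = -\ch_2(E) + V\ch_0(E) + i\,\nu\cdot\ch_1(E)$. First I would observe that $A$ is an honest sheaf: since $\nu\cdot\ell>0$ the line bundle $L$ lies in $\T_{\nu,-2}\subseteq\Coh(X)$, so $H^{-1}(L)=0$, and the long exact cohomology sequence of \eqref{eq:destabALQ} forces $H^{-1}(A)=0$. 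Thus $A = H^0(A)\in\T_{\nu,-2}$ is a sheaf with $\mu_{\nu,\min}(A)>0$; in particular $n = \nu\cdot D>0$ and $A\notin\ker Z_{V,\nu}$ (here condition (iii) guarantees $r\neq 0$, so $A$ is not a multiple of $\cO_C(-1)$). Since $Q=L/A\in\B_{\nu,-2}$ has $\Im Z_{V,\nu}(Q)\geq 0$, additivity of $\Im Z_{V,\nu}$ gives $0<n\leq m$.

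Next I would extract the wall equation. As $A$ and $L$ both have strictly positive imaginary central charge, condition (i) is equivalent to $\Re Z_{V,\nu}(A)\,\Im Z_{V,\nu}(L)=\Re Z_{V,\nu}(L)\,\Im Z_{V,\nu}(A)$, which rearranges to
\[
V\,(rm-n) = s\,m-\tfrac12\ell^2 n.
\]
Here condition (iii) is decisive: the coefficient of $V$ satisfies $rm-n\geq 2m-m=m>0$, so the equation genuinely solves for $V$ and its denominator is bounded below by the positive constant $m$. Consequently it suffices to bound the numerator $sm-\tfrac12\ell^2 n$ from above by a constant depending only on $L$ and $\nu$, i.e.\ to bound $s=\ch_2(A)$ from above.

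The upper bound on $\ch_2(A)$ is the heart of the argument and the step I expect to be the main obstacle. I would use that $A$ is $\sigma^b_{V,\nu}$-semistable (condition (ii)) of phase $\phi^b_{V,\nu}(A)=\phi^b_{V,\nu}(L)<1$, so that $A$ admits no subobject of phase one; this rules out the sheaves $\cO_C(t)$ with $t\geq 0$ as subobjects and, together with $A\in\T_{\nu,-2}$, keeps the torsion of $A$ under control. One then invokes the Bogomolov--Gieseker inequality $\Delta(A)=\ch_1(A)^2-2\ch_0(A)\ch_2(A)\geq 0$, available for $\mu_\nu$-semistable objects by Corollary~4.3 of \cite{CLSY1} (applied to the $\mu_\nu$-Harder--Narasimhan factors of the sheaf $A$, or directly via the support property on the K3 surface), followed by the Hodge index theorem $\ch_1(A)^2\leq (\nu\cdot\ch_1(A))^2/\nu^2$, which uses $\nu^2>0$. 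This yields
\[
s \leq \frac{\ch_1(A)^2}{2r} \leq \frac{(\nu\cdot D)^2}{2r\,\nu^2} = \frac{n^2}{2r\,\nu^2}\leq \frac{m^2}{4\nu^2}.
\]
The care needed here is precisely the bookkeeping for any torsion subsheaf of $A$ supported on curves and the verification that all estimates are uniform in $A$.

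Finally I would combine the pieces. With the bound above, the numerator satisfies $sm-\tfrac12\ell^2 n\leq \tfrac{m^3}{4\nu^2}+\tfrac12|\ell^2|m =: N_0$, while the denominator obeys $rm-n\geq m>0$. Hence
\[
V = \frac{sm-\tfrac12\ell^2 n}{rm-n} \leq \frac{N_0}{m} =: V_o,
\]
a constant depending only on $\ell=c_1(L)$ and $\nu$ and not on the particular destabilizing sequence, which is exactly the assertion. I expect the only genuine difficulty to lie in the third paragraph: pinning down the Bogomolov-type inequality for the $\sigma^b_{V,\nu}$-semistable sheaf $A$ and disposing of the curve-supported torsion, everything else being formal manipulation of the central charge.
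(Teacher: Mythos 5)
Your skeleton (the wall equation $V(rm-n)=sm-\tfrac12\ell^2 n$, positivity $rm-n\geq m$ from (iii), then an a priori upper bound on $s=\ch_2(A)$) is a genuinely different route from the paper's, and the first two paragraphs of your argument are correct. But the step you yourself flag as the main obstacle contains a real gap, not mere bookkeeping: you assert $\Delta(A)=\ch_1(A)^2-2\ch_0(A)\ch_2(A)\geq 0$, justified either by applying the Bogomolov--Gieseker inequality ``to the $\mu_\nu$-HN factors of $A$'' or ``via the support property.'' Neither works. $A$ is only $\sigma^b_{V,\nu}$-semistable, not $\mu_\nu$-semistable, and the discriminant is not superadditive along HN filtrations: already $\cO_X\oplus\cO_X(H)$ has $\Delta=-H^2<0$, i.e.\ $\ch_2>\ch_1^2/(2\ch_0)$, even though both HN factors satisfy BG. So BG for the factors does not yield your inequality $s\leq \ch_1(A)^2/(2r)$, and the factor $1/r$ in your final bound $n^2/(2r\nu^2)\leq m^2/(4\nu^2)$ is precisely what is lost. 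The support-property alternative is equally unavailable: $\sigma^b_{V,\nu}$ is a weak stability condition for which no support property is established in the paper or the literature it cites, and you cannot import one from the nearby Bridgeland conditions $\sigma_{V,\nu_\epsilon}$, since an object semistable at the limit point need not be semistable at nearby stability conditions. A second unresolved point is the torsion you defer: $\T_{\nu,-2}$ contains $\cO_C(t)$ for every $t\geq -1$, with $\ch_2$ arbitrarily large, so membership of $A$ in $\T_{\nu,-2}$ alone does not control the torsion contribution to $\ch_2(A)$; one must use that $A\hookrightarrow L$ in $\B_{\nu,-2}$, so the torsion subsheaf of $A$ lies inside $H^{-1}(Q)\in\langle\F^0_\nu,\mC_{-2}\rangle$ and therefore has $\ch_2\leq 0$.

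Your strategy can be repaired, but by a different estimate than the one you wrote: apply BG and the Hodge index theorem to each positive-rank $\mu_\nu$-HN factor $F_i$ of the torsion-free quotient of $A$, getting $\ch_2(F_i)\leq n_i^2/(2r_i\nu^2)$ with all $n_i=\nu\cdot\ch_1(F_i)>0$ (because $A\in\T^0_\nu$ and torsion classes are closed under quotients); then $r_i\geq 1$ together with $\sum_i n_i^2\leq(\sum_i n_i)^2\leq m^2$ gives $\ch_2$ of the torsion-free quotient at most $m^2/(2\nu^2)$, which combined with $\ch_2(\mathrm{torsion})\leq 0$ bounds $s$ and finishes your computation with a slightly worse constant --- notably without ever invoking hypothesis (ii). The paper avoids both difficulties at once by a different mechanism that does use (ii) essentially: it passes to the quotient $A\twoheadrightarrow F'$ built from the last $\mu_\nu$-HN factor $F$ of $A$ (modified by an object of $\mC_{-2}$ so the surjection holds in $\B_{\nu,-2}$), so semistability gives $\phi^b_{V,\nu}(L)=\phi^b_{V,\nu}(A)\leq\phi^b_{V,\nu}(F')\leq\phi^b_{V,\nu}(F)$, and BG plus Hodge index is applied only to the honestly $\mu_\nu$-semistable sheaf $F$; this converts into a lower bound on $\mu_\nu(F)=\mu_{\nu,\min}(A)$, hence an upper bound on $\ch_0(A)$ that tends to $1$ as $V\to\infty$, contradicting (iii) for $V$ large.
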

\begin{proof}
We abbreviate the notation and denote $\phi^b_{V, \nu}$ by $\phi$. Let $F$ be the last HN-factor of $A$, and we have 
\[
0\to K\to A\to F\to 0
\]
in $\Coh(X)$. If $\Hom(K, \cO_C(-2))=0$, 
then the surjection $A\to F$ in $\Coh(X)$ is also a surjection in $\B_{\nu, -2}$. 
If $\Hom(K, \cO_C(-2))\neq 0$, then $K$ fits into a s.e.s. in $\Coh^\nu$:
\[
0\to K'\to K\to R\to 0,
\]
where $\Hom(K', \cO_C(-2))=0$ and $R\in \mC_{-2}$.
Composing with the inclusion $K\rightarrow A$, we obtain a s.e.s. in $\Coh(X)$
\begin{equation}
\label{eq:HNreplace}
0\to K'\to A\to F'\to 0.
\end{equation}
Then $F'\in \T^0_\nu$, and $\Hom(A, \mC_{-2})=0$ implies that $\Hom(F', \mC_{-2})=0$. Hence $F'\in \T_{\nu, -2}$. Since $K'\in \T_{\nu, -2}$, we have
\ref{eq:HNreplace} is a s.e.s in $\B_{\nu, -2}$.

Then by assumption, we have $\phi(L)=\phi(A)\leq \phi(F')$. Also we have the s.e.s in $\Coh(X)$:
\[
0\to R\to F'\to F\to 0.
\]
Since $R\in \mC_{-2}$, we have $\ch_2(R)\leq 0$, hence $\phi(F')\leq \phi(F)$.
To summarize, we have
\begin{equation}
\label{eq:compphase}
\begin{split}
\frac{\ch_2(L)-V}{\ch_1(L)\cdot\nu}&\leq \frac{\ch_2(F')-V\ch_0(F')}{\ch_1(F')\cdot\nu}\\
&\leq\frac{\ch_2(F)-V\ch_0(F)}{\ch_1(F)\cdot\nu}.
\end{split}
\end{equation}
By Corollary 4.3 in \cite{CLSY1}, we have $F$ satisfies Bogomolov-Gieseker inequality. Since $\nu$ is nef and big, HIT also holds, so we have 
\begin{equation}
\label{eq:BGnef}
  \ch_2(F) \leq \frac{\ch_1(F)^2}{2\ch_0(F)} \leq \frac{1}{2\ch_0(F)} \frac{ (\nu \ch_1(F))^2}{\nu^2}.
\end{equation}
Combining Equation \ref{eq:compphase} and \ref{eq:BGnef}, we have 
\[
\frac{\ch_2(L)-V}{\ch_1(L)\cdot\nu}\mu_\nu(F)\leq \frac{1}{2\nu^2}\mu_\nu(F)^2-V.
\]
Since $\mu_\nu(F)>0$, this quadratic inequality in $\mu_\nu(F)$ implies that 
\[
\mu_\nu(F)\geq\nu^2(\sqrt{(\frac{\ch_2(L)-V}{\ch_1(L)\cdot\nu})^2+\frac{2V}{\nu^2}}+\frac{\ch_2(L)-V}{\ch_1(L)\cdot\nu}).
\]
Let $\alpha:=\ch_1(L)$. Since  $\mu_\nu(A)\geq\mu_\nu(F)$, we have 
\begin{equation}
\label{eq:lem6.6boundrk}
\ch_0(A)\leq\frac{\alpha\cdot\nu}{\nu^2(\sqrt{(\frac{\frac{1}{2}\alpha^2-V}{\alpha\cdot\nu})^2+\frac{2}{\nu^2}V}+\frac{\frac{1}{2}\alpha^2-V}{\alpha\cdot\nu})}
\end{equation}
The limit of the RHS approaches $1$ as $V\to \infty$. 
Then there exists $V_o>0$, such that for $V>V_o$ we have $\ch_0(A)\leq1$. 
\end{proof}

\begin{defn}
\label{def:destwall}
We call a value $V_0$ in the ray $V\cdot\nu$ a destabilizing wall if there exists a destabilizing sequence
\begin{equation*}
0 \to A \to L \to Q \to 0
\end{equation*}
in $\Bc_{\nu, -2}$ such that $A$ satisfies 
(i') $\phi_{V_0, \nu} (A)=\phi_{V_0, \nu}(L)=\phi_{V_0, \nu}(Q)$ and
(ii) in Lemma \ref{lem:AG52-122-1}. 
\end{defn}
Using the same argument of Proposition 3.10 in \cite{LQ}, we have the set of destabilizing walls $W(A, L)$ is locally finite for $\nu\in \NS_{\mathbb{Q}}(X)$.
By the previous lemma, the highest destabilizing wall for higher rank object exists, and we denote it by $V_h$.  
\begin{prop}
\label{prop:Hom(,OC(-1))=0}
Let $L$ be a line bundle on $X$, then for each $E\hookrightarrow L$ in $\B_{\nu, -2}$, there exists $E'\hookrightarrow L$ in $\B_{\nu, -2}$ such that $\Hom(E', \cO_C(-1))=0$ and $\sigma^b_{V, \nu}(E)=\sigma^b_{V, \nu}(E')$. 
\end{prop}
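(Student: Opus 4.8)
The plan is to manufacture $E'$ from $E$ by repeatedly dividing out the copies of $\cO_C(-1)$ onto which $E$ surjects, exploiting the fact that $\cO_C(-1)$ spans $\ker(Z_{V,\nu})\cap\B_{\nu,-2}$ so that neither the central charge nor the phase is disturbed. The one genuine input I would establish first is that $\cO_C(-1)$ is a \emph{simple} object of $\B_{\nu,-2}$, equivalently that every nonzero morphism $E\to\cO_C(-1)$ is an epimorphism in $\B_{\nu,-2}$. Indeed, given an exact sequence $0\to A\to\cO_C(-1)\to B\to 0$ in $\B_{\nu,-2}$, additivity gives $Z_{V,\nu}(A)+Z_{V,\nu}(B)=Z_{V,\nu}(\cO_C(-1))=0$; since $Z_{V,\nu}$ sends objects of $\B_{\nu,-2}$ into $\mathbb{H}\cup\mathbb{R}_{\leq 0}$, comparing imaginary parts (both nonnegative, summing to $0$) and then real parts (both $\leq 0$, summing to $0$) forces $A,B\in\ker(Z_{V,\nu})\cap\B_{\nu,-2}=\langle\cO_C(-1)\rangle$. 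As $\Ext^1(\cO_C(-1),\cO_C(-1))=0$ on the K3 surface, this subcategory is semisimple, so $A\in\{0,\cO_C(-1)\}$; thus $\cO_C(-1)$ is simple and any nonzero map to it is surjective.

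With simplicity in hand, the iteration is immediate. If $\Hom(E,\cO_C(-1))\neq 0$, I would choose a nonzero (hence surjective) map and form the short exact sequence $0\to E_1\to E\to\cO_C(-1)\to 0$ in $\B_{\nu,-2}$. Then $E_1\hookrightarrow E\hookrightarrow L$ is again a subobject of $L$ (a composition of monomorphisms), and because $\cO_C(-1)$ lies in the kernel we get $Z_{V,\nu}(E_1)=Z_{V,\nu}(E)$; in particular $\phi^b_{V,\nu}(E_1)=\phi^b_{V,\nu}(E)$, so $\sigma^b_{V,\nu}(E_1)=\sigma^b_{V,\nu}(E)$. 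Note that $Z_{V,\nu}(E_1)=Z_{V,\nu}(E)\neq 0$ whenever $Z_{V,\nu}(E)\neq 0$, which is automatic for any candidate destabilizer of $L$; this is what keeps all the intermediate phases, and the phase of the final $E'$, well defined.

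For termination I would apply $\Hom(-,\cO_C(-1))$ to the triangle $E_{i+1}\to E_i\to\cO_C(-1)\xrightarrow{+1}$. Using that $\Ext^{<0}$ vanishes between objects of the heart together with $\Ext^1(\cO_C(-1),\cO_C(-1))=0$, the long exact sequence collapses to $0\to\Hom(\cO_C(-1),\cO_C(-1))\to\Hom(E_i,\cO_C(-1))\to\Hom(E_{i+1},\cO_C(-1))\to 0$, so that $\dim\Hom(E_i,\cO_C(-1))$ strictly decreases by one at each step. Hence after finitely many steps the process reaches an object $E'\hookrightarrow L$ with $\Hom(E',\cO_C(-1))=0$, still satisfying $\sigma^b_{V,\nu}(E')=\sigma^b_{V,\nu}(E)$, as required.

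The only step carrying real content is the simplicity of $\cO_C(-1)$ in $\B_{\nu,-2}$; once that forces surjectivity of maps into $\cO_C(-1)$, the remainder is the same kernel-dimension-decrease argument already used in Proposition \ref{prop:torsionpair2nef}, and so is routine. The main thing to be careful about is that all constructions (subobject, kernel, quotient) are taken inside the abelian category $\B_{\nu,-2}$ rather than in $\Coh(X)$, and that the standing hypothesis $Z_{V,\nu}(E)\neq 0$ is exactly what makes the phase comparison $\sigma^b_{V,\nu}(E')=\sigma^b_{V,\nu}(E)$ meaningful throughout the iteration.
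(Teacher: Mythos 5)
Your proof is correct, but its key step takes a genuinely different route from the paper's. The paper stays at the level of sheaves: it first shows that any nonzero map $E\to\cO_C(-1)$ is surjective in $\Coh(X)$ (a proper image would be some $\cO_C(j)$ with $j\leq -2$, contradicting $\Hom(E,\mC_{-2})=0$), and then must verify that the sheaf-theoretic kernel $E_1$ again lies in $\T_{\nu,-2}$ — this is the purpose of the pushout/HN-factor diagram in the paper's proof (for the slope condition), together with the vanishing needed to get $\Hom(E_1,\cO_C(-2))=0$ — so that the short exact sequence of sheaves is also exact in $\B_{\nu,-2}$. You instead work entirely inside the heart: you deduce that $\cO_C(-1)$ is a simple object of $\B_{\nu,-2}$ from the weak stability function property $Z_{V,\nu}(\B_{\nu,-2})\subseteq \HH\cup\RR_{\leq 0}$, the classification $\ker(Z_{V,\nu})\cap\B_{\nu,-2}=\langle\cO_C(-1)\rangle$, and $\Ext^1(\cO_C(-1),\cO_C(-1))=0$; then any nonzero map $E\to\cO_C(-1)$ is epic in $\B_{\nu,-2}$ and its kernel is automatically a subobject of $L$ in the heart, with no membership verification required. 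Your route buys a cleaner argument that never leaves the abelian category, and your termination step (the collapsed long exact sequence showing $\dim\Hom(E_i,\cO_C(-1))$ drops by exactly one) is more explicit than the paper's ``repeating this process''; the paper's route buys a concrete sheaf-level description of $E'$ as a subsheaf of $E$, consistent with the sheaf-theoretic arguments that follow (e.g.\ Proposition \ref{prop:lowbdC}). Two small points you should make explicit: first, in the simplicity argument, ruling out a subobject $A\cong\cO_C(-1)^{\oplus a}$ with $a\geq 2$ needs one line (additivity of $\ch_1$ forces $a+b=1$ with $a,b\geq 0$); second, the hypothesis $Z_{V,\nu}(E)\neq 0$ is automatic for \emph{every} nonzero subobject $E\hookrightarrow L$, not merely for destabilizers, since $Z_{V,\nu}(E)=0$ would force $E\cong\cO_C(-1)^{\oplus k}$, a torsion sheaf admitting no nonzero map — let alone a monomorphism — to the torsion-free line bundle $L$.
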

\begin{proof}
Since $E\in \B_{\nu, -2}$, $\Hom(E, \cO_C(-2))=0$. If $\Hom(E, \cO_C(-1))\neq 0$, any map from $E$ to $\cO_C(-1)$ must be surjective in $\Coh(X)$. We complete to a s.e.s. in $\Coh(X)$:
\begin{equation}
\label{eq:seskerOc(-1)}
0\to E_1\to E\to \cO_C(-1)\to 0.
\end{equation}
Consider the following commutative diagram formed by pullout:
\[
\begin{tikzcd}
    0 \ar{r}&E_1\ar{r}\ar{d} & E\ar{r}\ar{d}& \cO_C(-1)\ar{r}\ar{d} & 0\\
    0 \ar{r}&\HN_n(E_1)\ar{r}& P\ar{r}& R\ar{r} & 0
\end{tikzcd}
\]
 where $\HN_n(E_1)$ is the last HN factor of $E_1$, $P$ is formed by pushout diagram, then $R$ is a quotient sheaf of $\cO_C(-1)$. Since $\T_{\nu, -2}$ is a torsion class in $\Coh(X)$, we have $P\in \T_{\nu, -2}$. Then $\mu_\nu(R)=0$ implies that $\mu_\nu(\HN_n(E_1))>0$.
Since $\Hom(E, \cO_C(-2))=0$, we also have $\Hom(E_1, \cO_C(-2))=0$, hence $E_1\in \T_{\nu, -2}$ and equation \ref{eq:seskerOc(-1)} is also a s.e.s. in $\B_{\nu, -2}$. Repeating this process, we obtain $E'\hookrightarrow L$ in $\B_{\nu, -2}$ and $\Hom(E', \cO_C(-1))=0$. Since $\cO_C(-1)\in \ker Z_{V, \nu}$, we have $\sigma^b_{V, \nu}(E)=\sigma^b_{V, \nu}(E')$.
\end{proof}
\begin{prop}
\label{prop:lowbdC}
Let $L$ be a line bundle on $X$ with $c=0$.
Let $F\hookrightarrow L$ in $\B_{\nu, -2}$, then there exists $0\to E\to L\to Q\to 0$ in $\B_{\nu, -2}$ such that $C\cdot \ch_1(H^0(Q))\geq -2$ and $\sigma^b_{V, \nu}(E)=\sigma^b_{V, \nu}(F)$.
\end{prop}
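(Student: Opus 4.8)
The plan is to work entirely with the image of the subobject inside $L$ and to adjust it by the kernel object $\cO_C(-1)$. First I would record the cohomological shape of the situation: since $L$ is a line bundle lying in $\B_{\nu,-2}$ it sits in $\T_{\nu,-2}$ in degree $0$, so any nonzero $F\hookrightarrow L$ in $\B_{\nu,-2}$ is a sheaf with $H^{-1}(F)=0$, and the cokernel $Q$ has $H^{-1}(Q)=\ker(F\to L)\in\F_{\nu,-2}$ and $H^0(Q)=L/I$, where $I:=\im(F\to L)\subseteq L$. Because $F\neq 0$ forces $I\neq 0$ and $L$ has rank one, $H^0(Q)$ is a torsion sheaf whose first Chern class is the effective support divisor $D_Q=\ch_1(L)-\ch_1(I)$. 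Using $\ch_1(L)\cdot C=0$, the target inequality becomes simply $C\cdot\ch_1(I)\leq 2$, so the whole problem is to enlarge the image $I$ along $C$ while keeping $Z_{V,\nu}$ fixed.

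The mechanism I would use is that $\cO_C(-1)\in\ker Z_{V,\nu}$, so replacing $F$ by a subobject $E\supseteq F$ of $L$ with $E/F\in\langle\cO_C(-1)\rangle$ does not change $Z_{V,\nu}$, and hence preserves $\sigma^b_{V,\nu}$. Such an enlargement corresponds to a subobject $\cO_C(-1)\hookrightarrow Q$ in $\B_{\nu,-2}$ whose image in $H^0(Q)$ is a full copy of $\cO_C(-1)$; passing to its preimage replaces $I$ by $I'$ with $\ch_1(I')=\ch_1(I)+C$, so that $C\cdot\ch_1(I')=C\cdot\ch_1(I)+C^2=C\cdot\ch_1(I)-2$. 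Thus each such step lowers $C\cdot\ch_1(I)$ by $2$; iterating, or equivalently taking a maximal $E$ in the $Z_{V,\nu}$-coset of $F$ (which exists since $\B_{\nu,-2}$ is Noetherian), terminates once $C\cdot\ch_1(I)\leq 2$, i.e. $C\cdot\ch_1(H^0(Q))\geq -2$.

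The geometric input that makes a step available whenever $C\cdot\ch_1(H^0(Q))<-2$ is a restriction computation. Writing $I^{\mathrm{sat}}=L(-D_Q)$ for the saturation (so $I^{\mathrm{sat}}/I$ is zero-dimensional), the graded piece $L(-D_Q+C)/L(-D_Q)\cong L(-D_Q+C)|_C$ is a line bundle on $C\cong\PP^1$ of degree $(\ch_1(L)-D_Q+C)\cdot C=-(C\cdot D_Q)-2$, which is $\geq 1$ precisely because $C\cdot\ch_1(H^0(Q))=C\cdot D_Q<-2$. This positive-degree line bundle is a subsheaf of $H^0(Q)=L/I$, up to the zero-dimensional discrepancy $I^{\mathrm{sat}}/I$ which does not affect $\ch_1$, and it contains a copy of $\cO_C(-1)$. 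I would then invoke Proposition \ref{prop:torsionpair2nef}, according to which any nonzero map out of $\cO_C(-1)$ into an object of $\B_{\nu,-2}$ is automatically injective, to turn this into an honest monomorphism in the heart.

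The step I expect to be the main obstacle is promoting the sheaf embedding $\cO_C(-1)\hookrightarrow H^0(Q)$ to a genuine subobject $\cO_C(-1)\hookrightarrow Q$ in $\B_{\nu,-2}$, i.e. lifting it past the negative part $H^{-1}(Q)[1]$. The obstruction lies in $\Ext^1(\cO_C(-1),H^{-1}(Q)[1])\cong\Ext^2(\cO_C(-1),H^{-1}(Q))\cong\Hom(H^{-1}(Q),\cO_C(-1))^{*}$ by Serre duality, and this group need not vanish, since $H^{-1}(Q)\in\F_{\nu,-2}=\langle\F^0_\nu,\mC_{-2}\rangle$ can admit a quotient isomorphic to $\cO_C(-1)$. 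To control it I would first normalize $F$ using Proposition \ref{prop:Hom(,OC(-1))=0} so that $\Hom(F,\cO_C(-1))=0$, and then organize the argument around a maximal $E$ in the coset: at such an $E$ one has $\Hom(\cO_C(-1),Q)=0$, so the long exact sequence of the truncation triangle $H^{-1}(Q)[1]\to Q\to H^0(Q)$ forces an injection $\Hom(\cO_C(-1),H^0(Q))\hookrightarrow\Hom(H^{-1}(Q),\cO_C(-1))^{*}$. The real content, and the delicate point I would expect to require the most work, is to show that this normalization makes the right-hand side vanish; granting that, we get $\Hom(\cO_C(-1),H^0(Q))=0$, and the contrapositive of the restriction computation then yields $C\cdot\ch_1(H^0(Q))\geq -2$, as desired.
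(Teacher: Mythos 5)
Your setup and bookkeeping are sound (the reduction to $C\cdot\ch_1(I)\le 2$, the use of $\cO_C(-1)\in\ker Z_{V,\nu}$ to move within a fixed central-charge value, and the observation that at a maximal $E$ one has $\Hom(\cO_C(-1),Q)=0$), but the proof fails exactly at the step you flag as ``the real content'', and that step is not just delicate --- it is unsupported by your construction. You need $\Hom(H^{-1}(Q),\cO_C(-1))=0$, i.e.\ vanishing of the obstruction $\Ext^1(\cO_C(-1),H^{-1}(Q)[1])\cong\Hom(H^{-1}(Q),\cO_C(-1))^*$, at the maximal $E$. Proposition \ref{prop:Hom(,OC(-1))=0} controls maps out of $F'$, not out of the kernel $K=H^{-1}(Q)=\ker(E\to L)$: in the long exact sequence of $0\to K\to E\to I\to 0$, the group $\Hom(K,\cO_C(-1))$ maps to $\Ext^1(I,\cO_C(-1))$, so it can be nonzero even when $\Hom(E,\cO_C(-1))=0$; and $\F_{\nu,-2}$ does contain sheaves, e.g.\ $\cO_X(-A)$ with $A\cdot\nu\ge 0$ and $A\cdot C=1$, which surject onto $\cO_C(-1)$, so nothing about the class of possible kernels rules this out. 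Worse, your two normalizations conflict: if $E\supsetneq F'$ then $E/F'\simeq\cO_C(-1)^{\oplus k}$ (a direct sum, since $\Ext^1(\cO_C(-1),\cO_C(-1))=0$), and the sequence $0\to\Hom(E/F',\cO_C(-1))\to\Hom(E,\cO_C(-1))\to\Hom(F',\cO_C(-1))=0$ shows $\Hom(E,\cO_C(-1))\neq 0$, so maximality destroys the very normalization you hoped would control the kernel; moreover a heart-injection $\cO_C(-1)\hookrightarrow Q$ can have non-injective composite $\cO_C(-1)\to H^0(Q)$, in which case the kernel grows and your claim that each step changes $\ch_1(I)$ by exactly $C$ also breaks. (A smaller glossed-over point: the subsheaf of $H^0(Q)$ produced by your restriction computation is an extension of $\cO_C(d)$ by the zero-dimensional sheaf $L(-D_Q)/I$, and such an extension need not contain $\cO_C(-1)$; this one is repairable in your contrapositive formulation, since $\Hom(\cO_C(-1),H^0(Q))=0$ already excludes zero-dimensional subsheaves supported on $C$.)

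The paper's proof shows why the difficulty evaporates if you dualize the pairing. After the single normalization of Proposition \ref{prop:Hom(,OC(-1))=0}, it never returns to the heart: it works with $\Ext^1(H^0(Q),\cO_C(-1))$ and the sheaf-level sequence $0\to I\to L\to H^0(Q)\to 0$ in $\Coh(X)$. There the normalization gives exactly what is needed, with no further input: $\Hom(E,\cO_C(-1))=0$ forces $\Hom(I,\cO_C(-1))=0$ (as $E\twoheadrightarrow I$ in $\Coh(X)$), and $\Ext^1(L,\cO_C(-1))=H^1(C,\cO_C(-1))=0$ (this is where $c=0$ enters), so the long exact sequence yields $\Ext^1(H^0(Q),\cO_C(-1))=0$. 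The contradiction then comes from the opposite nonvanishing: writing $\ch_1(H^0(Q))=a_0C+\sum_i a_iC_i$ with $C_i\neq C$, restriction to $C$ produces a quotient $N\in\langle\cO_C(i)\,\vert\,i\ge 0\rangle$ of $H^0(Q)$ built from $a_0$ factors, and a Riemann--Roch/Serre duality computation (via $\Ext^1(\cO_C(i),\cO_C(-1))\simeq\mathbb{C}^i$ and the extension $0\to\cO_C(2)\to N_1\to\cO_C\to 0$) shows $\Ext^1(H^0(Q),\cO_C(-1))\neq 0$ as soon as $a_0\ge 2$; hence $a_0\le 1$ and $C\cdot\ch_1(H^0(Q))\ge -2$. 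To complete your argument you would essentially have to replace your ``maps into $H^0(Q)$'' mechanism by this dual computation; as written, the deferred vanishing is a genuine gap, not a technical loose end.
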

\begin{proof}
By the Proposition \ref{prop:Hom(,OC(-1))=0}, there exists $0\to E\to L\to Q\to 0$ in $\B_{\nu, -2}$ such that $\Hom(E, \cO_C(-1))=0$ and $\sigma^b_{V, \nu}(E)=\sigma^b_{V, \nu}(F)$. Since $H^0(Q)$ is a torsion sheaf, we can write $\ch_1(H^0(Q))=a_0C+\sum a_iC_i$ where $a_i\in \mathbb{Z}_{\geq 0}$ and $C_i$'s are irreducible curves in $X$ which does not equal to $C$. 
Then $L|_C\twoheadrightarrow H^0(Q)|_C$ implies that $H^0(Q)|_C\simeq \cO_C$ or supported in dimension $0$. If $a_0>0$, we have 
\[
0\to K_1\to H^0(Q)\to \cO_C\to 0,
\]
where $L(-C)\twoheadrightarrow K_1$. If nonzero, $K_1|_C\simeq \cO_C(2)$ or is supported on dimension $0$. If $K_1|_C\simeq \cO_C(2)$, denote $\ker(K_1\to \cO_C(2))$ by $K_2$ and repeat this process, we get $K_{a_0-1}|_C\simeq \cO_C(2(a_0-1))$, and $K_{a_0}=0$ or supported in dimension $0$. Then we have $H^0(Q)$ fits into a s.e.s. 
\[
0\to M\to H^0(Q)\to N\to 0,
\]
where $M|_C$ is zero or supported in dimension zero, $N\in \langle\cO_C(i)|i\geq 0\rangle$. Riemann-Roch computation implies that $\Ext^1(\cO_C(i), \cO_C(-1))\simeq \mathbb{C}^{i}$. 

Claim. If $a_0>1$, then $\dim\Ext^1(H^0(Q), \cO_C(-1))\geq 1$. 

Indeed, it is enough to show that $\dim\Ext^1(N, \cO_C(-1))\geq 1$. Hence it is enough if we have $\dim\Ext^1(N_1, \cO_C(-1))\geq 1$, where $N_1$ fits into the s.e.s. in $\Coh(X)$
\[
0\to \cO_C(2)\to N_1\to \cO_C\to 0.
\]
Let $p\in X$ such that $\Hom(N_1, \cO_p)\simeq \mathbb{C}^2$. Apply $\Hom(\_, N_1)$ to the s.e.s. 
\[
0\to \cO_C(-1)\to \cO_C\to \cO_p\to 0. 
\]
Since $\Ext^1(\cO_C, N_1)=0$, we have 
\[
0\to \Ext^1(\cO_C(-1), N_1)\to \Ext^2(\cO_p, N_1)\to \Ext^2(\cO_C, N_1)\to 0.
\]
Hence $\dim\Ext^1(\cO_C(-1), N_1)=1$.

Denoting the image of $E\to L$ by $L_n(-C')$. Then $\Hom(E, \cO_C(-1))=0$ implies the vanishing $\Hom(L_n(-C'), \cO_C(-1))=0$ and hence $\Ext^1(H^0(Q), \cO_C(-1))=0$. This leads to a contradiction, hence $a_0\leq 1$. 
\end{proof}

\begin{cor} 
\label{prop:C.C'>1}
Let $X$ be a smooth K3 surfaces, $C$ a $(-2)$ curve on $X$, and $L$ be a line bundle on $X$ such that $c=0$. 
Assume that for any irreducible curve $C''\subseteq X$, $C''\neq C$, we have $C''\cdot C> 2$. 
Then $L$ is $\sigma^b_{V, \nu}$-semistable for all $V>0$.
\end{cor}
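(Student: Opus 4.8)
The plan is to argue by contradiction, assuming $L$ is not $\sigma^b_{V,\nu}$-semistable for some $V>0$ and deriving a contradiction at the highest destabilizing wall. By the local finiteness of destabilizing walls and Lemma~\ref{lem:AG52-122-1}, which confines every wall produced by a subobject of rank $\ge 2$ to the bounded range $V<V_o$, there is a highest destabilizing wall $V_0>0$. Fix a destabilizing sequence $0\to E\to L\to Q\to 0$ in $\B_{\nu,-2}$ with $E$ semistable and $\phi^b_{V_0,\nu}(E)=\phi^b_{V_0,\nu}(L)$. Since Proposition~\ref{prop:lowbdC} (which builds on Proposition~\ref{prop:Hom(,OC(-1))=0}) replaces any subobject by one of the same phase satisfying $\Hom(E,\cO_C(-1))=0$ and $C\cdot\ch_1(H^0(Q))\ge -2$, I may assume these hold; writing $\ch_1(H^0(Q))=a_0C+\sum_i a_iC_i$ with the $C_i\ne C$ irreducible and $a_i\in\ZZ_{\ge 0}$, the proof of Proposition~\ref{prop:lowbdC} moreover gives $a_0\le 1$.

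The key step is to use the hypothesis $C''\cdot C>2$ to kill all the curve classes $C_i$. As in the proof of Proposition~\ref{prop:lowbdC}, $\Hom(E,\cO_C(-1))=0$ forces $\Ext^1(H^0(Q),\cO_C(-1))=0$. On the K3 surface $X$, Hirzebruch--Riemann--Roch gives
\[
\chi\big(H^0(Q),\cO_C(-1)\big)=-\,C\cdot\ch_1(H^0(Q)),
\]
the zero-dimensional part of $H^0(Q)$ making no contribution. With $\Ext^1=0$ this Euler characteristic equals $\dim\Hom+\dim\Ext^2\ge 0$, so $C\cdot\ch_1(H^0(Q))\le 0$, i.e.\ $-2a_0+\sum_i a_i(C_i\cdot C)\le 0$. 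Because each $C_i\ne C$ satisfies $C_i\cdot C\ge 3$ while $a_0\le 1$, this is only possible if every $a_i=0$. Hence $\ch_1(H^0(Q))=a_0C$ with $a_0\in\{0,1\}$, and $H^0(Q)$ is supported on $C$ together with a zero-dimensional sheaf.

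To finish I would first reduce to $H^{-1}(Q)=0$: since $V_0$ is the highest wall, Lemma~\ref{lem:AG52-122-1} allows one to take $\ch_0(E)\le 1$, and as $\ch_0(H^0(Q))=0$ this gives $\ch_0(H^{-1}(Q))=\ch_0(E)-1\le 0$; torsion-freeness of $H^{-1}(Q)\in\F_{\nu,-2}$ then forces $H^{-1}(Q)=0$. Thus $Q=H^0(Q)=L/E$ is a torsion sheaf with $\ch_1(Q)=a_0C$, whence $\Im Z_{V,\nu}(Q)=\nu\cdot\ch_1(Q)=0$ and $Z_{V,\nu}(Q)=-\ch_2(Q)\in\RR$. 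The hypothesis $c=0$ gives $L|_C\simeq\cO_C$, so the surjection $L\twoheadrightarrow Q$ restricts to a surjection $\cO_C\twoheadrightarrow Q|_C$; as a rank-one quotient of $\cO_C$ on $\PP^1\cong C$ must be $\cO_C$ itself, the one-dimensional part of $Q$ is $\cO_C$ and hence $\ch_2(Q)\ge 1>0$ whenever $Q\ne 0$. Therefore $Z_{V,\nu}(Q)\in\RR_{<0}$, so $\phi^b_{V,\nu}(Q)=1\ge\phi^b_{V,\nu}(L)$, contradicting that $E$ destabilizes $L$. The main obstacle is exactly this reduction to $H^{-1}(Q)=0$: ruling out higher-rank ($\ch_0(E)\ge 2$) maximal destabilizers at the highest wall for \emph{all} $V>0$, since Lemma~\ref{lem:AG52-122-1} only bounds their walls to $V<V_o$, so one must argue---via the nestedness of walls and a Bogomolov--Gieseker estimate on $H^{-1}(Q)$---that the highest wall is realized by a subobject of rank $\le 1$ before the phase-one argument applies.
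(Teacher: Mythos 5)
Your reduction steps are sound and essentially parallel the paper's: Proposition~\ref{prop:Hom(,OC(-1))=0} and the argument inside Proposition~\ref{prop:lowbdC} give $\Hom(E,\cO_C(-1))=0$, hence $\Ext^1(H^0(Q),\cO_C(-1))=0$ and $a_0\le 1$, and the hypothesis $C''\cdot C>2$ then kills all components of $\ch_1(H^0(Q))$ other than $C$. Your Riemann--Roch derivation of $C\cdot\ch_1(H^0(Q))\le 0$ is a valid variant of the paper's more direct route (the paper reads $C'\cdot C\le 0$ off the vanishing $\Hom(L(-C'),\cO_C(-1))=0$ by a degree count on $C\cong\PP^1$), and your phase-one endgame for rank-one subobjects matches the paper's observation that $\phi^b_{V,\nu}(L_n(-C))\le\phi^b_{V,\nu}(L)$ for every $V$.

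The genuine gap is the one you flag yourself: destabilizers with $\ch_0(E)\ge 2$. Your assertion that ``Lemma~\ref{lem:AG52-122-1} allows one to take $\ch_0(E)\le 1$'' at the highest wall is false as stated. That lemma only bounds the walls produced by rank $\ge 2$ semistable subobjects by some $V_o$; it does not say the highest wall is realized by a subobject of rank $\le 1$, and the highest wall could perfectly well sit below $V_o$ and be produced only by a rank-two sheaf. Since the corollary claims semistability for \emph{all} $V>0$, these bounded higher-rank walls are precisely what must be excluded, and no phase-one bookkeeping on torsion quotients touches them. This exclusion is the substantive second half of the paper's proof: one takes the highest higher-rank wall $V_h$, realized by some $F$, shows $F$ must be torsion free (else $F/F_0$ gives a higher wall), embeds $\sigma^b_{V,\nu}$ in the two-parameter family $\sigma^b_{\nu,B}$ with $B=y\nu+zH$ (the hypothesis $c=0$ guarantees $B\cdot C=0$, so the heart stays $\B^y_{\nu,-2}$), and invokes the Arcara--Miles wall analysis of \cite{ARCARA20161655} to conclude that $W(F,L)$ would have to lie below $W(L_n(-C),L)$ for $|z|\gg 0$ --- impossible, because $\phi^b_{V,\nu,B}(L_n(-C))<\phi^b_{V,\nu,B}(L)$ everywhere, i.e.\ that comparison wall is empty. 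Your proposed substitute (``nestedness of walls and a Bogomolov--Gieseker estimate on $H^{-1}(Q)$'') gestures at this circle of ideas but is not an argument, so the proof as written is incomplete. A smaller inaccuracy in your rank-one step: rank-zero objects of $\F_{\nu,-2}$ need not vanish --- they are exactly the objects of $\mC_{-2}$ --- so ``torsion-freeness of $H^{-1}(Q)$'' does not force $H^{-1}(Q)=0$; instead one should note that a nonzero object of $\mC_{-2}$ has negative $\ch_2$ and vanishing $\nu\cdot\ch_1$, so the phase comparison through the image $L_n(-C)$ still goes the right way.
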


\begin{proof}
Assume that $E$ destabilizes $L$ at $\sigma^b_{V, \nu}$ for some $V$. 
Let $L_n(-C')$ be the image of $E\to L$. By Proposition \ref{prop:Hom(,OC(-1))=0}, we can assume that $\Hom(E, \cO_C(-1))=0$, hence $\Hom(L(-C'), \cO_C(-1))=0$. This implies that $C'\cdot C\leq 0$. On the other hand, Proposition \ref{prop:lowbdC} implies that $a_0\leq 1$. Combine these two results, for any irreducible component $C''$ of $C'$, $C''\neq C$, we have $0\leq C''\cdot C\leq 2$. By assumption of the Proposition, such $C''$ does not exist, hence $C'=C$. 

Since $\phi^b_{V, \nu}(L_n(-C))\leq \phi^b_{V, \nu}(L)$ for any $V$, we have rank of $E$ is greater than $1$. By the discussion after Definition \ref{def:destwall}, the highest destabilizing wall for higher rank object exists. Let $F\rightarrow L$ in $\B_{\nu, -2}$ such that $W(F, L)=V_h$. Write $\ch_1(L)=m\nu+n H$, where $H\cdot \nu=0$. Consider the space of Bridgeland/weak stability conditions $\sigma^b_{\nu, B}$, where $B=y\nu+zH$. Note that  $c=0$ implies that $H\cdot C=0$ and $B\cdot C=0$ for any $y, z$. Hence $\sigma^b_{\nu, B}$ has the heart $\B^y_{\nu, -2}$. 
Furthermore, since $W(F, L)=V_h$, 
$F$ must be torsion free, and hence a suboject of $L$ in $\Coh^y$. Indeed, if $F$ is not torsion free, denoting the torsion subsheaf of $F$ by $F_0$, then $F/F_0$ is a subobject of $L$ in $\B^y_{\nu, -2}$, and $W(F/F_0, L)_z$ is higher than $W(F, L)_z$, contradicting $W(F, L)$ is the highest wall at $(y, z)$.
The same argument in \cite{ARCARA20161655} implies that $W(F, L)$ is lower than $W(L_n(-C), L)$ for $|z|\gg 0$, which contradicts $\phi^b_{V, \nu, B}(L_n(-C))<\phi^b_{V, \nu, B}(L)$ for any $y, z$. Hence such $F$ does not exists. 
\end{proof}

The K3 surfaces satisfies the assumption in the Corollary \ref{prop:C.C'>1} exists.
\begin{eg}
Consider the case Theorem 2(v) in \cite{Kov94}. 
Let $\rho(X)=2$. 
Then the effective cone of $X$ is generated by two smooth rational curves of self intersection $-2$. Denoting the two curves by $C_1$ and $C_2$ and $C_1\cdot C_2=q$, then $q>2$. Since otherwise, the first Chern character of any line bundle has non-positive self-intersection, which implies that there is no ample line bundle on $X$, a contradiction. In this case, take $y$ to be an even positive integer, and $\nu=\frac{q}{2}yC_1+yC_2$, then $\nu$ satisfies $\nu\cdot C_1=0$, $\nu\cdot C_2>0$ and $\nu^2>0$. 
\end{eg}
\begin{eg}
Again consider the case Theorem 2(v) in \cite{Kov94}. 
Let $\rho(X)=3$. Assume that the effective cone of $X$ is generated by three curves $C_1$, $C_2$ and $C_3$. Then Corollary 2.9 (i) in \cite{Mor84} implies that there exists K3 surface such that the pairwise intersection between $C_i$'s are greater than $2$. 
\end{eg}

\bibliography{refs}{}
\bibliographystyle{plain}

\end{document}